%%
%% This is LaTeX2e input.
%%

%% The following tells LaTeX that we are using the 
%% style file amsart.cls (That is the AMS article style
%%
\documentclass{amsart}
\usepackage{amsmath}
\usepackage{amssymb}
\usepackage[utf8]{inputenc}
\usepackage{enumerate} 
\usepackage{graphicx}
\usepackage[outdir=./]{epstopdf}
\usepackage{enumitem}

%\usepackage{hyperref}
%%\usepackage{stmaryrd}

%% This has a default type size 10pt.  Other options are 11pt and 12pt
%% This are set by replacing the command above by
%% \documentclass[11pt]{amsart}
%%
%% or
%%
%% \documentclass[12pt]{amsart}
%%

%%
%% Some mathematical symbols are not included in the basic LaTeX
%% package.  Uncommenting the following makes more commands
%% available. 
%%

%\usepackage{amssymb}

%%
%% The following is commands are used for importing various types of
%% graphics.
%% 

%\usepackage{epsfig}  		% For postscript
%\usepackage{epic,eepic}       % For epic and eepic output from xfig

%%
%% The following is very useful in keeping track of labels while
%% writing.  The variant   \usepackage[notcite]{showkeys}
%% does not show the labels on the \cite commands.
%% 

%\usepackageshowkeys}

%%%%
%%%% The next few commands set up the theorem type environments.
%%%% Here they are set up to be numbered section.number, but this can
%%%% be changed.
%%%%

\newtheorem{thm}{Theorem}[section]

\newtheorem{lem}[thm]{Lemma}
\newtheorem{cor}[thm]{Corollary}
\newtheorem{conj}{Conjecture}

%%
%% If some other type is need, say conjectures, then it is constructed
%% by editing and uncommenting the following.
%%

%\newtheorem{conj}[thm]{Conjecture} 

%%% 
%%% The following gives definition type environments (which only differ
%%% from theorem type invironmants in the choices of fonts).  The
%%% numbering is still tied to the theorem counter.
%%% 

\theoremstyle{definition}
\newtheorem{definition}[thm]{Definition}

%%
%% Again more of these can be added by uncommenting and editing the
%% following. 
%%

%\newtheorem{note}[thm]{Note}

%%% 
%%% The following gives remark type environments (which only differ
%%% from theorem type invironmants in the choices of fonts).  The
%%% numbering is still tied to the theorem counter.
%%% 

\theoremstyle{remark}

\newtheorem{remark}[thm]{Remark}

%%%
%%% The following, if uncommented, numbers equations within sections.
%%% 

\numberwithin{equation}{section}

%%%
%%% The following show how to make definition (also called macros or
%%% abbreviations).  For example to use get a bold face R for use to
%%% name the real numbers the command is \mathbf{R}.  To save typing we
%%% can abbreviate as

\newcommand{\R}{\mathbb{R}}  % The real numbers.
\newcommand{\N}{\mathbb{N}}

%%
%% The comment after the defintion is not required, but if you are
%% working with someone they will likely thank you for explaining your
%% definition.  
%%
%% Now add you own definitions:
%%

%%%
%%% Mathematical operators (things like sin and cos which are used as
%%% functions and have slightly different spacing when typeset than
%%% variables are defined as follows:
%%%

 % The distance.

\newcommand{\norm}[1]{\left\lVert#1\right\rVert}

 {\begin{list}{}%
         {\setlength{\leftmargin}{#1}}%
         \item[]%
 }
 {\end{list}}

\newenvironment{claim}[1]{\par\noindent\underline{Claim:}\space#1}{}
\newenvironment{claimproof}[1]{\par\noindent\underline{Proof of Claim:}\space#1}{\hfill $\blacksquare$}

\epstopdfsetup{update}

\begin{document}

\title[A family of non-collapsed steady Ricci Solitons]{A family of non-collapsed steady Ricci Solitons in even dimensions greater or equal to four}
\author{Alexander Appleton}
\address{Department of Mathematics, UC Berkeley, 
CA 94720, USA}
\email{aja44@berkeley.edu}
\maketitle

\begin{abstract}
We construct a family of non-collapsed, non-K\"ahler, non-Einstein steady gradient Ricci solitons in even dimensions greater or equal to four. These solitons are diffeomorphic to the total space of certain complex line bundles over K\"ahler-Einstein manifolds of positive scalar curvature. In four-dimensions this leads to a family of $U(2)$-invariant non-collapsed steady gradient Ricci solitons on the total spaces of the line bundles $O(-k)$, $k \geq 3$, over $\mathbb{C}P^1$. As a byproduct of our methods we also construct Taub-Nut like Ricci solitons and demonstrate a new proof for the existence of the Bryant soliton in the final part of our paper.
\end{abstract}

\tableofcontents

%%%%%%%%%%%%%%%%%%%%%%%%%%%%%%%%%%%%%%%%%%%%%%%%%%%%%%%%%%%%%%%%%%%%%%
\section{Introduction}
%%%%%%%%%%%%%%%%%%%%%%%%%%%%%%%%%%%%%%%%%%%%%%%%%%%%%%%%%%%%%%%%%%%%%%
In this paper we construct new families of non-collapsed and non-K\"ahler steady gradient Ricci solitons in even dimensions greater or equal to four. A Ricci soliton $(M,g)$ is a self-similar solution to the Ricci flow equations 
\begin{equation}
\label{ricciflow}
\partial_t g_{ij} = -2\mathrm{Ric}_{ij} 
\end{equation}
that, up to diffeomorphism, homothetically shrinks, expands, or remains steady under Ricci flow. We will only study steady gradient solitons which satisfy the equation
\begin{equation}
\label{soliton}
\mathrm{Ric}_{ij} + \nabla_i \nabla_j f = 0,
\end{equation}
for a smooth potential function $f: M \rightarrow \R$. Solitons are interesting objects, because they are candidates for blow-up limits of singularities in Ricci Flow. In particular, Type I singularities correspond to shrinking solitons and all Type II singularities known so far are modeled on steady solitons. The new non-collapsed steady solitons we find here are likely to occur as singularity models in Ricci flow. In a separate paper, still in preparation, we have conducted numerical simulations verifying this. 

In three dimensions the classification of non-expanding solitons has largely been carried out and singularity formation is well-understood. But in four dimensions, even though finding non-expanding Ricci solitons is a fundamental problem, to date there are surprisingly few examples known. The last one was discovered by Feldman, Ilmanen, and Knopf \cite{FIK03} --- the FIK shrinker --- also shown to occur as a singularity model by Maximo \cite{M14}. Before this, Cao \cite{Cao96} had constructed a $U(2)$-invariant steady K\"ahler-Ricci soliton. This soliton, however, is collapsed and hence, as shown in Perelman's work \cite{Perl08}, does not appear as a blow-up limit. The rotationally symmetric Bryant soliton \cite{B05} is the last non-collapsed, non-K\"ahler, non-expanding soliton discovered in four dimensions. 

The four dimensional solitons constructed in this paper are asymptotic to the Bryant soliton's quotient by a cyclic group $\mathbb{Z}_k$ of order $k \geq 3$, and their underlying manifold is diffeomorphic to the completion of $\mathbb{R}_{>0} \times S^3/\mathbb{Z}_k$, $k\geq 3$, obtained by adding an $S^2$ at the origin.  Relying on an idea of Page and Pope \cite{PP87}, our methods carry over to complex line bundles over K\"ahler-Einstein manifolds of positive scalar curvature. This allows us to prove the existence of non-collapsed steady solitons on such bundles, supposing their degrees are sufficiently large. In doing so we obtain $(2n+2)$-dimensional solitons on the line bundles $O(-k)$, $k\geq n+1$, over $\mathbb{C}P^n$, which are also asymptotic to a quotient of the $(2n+2)$-dimensional Bryant soliton.

For the metrics considered in this paper the Ricci soliton equation (\ref{soliton}) reduces to a system of ordinary differential equations. Our main result is showing that for a critical choice of boundary conditions the ODE yields a non-collapsed soliton. As an intermediate step we prove the existence of a 1-parameter family of complete \emph{collapsed} solitons. These were independently and by different methods discovered in \cite{Wink17} and \cite{Stol17}. In the final part of the paper we apply our methods to $U(n)$-invariant metrics on $\R^{2n}$, $n\geq2$. This allows us to construct a new 1-parameter family of Taub-Nut like Ricci solitons and also yields an alternative derivation of the Bryant soliton in even dimensions greater or equal to four. 

\subsection{Some background on solitons and Ricci flow singularities}
Solitons are important objects in the study of Ricci flow, because they arise as blow-up limits of singularities. Let $g(t)$, $t\in[0,T)$, be a Ricci flow on a closed manifold $M$ which develops a singularity at time $T < \infty$. Then there exists a point $p \in M$ and a sequence of times $t_i \rightarrow T$ such that
$$K_i=|\mathrm{Rm}_{g(t_i)}|(p) \rightarrow \infty \quad \text{as} \quad t_i \rightarrow T.$$ 
By Perelman's work the sequence of parabolically dilated Ricci flows  
\begin{equation*}
g_i(t):= K_i g\left(t_i + K_i^{-1}t\right), \quad t \in [-K_i t_i, 0],
\end{equation*}
converges in a suitable sense to an ancient solution $(M_{\infty}, g_{\infty}(t)), t\leq 0,$ of the Ricci Flow \cite[Theorem 6.68]{ChI}. This limiting Ricci flow is called the singularity model. Note that the manifold $M_{\infty}$ need not be diffeomorphic to $M$.

It is useful to distinguish between Type I and Type II singularities (see \cite[Section 16]{Ham95}), defined by the rate at which the curvature diverges: 

\begin{align*}
\text{Type I singularity:} \quad & \sup\:(T-t_i)K_i < \infty \\
\text{Type II singularity:} \quad & \sup\: (T-t_i)K_i = \infty \\
\end{align*}
Type I singularities are modeled on shrinking Ricci solitons, as was shown in the work of \cite{N10}, \cite{EMT11}. However, for Type II singularities less is known --- even though steady Ricci solitons are natural candidates and currently all known examples are modeled on them \cite{GZ08}, \cite{AIK11}, \cite{W14}. On the other hand certain solitons cannot occur as singularity models. In particular, only \emph{non-collapsed} solitons can be singularity models due to Perelman's no local collapsing theorem \cite[Section 4]{Perl08}:
\begin{definition}
A Riemannian manifold $(M,g)$ is $\kappa$-non-collapsed below the scale $r>0$ at the point $x$ if $|\mathrm{Rm}(g)| \leq r^{-2}$ for all $y \in B(x,r)$ and 
\begin{equation*}
\frac{\text{Vol} B(x,r)}{r^n} \geq \kappa.
\end{equation*}
A soliton is non-collapsed if for some $\kappa>0$ it is $\kappa$-non-collapsed at all points and scales.
\end{definition}
A-priori the steady Ricci solitons constructed in this paper may arise as singularity models, as they are non-collapsed. Preliminary Ricci flow simulations carried out by the author in collaboration with Jon Wilkening indicate that they in fact do.  

\subsection{Steady solitons in four dimensions}
In four dimensions the topology and geometry of the Ricci solitons constructed are easy to describe: They are diffeomorphic to the complex line bundles $O(-k), k \geq 0,$ over $\mathbb{C}P^1 \cong S^2$. For our purposes it is useful to consider these manifolds as the completion of $\R_{>0} \times S^3/\mathbb{Z}_k$, $k \in \N$, by adding an $S^2$ at the origin. We equip these manifolds with a $U(2)$-invariant metric, which away from the central $S^2$ can be written as a warped-product metric of the form  
\begin{equation}
\label{U2-inv-metric} 
g = ds^2 + g_{a(s),b(s)}.
\end{equation} 
Here $a,b : (0,\infty) \rightarrow \R$ are functions of $s$ and $g_{a(s),b(s)}$ are squashed Berger metrics on the cross-section $S^3/\mathbb{Z}_k$. For metrics of this form the soliton equation (\ref{soliton}) reduces to a system of ordinary differential equations for $a$, $b$ and $f$.
 
Below we describe Berger metrics in more detail. For this, recall the Hopf fibration $\pi: S^3 \rightarrow S^2$, which arises from the multiplicative action of the unitary group
\begin{equation*}
U(1) = \{ e^{i \theta} \; \big | \; \theta \in [0, 2\pi) \} \cong S^1
\end{equation*} on 
\begin{equation*}
S^3 \cong \{ (z_1, z_2) \in \mathbb{C}^2 \; \big |  \; |z_1|^2 + |z_2|^2 = 1 \} \subset \mathbb{C}^2.
\end{equation*}
When $S^3$ and $S^2$ are equipped with the round metrics of curvatures $1$ and $4$, respectively, $U(1)$ acts by isometries and the quotient map $\pi$ is a Riemannian submersion. Thus the round metric of curvature 1 on $S^3$ can be written as
\begin{equation}
\label{warped}
g = \sigma \otimes \sigma + \pi^{\ast}g_{S^2(\frac{1}{2})},
\end{equation}
where the one-form $\sigma$ is dual to the vertical $S^1$-fiber directions and $g_{S^2(\frac{1}{2})}$ is the round metric of curvature 4 on $S^2$. Rescaling the vertical and horizontal directions by factors $a>0$ and $b>0$, respectively, yields the squashed Berger metric
\begin{equation*}
g_{a,b} = a^2 \sigma \otimes \sigma +  b^2 \pi^{\ast}g_{S^2(\frac{1}{2})}
\end{equation*}
on $S^3$, which are also invariant under the $U(1)$-action above. The cross-sectional metrics $g_{a(s),b(s)}, s>0,$ of the warped product metric (\ref{U2-inv-metric}) arise from the quotient of the Berger metric by the cyclical subgroup 
\begin{equation*} 
\mathbb{Z}_k = \{ e^{2\pi i \frac{l}{k}} \; \big | \; l = 0, 1, \cdots, k-1\} \subset U(1).
\end{equation*}  
We extend the metric (\ref{U2-inv-metric}) across the central $S^2$ by taking $a(0) = 0$ and $b(0) > 0$. Geometrically this means that 
\begin{enumerate}
\item the metric pulls back to the round metric of curvature $\frac{b^2}{4}$ on the central $S^2$
\item the $S^1$-fibers of $S^3/\mathbb{Z}_k$ shrink to a point on the central $S^2$ as $s \rightarrow 0$
\end{enumerate}
Because the $S^1$-fibers of $S^3/\mathbb{Z}_k$ are parameterized by $\theta \in [0,\frac{2\pi}{k})$, their circumferences are equal to $\frac{2\pi}{k}a(s)$ and behave like $\frac{2\pi}{k} a'(0)s + O(s^2)$ as $s \rightarrow 0$. Therefore we must require $a'(0) = k$ to avoid a conical singularity at $s=0$. This is how the topology of the manifolds enters the analysis of the Ricci soliton equation.

\vspace{1em}\noindent
Many important metrics are of the form (\ref{U2-inv-metric}), in particular
\begin{itemize}
\item (k = 1): The K\"ahler-Ricci FIK shrinker \cite{FIK03}
\item (k = 1): The Ricci-flat Taub-Bolt metric \cite{P78}
\item (k = 2): The asymptotically locally Euclidean (ALE) Ricci-flat Eguchi-Hanson metric \cite{EH79} 
\end{itemize}
We show that when $k \geq 3$ there exists a non-collapsed steady Ricci soliton:

%For example in the case $k=n+1$ the Ricci flat Eguchi-Hanson metric , or when $k < n+1$ the Ricci flat Taub-Bolt metrics \cite{P78} and the shrinking .
%and its higher dimensional generalizations  \cite{Cal79}, \cite{FG81} \cite{AG03}

\begin{thm}[4d non-collapsed steady Ricci solitons]
\label{cor:4d-non-collapsed-solitons}
When $k \geq 3$ there exists a complete non-collapsed steady gradient Ricci soliton on the completion of $\R_{>0} \times S^3/\mathbb{Z}_k$ --- by adding in an $S^2$ at the origin --- equipped with a U(2)-invariant metric of the form (\ref{U2-inv-metric}). These solitons are diffeomorphic to the total space of the complex line bundle $O(-k)$ over $\mathbb{C}P^1$. Moreover, they satisfy
$$a \sim b \sim C \sqrt{s} \quad \text{as} \quad s\rightarrow \infty$$ 
for $C>0$ a constant, and are therefore asymptotic to the quotient of the 4d Bryant soliton \cite{B05} by $\mathbb{Z}_k$.
\end{thm}

\subsection{Steady solitons on line bundles over $\mathbb{C}P^n$}
We also construct $2n+2$, $n \geq 2$, dimensional steady Ricci solitons within the family of $U(n+1)$-invariant metrics on the total space of the complex 
line bundles $O(-k), k\in\N,$ over $\mathbb{C}P^n, n \in \N$. These spaces can be viewed as warped product
metrics of the form
\begin{equation}
\label{warped_hd}
g = ds^2 + g_{a(s),b(s)} = ds^2 + a(s)^2 \sigma \otimes \sigma + b(s)^2 \pi^{\ast}g_{\mathbb{C}P^n}.
\end{equation}
on the dense subset $\R_{>0} \times S^{2n+1}/\mathbb{Z}_k \subset O(-k), n, k \in \N$, which are smoothly completed across the central $\mathbb{C}P^n$ at the origin. Here the metric $g_{a(s),b(s)}$ is defined analogously to the 4d case via the Hopf fibration $\pi: S^{2n+1} \rightarrow \mathbb{C}P^n$ and $g_{\mathbb{C}P^n}$ is the Fubini-Study metric. Because of the $U(n+1)$-symmetry, the Ricci soliton equation (\ref{soliton})
reduces, up to changes in multiplicative constants, to the same system of linear differential equations as in the four dimensional case above.
This allows us to generalize Theorem \ref{cor:4d-non-collapsed-solitons} to 

\begin{thm}[non-collapsed steady Ricci solitons on line bundles over $\mathbb{C}P^n$]
\label{cor:CPn-solitons}
When $k > n+1$ there exists a complete non-collapsed steady gradient Ricci soliton on the completion of $\R_{>0} \times S^{2n+1}/\mathbb{Z}_k$ --- by adding in an $\mathbb{C}P^n$ at the origin --- equipped with a $U(n+1)$-invariant metric of the form (\ref{warped_hd}). These solitons are diffeomorphic to the total space of the complex line bundle $O(-k)$ over $\mathbb{C}P^n$. Moreover, they satisfy
$$a \sim b \sim C \sqrt{s} \quad \text{as} \quad s\rightarrow \infty$$ 
for $C>0$ a constant, and are therefore asymptotic to the quotient of the $(2n+2)$-dimensional Bryant soliton \cite{B05} by $\mathbb{Z}_k$.
\end{thm}

\subsection{Steady solitons on line bundles over K\"ahler-Einstein manifolds}
Relying on ideas developed in \cite{BB85} and \cite{PP87}, our methods generalize further to a class of metrics on complex line bundles over K\"ahler-Einstein manifolds of positive scalar curvature. We describe these manifolds here: Let $M$ denote the total space of a complex line bundle over a K\"ahler-Einstein manifold $(\hat{M},J,\hat{g})$ of positive scalar curvature. Let $\omega$ and $\rho$ be the K\"ahler and Ricci forms, respectively, and assume that the metric $\hat{g}$ is scaled such that $\rho = 2(n+1)\omega$. Then $\frac{\rho}{2\pi} \in H^2(\hat{M},\mathbb{Z})$ is the Chern class of the canonical line bundle over $\hat{M}$ and therefore integral. Thus there exists a $p=p(\hat{M}, \omega) \in \N$ such that $\frac{\rho}{2\pi} = p \sigma$, where 
$\sigma$ generates the cohomology group $H^2(\hat{M},\mathbb{Z})$. Denote by $L_{k}$ the total space of the complex line bundle with Chern class equal to $k \sigma, k \in \N,$ over $\hat{M}$. We equip $L_k$ with a metric of the form
\begin{equation}
\label{metric}
g = ds^2 + a(s)^2 \left(d\tau - 2A\right)^2 + b(s)^2 \hat{g},
\end{equation}
where 
\begin{itemize}
\item $A$ is a connection 1-form satisfying $dA = \omega$ on $\hat{M}$ 
\item $\tau \in [0, 2\pi)$ is an angular coordinate of the $S^1$ subbundle of $L_{k}$ 
\item $s$ is the radial coordinate. 
\end{itemize}

\begin{remark}
In the case of $\hat{M} = \mathbb{C}P^n$ equipped with the Fubini-Study metric, the spaces $L_k$ are diffeomorphic to the line complex bundles $O(-k)$ over
$\mathbb{C}P^n$. in addition to this the warped product metrics (\ref{warped}) can be written in the form (\ref{metric}), where the functions $a(s)$ and $b(s)$ retain same geometrical interpretation and the 1-form $d\tau -2 A$ corresponds to $\sigma$.
\end{remark}

We prove that when $k > p(\hat{M}, \omega)$ there exists a steady gradient Ricci soliton on $L_k$, yielding the following theorem:
\begin{thm}
\label{main-thm}
Let $(\hat{M}, J, \hat{g})$ be a K\"ahler-Einstein manifold of positive scalar curvature. Then there exists a non-collapsed steady gradient Ricci soliton on $L_{k}$ when $k > p(\hat{M}, \omega)$. For these solitons $a \sim b \sim C \sqrt{s}$ as $s\rightarrow \infty$, where $C>0$ a constant.
\end{thm}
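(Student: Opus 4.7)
The plan is to reduce the steady soliton system (\ref{soliton}) for metrics of the form (\ref{metric}) to a system of ODEs in the functions $a(s), b(s), f(s)$ (with $f=f(s)$), and then run a shooting argument in the initial parameter $b_0 := b(0)$ to produce a non-collapsed solution as a critical value separating two distinct qualitative behaviors.

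\textbf{Step 1: ODE reduction and initial data.} First I would compute $\mathrm{Ric}$ and $\nabla^2 f$ for the cohomogeneity-one metric (\ref{metric}), yielding a closed ODE system for $(a,b,f)$ together with the conservation law $R + |\nabla f|^2 = \mathrm{const}$ inherited by contracting (\ref{soliton}). Smooth closing of the total space at $s=0$ by collapsing the $S^1$-fiber onto the zero section $\hat M$ forces $a(0)=0$, $b(0)=b_0 > 0$, $b'(0)=0$, $f'(0)=0$, and a topological condition on $a'(0)$ determined by $k$ and $p(\hat M,\omega)$ (the analogue of $a'(0)=k$ from (\ref{warped})). This leaves a one-parameter family of local solutions $(a_{b_0},b_{b_0},f_{b_0})$ extended as far as the ODE remains regular.

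\textbf{Step 2: Dichotomy via shooting.} I would partition the admissible parameters $b_0\in(0,\infty)$ into two disjoint open sets $\mathcal{A}$ and $\mathcal{B}$. The set $\mathcal{A}$ consists of those $b_0$ for which the solution extends to all $s\in[0,\infty)$ and is \emph{collapsed}, i.e.\ $a$ stays bounded while $b\to\infty$; these are the complete collapsed solitons already produced by \cite{Wink17} and \cite{Stol17} and one expects $\mathcal{A}$ to be nonempty (and open) for an interval of small $b_0$. The set $\mathcal{B}$ consists of those $b_0$ for which the solution breaks down at some finite $s_0 < \infty$, say $b\to 0$ or $a/b \to \infty$; here the hypothesis $k > p(\hat M,\omega)$ enters, since it is exactly this regime in which the fiber twists fast enough, compared to the Einstein constant of the base, to drive the base to collapse when $b_0$ is large. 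Openness of $\mathcal{B}$ follows from continuous dependence on initial conditions together with transversality of the vanishing of $b$, and openness of $\mathcal{A}$ from stability of the collapsed asymptotic regime. Hence the critical set $\mathcal{C}=(0,\infty)\setminus(\mathcal{A}\cup\mathcal{B})$ is nonempty.

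\textbf{Step 3: Asymptotics and non-collapsedness at the critical parameter.} Pick $b_0^\ast\in \mathcal{C}$. By construction the corresponding solution is forward-complete (not in $\mathcal{B}$) and not collapsed (not in $\mathcal{A}$). I would then establish monotonicity properties of suitable combinations of $a,b,a',b',f'$, propagate sign conditions such as $a',b'>0$ past a critical $s$, and invoke the conservation law to force $a,b\to\infty$ with $a/b\to 1$ and $s^{-1/2}a(s),\ s^{-1/2}b(s)$ converging to a positive constant. This matches the known asymptotics of the quotient of the $(2n+2)$-dimensional Bryant soliton by $\mathbb{Z}_k$ and gives $a\sim b\sim C\sqrt{s}$. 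From these asymptotics $|\mathrm{Rm}|\lesssim 1/s$ and $\mathrm{Vol}\,B(x,r)\gtrsim r^{2n+2}$ at large scale, so the soliton is non-collapsed.

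\textbf{Main obstacle.} The hardest step is ruling out ``grey-zone'' behavior at $b_0^\ast\in \mathcal{C}$: the trajectory must not oscillate, collapse along a subsequence, or degenerate in some subtle way distinct from the two extremes defining $\mathcal{A}$ and $\mathcal{B}$. This requires a careful barrier and monotonicity analysis of the ODE system, using the gradient structure of (\ref{soliton}) and the conservation law to trap the critical solution into the Bryant asymptotic regime. The assumption $k > p(\hat M,\omega)$ must be used both to guarantee that $\mathcal{A}$ and $\mathcal{B}$ are simultaneously nonempty and to exclude the borderline Kähler/Calabi-type (Eguchi--Hanson-like) solutions that dominate when $k \leq p$.
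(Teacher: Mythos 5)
Your overall architecture --- reduce to a cohomogeneity-one ODE system, run a shooting argument on a one-parameter family of initial data, and extract the non-collapsed soliton at a critical parameter separating complete collapsed behavior from finite-distance breakdown --- is exactly the paper's strategy. But there are three concrete gaps. First, the shooting parameter: the system is degenerate at $s=0$, and a local solution is only determined once $f''(0)$ is specified (theorem \ref{analiticity}); the paper normalizes $b(0)=1$ via the scaling symmetry $g\mapsto\alpha g$ and shoots in $f''(0)\leq 0$. Shooting in $b_0=b(0)$ as you propose is ill-posed as written, since you never say what $f''(0)$ is. If $f''(0)$ is held fixed at a negative value, your family is, modulo the scaling action $(b(0)^2,f''(0))\mapsto(\alpha b(0)^2,f''(0)/\alpha)$, the same one-parameter family as the paper's --- but this needs to be said, and the borderline case $f''(0)=0$ (which is essential below) degenerates to a single scaling orbit in your parametrization.

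Second, the nonemptiness of your breakdown set $\mathcal{B}$ is precisely where the hypothesis $k>p(\hat M,\omega)$ enters, and ``the fiber twists fast enough'' is not an argument. The paper's lemma \ref{intersect} establishes it by an explicit computation: at $f''(0)=0$ one has $f\equiv 0$ (lemma \ref{f_mono}), the system becomes the Page--Pope Einstein system, which integrates in closed form in the gauge $ap=L$, and the explicit solution reaches $a^2=(2n+2)L^2>L^2=b^2$, i.e.\ $Q=a/b>1$, exactly when $a'(0)=(n+1)k/p>n+1$. Third, and most seriously, you correctly flag the ``grey zone'' at the critical parameter as the main obstacle but supply no mechanism to close it. The paper's mechanism is the convexity structure of $Q$ (lemma \ref{Qkeylemma}): any interior critical point of $Q$ with $Q<1$ is a strict local maximum and any with $Q>1$ is a strict local minimum, so $Q$ is eventually monotone, $Q_\infty$ exists, and for complete solutions $Q_\infty\in\{0,1\}$ with the stated $\sqrt{s}$ asymptotics (theorem \ref{thm-asymptotics}, proved by a separate phase-plane analysis you also omit). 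The critical solution is then complete because the set of parameters for which $a$ and $b$ cross in finite distance is open; and it cannot have $Q_\infty=0$, since then $\max Q<1$ would persist for slightly larger $f''(0)$ by continuous dependence, and once $Q$ attains a local maximum below $1$ it stays below $1$ forever, forcing those nearby solutions to be complete as well and contradicting the definition of the critical parameter. Without these ingredients the shooting argument does not close.
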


\subsection{Taub-Nut like solitons}

In the final part of this paper we construct Taub-Nut like, $U(n+1)$-invariant steady Ricci solitons on $\mathbb{R}^{2n+2}, n \geq 1,$ and give another proof of the existence of the Bryant soliton in even dimensions greater or equal to four. As $U(n+1)$-invariant metrics on $\R^{2n+2}$ can be written in 
the form of the warped product metric (\ref{warped_hd}) on the dense subset $\R_{>0} \times S^{2n+1} \subset \mathbb{R}^{2n+2}$, the Ricci soliton equation (\ref{soliton}) reduces to the same system of linear differential equations. We merely need to modify the boundary conditions to account for the change in topology. In particular, we need to require $a=b=0$ and $a'=b'=1$ at $s=0$. The Taub-Nut metrics (see \cite{T51}, \cite{H77} and \cite{BB85}) are of this form. Notice also that when $a=b$ everywhere the resulting metric is rotationally symmetric. This is exploited in the construction of the Bryant soliton.

\section{Gradient steady Ricci soliton equations}
In Appendix A we show how for a metric of the form (\ref{metric}) the steady gradient Ricci soliton equation (\ref{soliton}) reduces to the following system of ODEs 
\begin{align}
\label{solitoneqn1}
f'' &= \frac{a''}{a} + 2n \frac{b''}{b} \\
\label{solitoneqn2}
a''&= 2n\left(\frac{a^3}{b^4}- \frac{a'b'}{b}\right) + a'f' \\
\label{solitoneqn3}
b''&= \frac{2n+2}{b} - 2 \frac{a^2}{b^3}-\frac{a'b'}{a} - (2n-1)\frac{(b')^2}{b} + b'f',
\end{align} 
where $(f, a, b): \R_{\geq 0} \rightarrow \R^3$ are functions depending on $s$ only and $n$ is the complex dimension of the base manifold $\hat{M}$. Note that we obtain the same soliton equations for metrics (\ref{warped}) and (\ref{warped_hd}), because they are special cases of the general metric (\ref{metric}).

The boundary conditions on $a$, $b$ and $f$, which ensure smoothness of the metric at $s=0$, depend on the topology of the underlying manifold through $p$ and $k$. In particular, the period of $\tau$ is equal to $\Delta \tau =\frac{2\pi p}{(n+1)k}$, which follows by either considering the holonomy of the connection $A$ or the construction of the line bundle given the Chern class $k \sigma$. Therefore we must require $a'(0) \Delta \tau = 2\pi$ in order to avoid a conical singularity at $s=0$. A sufficient condition to ensure smoothness of the metric and potential function $f$ at $s=0$ is that $a$ is smoothly extendable to an odd function and $b,f$ are smoothly extendable to even functions around $s=0$. Notice that equations $(\ref{solitoneqn1})-(\ref{solitoneqn3})$ depend on $f'$ and $f''$ only, allowing us to assume without loss of generality $f(0) = 0$. Finally, by the scaling symmetry $g \rightarrow \alpha g$, $\alpha \in \R$, we can fix $b(0) = 1$. In summary our boundary conditions at $s=0$ read
\begin{align}
\label{boundaryconditions}
a = 0 && a' &= (n+1)\frac{k}{p} \\ \nonumber
b = 1 && b' &= 0 \\ \nonumber
f = 0 && f' &= 0.
\end{align}
For sections 2-7 we assume these boundary conditions to hold in all lemmas and theorems stated. Note when the underlying manifold is the complex line bundle $O(-k)$ over $\mathbb{C}P^n$, we have $p = n+1$ and therefore $a'(0) = k$.

Equations $(\ref{solitoneqn1})-(\ref{solitoneqn3})$ with the above boundary conditions (\ref{boundaryconditions}) are degenerate at $s=0$ and we must specify $f''(0)$ to obtain a unique solution. This is explained in Appendix B, where we prove the following theorem:

\begin{thm}
\label{analiticity}
Fix $n \in \N$ and $a_0, f_0^{\ast} \in \R$. Then there exists an $\epsilon > 0$ such that
\begin{enumerate}
\item For any $|f_0-f_0^{\ast}|<\epsilon$ there exists a unique analytic solution $(f,a,b): (-\epsilon, \epsilon) \setminus \{0\}\rightarrow \R^3$ to the soliton equations (\ref{solitoneqn1})-(\ref{solitoneqn3}) satisfying the initial conditions $a(0) = 0$, $a'(0) = a_0$, $b(0) = 1$, $b'(0) = 0$, $f(0) = f'(0) = 0$ and $f''(0) = f_0$.
\item $a$ is an odd function and $b,f$ are even functions
\item The solution $(f,a,b)$ depends analytically on $f_0$
\end{enumerate}
\end{thm}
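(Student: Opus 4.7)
The plan is to regularize the apparent singularity at $s=0$ via the substitution $t=s^2$ together with $a(s)=s\,\alpha(t)$, $b(s)=\beta(t)$, $f(s)=\phi(t)$, a choice motivated by the asserted parity of the solution. A direct computation yields $a''=2s(3\alpha'+2t\alpha'')$, $b''=2\beta'+4t\beta''$, and $f''=2\phi'+4t\phi''$ (primes on Greek letters denote $d/dt$), so the singular ratios $a''/a$ and $a'b'/a$ collapse against the factors of $s$, and (\ref{solitoneqn1})--(\ref{solitoneqn3}) rewrites as
\begin{equation*}
t\,\mathcal{M}(\alpha,\beta)\,y''=\mathcal{N}(t,y,y'),\qquad y=(\alpha,\beta,\phi),
\end{equation*}
where $\mathcal{M}$ and $\mathcal{N}$ are analytic in a neighborhood of the origin and $\mathcal{M}(a_0,1)$ is invertible.

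Next I would identify the admissible initial data. Setting $t=0$ in the rewritten system kills every $y''$ term and leaves three algebraic relations between $y(0)$ and $y'(0)$: (\ref{solitoneqn3}) forces $\beta'(0)=(n+1)/2$, (\ref{solitoneqn2}) forces $\alpha'(0)=(a_0/3)(f_0/2-n(n+1))$, and (\ref{solitoneqn1}) reduces to $\phi'(0)=f_0/2$, which is automatically consistent since it is a linear combination of the other two at $t=0$ (reflecting the Bianchi-type character of (\ref{solitoneqn1})). Thus the single parameter $f_0$ pins down all of $y(0)$ and $y'(0)$.

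With admissible initial data fixed, I would expand $y(t)=\sum_{k\geq 0}y_k\,t^k/k!$ and substitute. Matching the coefficient of $t^j$ gives a linear equation
\begin{equation*}
\bigl(j\,\mathcal{M}(a_0,1)+L\bigr)\,y_{j+1}=R_j,
\end{equation*}
where $L=-\partial_{y'}\mathcal{N}|_{(0,y(0),y'(0))}$ is a fixed matrix and $R_j$ depends only on $y_0,\ldots,y_j$. Invertibility of $j\mathcal{M}(a_0,1)+L$ holds for all sufficiently large $j$ because $\mathcal{M}(a_0,1)$ is invertible and $L$ is bounded, and the finitely many remaining small values of $j$ are checked directly. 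The recursion therefore determines each $y_{j+1}$ uniquely. Convergence of the series on $|t|<\epsilon^2$ is then obtained by a standard majorant estimate--dominating $\mathcal{N}$ by a scalar analytic function and comparing to the model $tz''=H(t,z,z')$, whose unique formal solution is classically analytic. Reverting $t=s^2$ produces (1) and (2) at once, since $a(s)=s\,\alpha(s^2)$ is automatically odd and $b,f$ are even.

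For (3), I would repeat the construction with $f_0$ treated as an additional complex parameter: $\mathcal{M}(a_0,1)$ and $L$ are independent of $f_0$, each $R_j$ is polynomial in $f_0$ through $y_0,\ldots,y_j$, and the majorant bound applies uniformly in $|f_0-f_0^{\ast}|<\epsilon$, producing joint analyticity of $y$ in $(t,f_0)$. The main technical obstacle is exactly this uniformity--controlling the growth of $R_j$ compatibly with $(j\mathcal{M}+L)^{-1}$ across the whole parameter neighborhood--but once the regularizing substitution has been carried out the argument is a standard Cauchy--Kovalevskaya-type estimate and the rest of the proof is bookkeeping.
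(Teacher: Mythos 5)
Your overall strategy --- regularize the degenerate point by passing to $t=s^2$ with the parity built into the ansatz, determine the admissible first-order data from the $t=0$ limit of the equations, and then run a formal-power-series-plus-majorant argument --- is sound and runs parallel to the paper's. The paper instead takes $r=a^2$ as the independent variable (writing $g=\frac{da^2}{h(a^2)}+g_{a,b(a)}$), reduces to a genuine first-order system $r\dot u=P(u,r,c)$ in $(\dot f,h,b,\dot b)$, and invokes a parameter-dependent generalization of the Briot--Bouquet theorem; parity then falls out because everything is analytic in $a^2$. Your identification of the admissible data ($\beta'(0)=\tfrac{n+1}{2}$, $\alpha'(0)=\tfrac{a_0}{3}(\tfrac{f_0}{2}-n(n+1))$, $\phi'(0)=\tfrac{f_0}{2}$, with (\ref{solitoneqn1}) proportional to (\ref{solitoneqn2}) at $t=0$) is correct and matches the paper's $b''(0)=n+1$ and the appearance of $f''(0)$ as the one free parameter.

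The genuine gap is the sentence ``the finitely many remaining small values of $j$ are checked directly.'' That check is the heart of any Briot--Bouquet argument and you have not performed it. If $\det\bigl(j\,\mathcal{M}(a_0,1)+L\bigr)=0$ for even one positive integer $j$, the recursion at that order either has no solution or a non-unique one, logarithmic terms generically appear, and both the existence and the uniqueness claimed in (1) fail; ``$\mathcal{M}$ invertible and $L$ bounded'' rules this out only for large $j$. Since $\mathcal{M}$ is not a multiple of the identity, this is a generalized eigenvalue computation that must actually be carried out --- the paper's analogue is the explicit characteristic polynomial $m^3(m+1)$ of $\frac{\partial P}{\partial u}(\vec 0,0,c)$, whose roots $0$ and $-1$ are not positive integers, together with the explicit inverse (\ref{invmatrix}). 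The same computation is needed a second time: your majorant estimate and the uniformity in $f_0$ require a bound of the form $\sup_{j\in\N}\norm{\bigl(j\,\mathcal{M}+L\bigr)^{-1}}<\infty$, which again only follows once the absence of positive-integer generalized eigenvalues is established. Two smaller points: the degeneracy you observe at $j=0$ is precisely the resonance that makes $f_0$ a free parameter, consistent with $0$ being a root of the paper's characteristic polynomial; and as written your argument yields uniqueness only within the ansatz class $a=s\,\alpha(s^2)$, $b=\beta(s^2)$, $f=\phi(s^2)$, whereas the theorem asserts uniqueness among all analytic solutions --- to close this you should run the coefficient recursion on an arbitrary power series in $s$ and check that it forces the stated parity.
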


This theorem, in conjunction with standard theory of ordinary differential equations, shows that solutions to the soliton equations $(\ref{solitoneqn1})-(\ref{solitoneqn3})$ depend smoothly on $f''(0)$.

\section{Evolution equations for $Q= \frac{a}{b}$, $f$ and $R$}
\label{Qevol}
A central quantity in this paper is the quotient $Q:= \frac{a}{b}$. From the soliton equations (\ref{solitoneqn1})-(\ref{solitoneqn3}) we can compute its evolution equation
\begin{equation}
\label{Qeqn}
Q'' = \left( f' - (2n+1) \frac{b'}{b}\right) Q' + \frac{2n+2}{b^2}\left(Q^3- Q \right),
\end{equation}
which leads to the following key lemma:
\begin{lem}
\label{QkeyLemma}
Let $(f,a,b): I \rightarrow \R^3$ be a solution to the soliton equations (\ref{solitoneqn1})-(\ref{solitoneqn3}). Assume that $s \in I$ is a local extrema of $Q$. If $Q(s) > 1$ $(0 < Q(s) < 1)$ then $Q$ has a strict local minimum (maximum) at $s$. Moreover
\end{lem}

\begin{proof}
The proof follows from the evolution equation (\ref{Qeqn}) of $Q$.
\end{proof}
\begin{remark}
From Lemma \ref{QkeyLemma} and the boundary condition $Q(0) = 0$ it follows that if for some $s_0 > 0$ we have $Q(s_0)>1$ then $Q'(s)>0$ for all $s>s_0$.
\end{remark}
Via the Bianchi identity we obtain the following lemma:
\begin{lem}
\label{bianchi-Lemma}
For a steady gradient soliton $R_{ij} + \nabla_i \nabla_j f = 0$ the identity
\begin{equation}
\label{bianchi}
\frac{1}{2} \nabla_j R = \nabla_k R^{\;k}_j = R_j^{\;k}\nabla_k f 
\end{equation}
holds true. A consequence is that $\nabla_i \left(R + |\nabla f|^2\right) = 0$ and therefore $R + |\nabla f|^2$ is constant.
\end{lem}
\begin{proof}
The fist equality of (\ref{bianchi}) is just the contracted Bianchi identity. Computing
\begin{align*}
\nabla_k R^k_{\;j} &= - \nabla_k \nabla^k\nabla_j f \\
&=(\nabla_j \nabla_k -\nabla_k \nabla_j) \nabla^k f - \nabla_j \nabla_k \nabla^k f \\
&=R^k_{\; ajk} \nabla^a f + \nabla_j R \\
&=- R_{aj} \nabla^a f + 2\nabla_k R^k_{\;j}
\end{align*}
we obtain the second equality after rearranging terms. A simple computation then shows that $\nabla_i \left(R + |\nabla f|^2\right) = 0$.
 \end{proof}
From the lemma above we derive evolution equations for $f$ and $R$.

\begin{lem}
\label{lem:feqn-integrated}
The potential function $f$ of a steady gradient soliton satisfies
\begin{equation}
\label{feqn}
\Delta f -|\nabla f|^2  = - R(0) = 2f''(0)
\end{equation}
\end{lem}
\begin{proof}
The first equality follows from Lemma \ref{bianchi-Lemma}, the fact that $R + \Delta f = 0$, and the boundary condition $f'(0) = 0$.

To prove the second, note that the second derivatives in the expression (\ref{scalarcurvature}) for $R$ can be eliminated with help of the soliton equations (\ref{solitoneqn2})-(\ref{solitoneqn3}). Carrying this out you obtain 
\begin{equation}
\label{Rfirstorder}
R = 2n \frac{a^2}{b^4} - \frac{2n(2n+2)}{b^2} + 4n \frac{a'b'}{ab}+ 2n(2n-1)\left(\frac{b'}{b}\right)^2 -2f'\left(\frac{a'}{a} + 2n \frac{b'}{b}\right).
\end{equation} 
Now apply L'H\^opital's rule around $s=0$ and note $b''(0) = n+1$ by (\ref{solitoneqn3}) to deduce that $R(0) = -2f''(0)$.
\end{proof}

This leads to the following corollary:

\begin{cor}
\label{cor:f-bounds}
A complete solution to the soliton equations (\ref{solitoneqn2})-(\ref{solitoneqn3}) satisfies (i) $f''(0) \leq 0$ and (ii) $(f')^2 \leq R(0) = -2 f''(0)$.
\end{cor}
\begin{proof}
It is a well-known fact that $R\geq0$ for any complete ancient solution to Ricci flow (see for instance \cite[Corollary 2.5]{Chen09}). Hence (i) follows by Lemma \ref{lem:feqn-integrated} above. For (ii) note that $R + |\nabla f|^2 = - 2 f''(0)$ and $|\nabla f|^2 = (f')^2$.
\end{proof}

We assume $f''(0) \leq 0$ for the remainder of the paper. Below we derive an evolution equation for $R$.
\begin{lem} The scalar curvature of a gradient steady soliton satisfies
 \begin{equation}
 \label{Reqn}
\Delta R + 2 |Ric|^2 = \nabla^i R \nabla_i f
 \end{equation}
\end{lem}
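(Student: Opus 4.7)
The plan is to derive the evolution equation by differentiating the identity from Lemma \ref{bianchi-lemma} and then invoking the soliton equation together with the contracted second Bianchi identity. Specifically, I would start from
\[
R^{\;k}_{j} \nabla_k f = \tfrac{1}{2} \nabla_j R
\]
and apply $\nabla^j$ to both sides. The right-hand side becomes $\tfrac{1}{2}\Delta R$, while the left-hand side splits via the Leibniz rule into a term involving $\nabla^j R^{\;k}_{j}$ and a term involving $R^{\;k}_{j} \nabla^j \nabla_k f$.

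Next I would handle each piece. For the first piece, the contracted second Bianchi identity gives $\nabla^j R^{\;k}_{j} = \tfrac{1}{2}\nabla^k R$, so contracting with $\nabla_k f$ produces exactly $\tfrac{1}{2}\nabla^k R\,\nabla_k f$. For the second piece, the soliton equation $R_{jk} + \nabla_j \nabla_k f = 0$ yields $\nabla^j \nabla_k f = -R^{\;j}_{k}$, and therefore $R^{\;k}_{j}\nabla^j \nabla_k f = -R^{\;k}_{j} R^{\;j}_{k} = -|\mathrm{Ric}|^2$.

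Combining these gives
\[
\tfrac{1}{2}\Delta R = \tfrac{1}{2}\nabla^k R\,\nabla_k f - |\mathrm{Ric}|^2,
\]
and rearranging produces the claimed identity
\[
\Delta R + 2|\mathrm{Ric}|^2 = \nabla^i R\,\nabla_i f.
\]

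I don't anticipate any real obstacle here; it is a short and clean computation. The only subtle step is remembering to use the Bianchi identity in the form $\nabla^j R^{\;k}_{j} = \tfrac{1}{2}\nabla^k R$ (which is already packaged in Lemma \ref{bianchi-lemma}) rather than trying to commute derivatives on curvature tensors directly. Once that step is in place, the soliton equation does the rest of the work and the identity falls out by rearrangement.
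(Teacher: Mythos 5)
Your proof is correct and follows essentially the same route as the paper: both apply $\nabla^j$ to the identity $\nabla_j R = 2R_j^{\;k}\nabla_k f$ from Lemma \ref{bianchi-lemma}, then use the contracted Bianchi identity and the soliton equation to evaluate the two resulting terms. The paper merely states the differentiated identity and leaves the substitution implicit, whereas you spell it out.
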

\begin{proof}
Applying the Bianchi identity (\ref{bianchi}) we obtain
 \begin{equation*}
 \nabla^j\nabla_j R = 2 \left( \nabla^j R_j^{\;k}\right)\nabla_k f + 2 R_j^{\;k} \nabla_k \nabla^j f,
 \end{equation*}
from which the desired result follows.
\end{proof}

\section{Monotonicity properties of $a$, $b$, $f$, $f'$ and $R$}
Using the soliton equations (\ref{solitoneqn1})-(\ref{solitoneqn3}) and evolution equations for $f$ and $R$ derived in the section above, we deduce various monotonicity properties of $a$, $b$, $f$, $f'$ and $R$ for $Q< \sqrt{n+1}$.

\begin{lem}
\label{ab_mono}
Let $s_0 >0$ and $(f,a,b): [0,s_0) \rightarrow \R^3$ be a solution to the soliton equations (\ref{solitoneqn1})-(\ref{solitoneqn3}). Then $a$ is strictly increasing on $[0,s_0)$ and $b$ is strictly increasing on any interval $0 \in I \subset [0,s_0)$ on which $Q<\sqrt{n+1}$. Moreover, $b'$ changes its sign at most once in the interval $[0,s_0)$.
\end{lem}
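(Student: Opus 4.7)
The plan is to prove each of the three assertions by a \emph{first-contact} contradiction, exploiting the fact that at a critical point of $a$ or $b$ the corresponding soliton ODE collapses to an expression whose sign is determined by $Q$ alone.

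For the monotonicity of $a$: the boundary condition gives $a'(0) = (n+1)k/p > 0$, so $a' > 0$ on some initial interval. Suppose $s_1 \in (0, s_0)$ is the first time $a'$ vanishes. Then $a$ is strictly positive on $(0, s_1]$, and equation \eqref{solitoneqn2} evaluated at $s_1$ simplifies to $a''(s_1) = 2n\, a(s_1)^3 / b(s_1)^4 > 0$. But $a'$ passing from positive to zero at $s_1$ forces $a''(s_1) \leq 0$, a contradiction. Hence $a'>0$ on all of $[0,s_0)$.

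For the monotonicity of $b$ on $I'$: the key computation is that at any point $s > 0$ with $b'(s)=0$ (where $a(s) > 0$ by the first step), equation \eqref{solitoneqn3} reduces to
\begin{equation*}
b''(s) \;=\; \frac{2n+2}{b} - 2\frac{a^2}{b^3} \;=\; \frac{2(n+1 - Q^2)}{b}.
\end{equation*}
Combined with $b'(0)=0$ and $b''(0) = n+1 > 0$ (established in the discussion preceding Theorem \ref{analiticity}), $b$ is strictly increasing on an initial interval of $I'$. If $b'$ were to vanish at some smallest $s^* \in I' \setminus \{0\}$, then $b' > 0$ on $(0,s^*)$ together with $b'(s^*)=0$ forces $b''(s^*) \leq 0$; but $Q(s^*) < \sqrt{n+1}$ by assumption, so the displayed identity gives $b''(s^*) > 0$, contradiction.

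For the sign-change assertion, suppose for contradiction that $b'$ changes sign at two points $s_1 < s_2$ in $[0,s_0)$. Since $b' > 0$ just after $s=0$, the first change must be from positive to negative and the second from negative to positive, so $b'(s_1)=b'(s_2)=0$ with $b''(s_1) \leq 0$ and $b''(s_2) \geq 0$. The displayed formula yields $Q(s_1) \geq \sqrt{n+1} \geq Q(s_2)$. On the other hand, $a$ is strictly increasing by step one, while $b$ is strictly decreasing on $(s_1,s_2)$ by construction, so $Q(s_2) = a(s_2)/b(s_2) > a(s_1)/b(s_1) = Q(s_1)$, a contradiction.

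The only real subtlety in the argument is correctly handling the degeneracy of \eqref{solitoneqn3} at $s=0$, where $-a'b'/a$ is of the form $0/0$; this is resolved by the known value $b''(0) = n+1$. Beyond that, everything reduces to tracking signs at critical points, with the sign-change statement being the step requiring the most care since it braids together the monotonicity of $a$ and the local convexity of $b$ through the single quantity $Q$.
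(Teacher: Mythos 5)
Your proof is correct and follows essentially the same approach as the paper: evaluate the soliton ODEs at critical points of $a$ and $b$ to read off the signs of $a''$ and $b''$ in terms of $Q$, handling the degeneracy at $s=0$ via $b''(0)=n+1$. The only (harmless) difference is the final step, where the paper rules out a second sign change of $b'$ by invoking the monotonicity of $Q$ when $Q>1$ (lemma \ref{Qkeylemma}), whereas you derive the same conclusion directly from $a$ increasing and $b$ non-increasing between the two putative sign changes --- an equally valid and slightly more self-contained variant.
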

\begin{proof}
The evolution equation (\ref{solitoneqn2}) of $a$ implies that $a'' = 2n \frac{a^3}{b^4}$ whenever $a' = 0$. By the boundary conditions (\ref{boundaryconditions}) we have $a'(0) > 0$, and therefore it follows that $a$ is strictly increasing. Similarly, we deduce from the evolution equation (\ref{solitoneqn3}) of $b$ that $b'' = 2\frac{n+1 - Q^2}{b}$ whenever $b'=0$. Applying L'H\^opital's rule around $s=0$ shows that $b''(0) = n+1>0$. This in conjunction with the boundary condition $b'(0)=0$ implies that $b$ is strictly increasing on any interval $I = [0,s], s>0,$ where $Q < \sqrt{n+1}$. Therefore $b'$ can change its sign only when $Q^2\geq n+1$. Since $Q$ is strictly increasing when $Q>1$ and $b'' = 2\frac{n+1 - Q^2}{b}$ whenever $b'=0$, it follows that $b'$ changes its sign at most once.
\end{proof}
We can prove the monotonicity properties of $f$ in a similar fashion:
\begin{lem}
\label{f_mono}
Let $s_0>0$ and $(f,a,b): [0,s_0) \rightarrow \R^3$ be a solution to the soliton equations (\ref{solitoneqn1})-(\ref{solitoneqn3}). Then,
\begin{enumerate}
\item if $f''(0)<0$ then $f$ and $f'$ are strictly decreasing functions, in particular
	\begin{enumerate}
	\item $f' < 0$ for $s >0$
	\item $f'' < 0$ for $s \geq 0$
	\end{enumerate}
\item if $f''(0) = 0$ then $f\equiv0$
\end{enumerate}
\end{lem}
\begin{proof}
Using the expression (\ref{laplacian}), we may write equation (\ref{feqn}) for $f$ as
\begin{equation}
\label{flocalcoordinates}
f'' + \left(\frac{a'}{a}+2n\frac{b'}{b}\right)f' - (f')^2 = 2f''(0)
\end{equation}
Hence at an extremal point of $f$ we have $f'' = 2f''(0) <0$. This in conjunction with the boundary condition $f'(0)=0$ proves that $f$ is strictly decreasing and $f' < 0$ for $s >0$. 

By the soliton equations (\ref{solitoneqn1}) - (\ref{solitoneqn3}) we have
\begin{equation*}
\left( \frac{a'}{a} + 2n \frac{b'}{b}\right)' = f'' - \left ( \left(\frac{a'}{a}\right)^2 +  2n\left(\frac{b'}{b}\right)^2\right ).
\end{equation*}
Differentiating (\ref{flocalcoordinates}) we therefore obtain
\begin{equation*}
f''' = f'f'' + \left ( \left(\frac{a'}{a}\right)^2 +  2n\left(\frac{b'}{b}\right)^2 \right) f' - \left( \frac{a'}{a} + 2n \frac{b'}{b}\right) f''.
\end{equation*}
Hence
\begin{equation}
\label{f3deriv-extrema}
f''' =\left ( \left(\frac{a'}{a}\right)^2 +  2n\left(\frac{b'}{b}\right)^2\right) f' < 0
\end{equation}
whenever $f''=0$. The strict inequality follows from noting that $f''=0$ can only hold true for some $s>0$, because $f''(0) < 0$ by assumption, and by Lemma \ref{ab_mono} we have $a' >0$ whenever $s>0$. This proves that $f'' < 0$. 

We now prove statement (2). The continuous dependence on $f''(0)$ of solutions $(f,a,b)$ to the soliton equations (\ref{solitoneqn1})-(\ref{solitoneqn3}) and statement (1) imply that $f \leq 0$ everywhere. As $a'(0)>0$ the expression $\frac{a'}{a} + 2n\frac{b'}{b}$ is Lipshitz continuous on any closed interval $I\subset (0,s_0)$. Applying standard theory of ordinary differential equations, it follows from equation (\ref{flocalcoordinates}) that if $f$ is constantly zero in a neighborhood of $s=0$ it must be constantly zero on all of $[0,s_0)$. Therefore it suffices to show that $f$ is zero near $s=0$. Assume this is not the case. Then there exists an interval of the form $(0,\epsilon)$, $\epsilon>0$, on which $f,f'<0$. The boundary conditions (\ref{boundaryconditions}) imply, after restricting to a smaller $\epsilon>0$ if necessary, that $\frac{a'}{a} +  2n\frac{b'}{b}>0$ on $(0,\epsilon)$. This, however, leads to a contradiction as (\ref{flocalcoordinates}) then implies that $f''>0$ on $(0,\epsilon)$. 
\end{proof}

We obtain the following corollary from the above lemma:

\begin{cor}
\label{df-lower-bound}
Let $s_0>0$ and $(f,a,b): [0,s_0) \rightarrow \R^3$ be a solution to the soliton equations (\ref{solitoneqn1})-(\ref{solitoneqn3}) with $f''(0) < 0$ and
$Q < \sqrt{n+1}$. Then $f' \geq - \sqrt{-2f''(0)}$.
\end{cor}
\begin{proof}
By Lemma \ref{ab_mono} both $a$ and $b$ are strictly increasing on $[0,s_0)$. Hence by Lemma \ref{f_mono} and equation (\ref{flocalcoordinates}) the result follows.
\end{proof}

Finally, we also prove that $R$ is monotonically decreasing.
\begin{lem}
\label{R_mono}
Let $s_0>0 $ and $(f,a,b): [0, s_0) \rightarrow \R^3$ be a solution to the soliton equations (\ref{solitoneqn1})-(\ref{solitoneqn3}) with $f''(0) <0$. Then $R$ is a strictly decreasing function of $s$.
\end{lem}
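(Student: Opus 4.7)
The cleanest route is via the Hamilton-type identity $R+|\nabla f|^2\equiv\text{const}$ for steady gradient solitons. Taking $\nabla_i$ of the left-hand side, using the soliton equation to replace $\nabla_i\nabla_j f=-R_{ij}$, and invoking Lemma \ref{bianchi-lemma} to identify $\tfrac12\nabla_i R$ with $R_{ij}\nabla^j f$, one finds $\nabla_i(R+|\nabla f|^2)=0$. Evaluating at $s=0$ using the boundary conditions and the identity $R(0)=-2f''(0)$ (already obtained in the proof of Lemma \ref{f_mono}), we get
\begin{equation*}
R+(f')^2 \;=\; -2f''(0).
\end{equation*}

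Differentiating with respect to $s$ then gives the single product formula
\begin{equation*}
R' \;=\; -\,2\,f'\,f''.
\end{equation*}
The proof therefore reduces to verifying that both factors are strictly negative on $(0,s_0)$. The factor $f'<0$ on $(0,s_0)$ is exactly Lemma \ref{f_mono}(2).

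For $f''<0$ on $[0,s_0)$ I would reuse the ODE obtained in the proof of Lemma \ref{f_mono},
\begin{equation*}
f''' \;=\; f'f'' \;+\; \Bigl(\bigl(\tfrac{a'}{a}\bigr)^2 + 2n\bigl(\tfrac{b'}{b}\bigr)^2\Bigr) f' \;-\; \Bigl(\tfrac{a'}{a}+2n\tfrac{b'}{b}\Bigr) f''.
\end{equation*}
Evaluated at any point where $f''=0$, this simplifies to $f'''=\bigl((\tfrac{a'}{a})^2+2n(\tfrac{b'}{b})^2\bigr)f'$, which is strictly negative for $s>0$ since $f'<0$. If $f''$ ever reaches zero on $[0,s_0)$, let $s^\ast>0$ be the first such point; then $f''<0$ on $[0,s^\ast)$, so $f''$ approaches $0$ from below at $s^\ast$, forcing $f'''(s^\ast)\ge 0$, contradicting the formula above. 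Hence $f''<0$ throughout $[0,s_0)$.

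Combining the two sign statements, $R'(s)=-2f'(s)f''(s)<0$ for every $s\in(0,s_0)$, so $R$ is strictly decreasing. There is no real obstacle; the one subtlety is ensuring strict (not merely nonpositive) negativity of $f''$, which the $f'''$ formula handles at once.
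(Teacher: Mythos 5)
Your proof is correct, but it takes a genuinely different route from the paper. The paper works directly from the evolution equation $\Delta R + 2|Ric|^2 = \nabla^i R\,\nabla_i f$ (its Lemma on $R$, equation (\ref{Reqn})): at any critical point $s^{\ast}$ of $R$ the drift term vanishes, so $R'' = \Delta R = -2|Ric|^2 = -2|\nabla\nabla f|^2 < 0$, i.e.\ every critical point of $R$ is a strict local maximum; combined with $R'(0)=0$ and $R''(0)<0$ this forces $R$ to be strictly decreasing. You instead integrate once more: the first integral $R + |\nabla f|^2 \equiv -2f''(0)$ (the standard Hamilton identity for steady solitons, which the paper never writes down explicitly but which follows from its Lemma \ref{bianchi-lemma} exactly as you argue) yields the pointwise formula $R' = -2f'f''$, reducing everything to the signs of $f'$ and $f''$. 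Your route buys a sharper, fully explicit expression for $R'$, at the cost of having to establish the strict inequality $f''<0$ on all of $[0,s_0)$ — which you do correctly via the $f'''$ formula and a first-zero argument, essentially reproducing a step already carried out inside the paper's proof of Lemma \ref{f_mono} (whose statement only records that $f'$ is strictly decreasing). The paper's route avoids that extra step because $|\nabla\nabla f|^2>0$ already follows from $f'<0$ for $s>0$ (the off-radial Hessian components $\frac{a'}{a}f'$ are nonzero there) together with $f''(0)<0$ at the origin. One cosmetic slip: the identity $R(0)=-2f''(0)$ is proved in the lemma establishing (\ref{feqn}), not in Lemma \ref{f_mono}; this does not affect the argument.
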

\begin{proof}
The evolution equation (\ref{Reqn}) for $R$ and the previous lemma imply that
$$\Delta R = R'' \leq -2|Ric|^2 = -2|\nabla_i\nabla_j f|^2 < 0,$$
whenever $R' = 0$. Since $R'= 0$ and $R''<0$ at $s=0$ we obtain the desired result.
\end{proof}

\section{Existence of complete solitons}
In this section we will prove the following theorem:
\begin{thm}
\label{thm-collapsed}
On the line bundle $L_{k,p}$ for $k, p \in \N$ there exists a family of complete steady Ricci solitons. In particular, there exists a $f_0 \geq 0$ such that any $f''(0) < -f_0$ yields a complete solution $(f,a,b): [0,\infty) \rightarrow \R^3$ to the soliton equations (\ref{solitoneqn1})-(\ref{solitoneqn3}).
\end{thm}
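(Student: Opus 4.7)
By Theorem~\ref{analiticity}, the initial value problem (\ref{solitoneqn1})--(\ref{solitoneqn3}) with boundary data (\ref{boundaryconditions}) and parameter $f''(0)$ admits a unique analytic solution on a maximal interval $[0, s_{\max})$ depending analytically on $f''(0)$. The goal is to produce $f_0 \geq 0$ such that every $f''(0) < -f_0$ gives $s_{\max} = \infty$. First I would extract a global a priori bound on $|f'|$: tracing the soliton equation yields $R = -\Delta f$, so (\ref{feqn}) reduces to the standard steady-soliton identity
\begin{equation*}
R + (f')^2 = -2 f''(0) \qquad \text{on } [0, s_{\max}),
\end{equation*}
and together with $R \geq 0$ this gives $|f'| \leq c := \sqrt{-2 f''(0)}$. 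A standard continuation criterion for ODEs then reduces the problem to uniform bounds on $a, b, a', b'$ with a positive lower bound on $b$ over any compact subinterval.

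The crux is to show that $Q = a/b$ stays below some $Q^{\ast} < 1$ on $[0, s_{\max})$ when $c$ is sufficiently large. From (\ref{Qeqn}), whenever $0 < Q < 1$ and $b' \geq 0$, both the friction coefficient $f' - (2n+1) b'/b$ and the forcing $\tfrac{2n+2}{b^2}(Q^3 - Q)$ are nonpositive, so the dynamics is dissipative toward $Q = 0$. Near $s = 0$, Taylor expansion at the boundary conditions gives $b(s) = 1 + \tfrac{n+1}{2} s^2 + O(s^4)$ and $f'(s) = -\tfrac{c^2}{2} s + O(s^3)$, so the friction is of order $-c^2 s / 2$. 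Comparing $Q$ with the solution of the linearized overdamped equation shows that $Q$ attains its first maximum at $s \sim 1/c$ with height $Q_{\max} = O(1/c)$, hence $Q_{\max} < Q^{\ast}$ for $c$ large. Any subsequent critical point of $Q$ in $(0,1)$ would, by Lemma~\ref{Qkeylemma}, be a strict local maximum, contradicting the fact that $Q$ is decreasing past its first peak, so $Q$ remains strictly below $Q^{\ast}$ throughout $[0, s_{\max})$.

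Once $Q \leq Q^{\ast} < 1$ is secured, Lemma~\ref{ab_mono} makes $b$ strictly increasing, so $b \geq 1$, and $a \leq Q^{\ast} b$. Plugging these and $|f'| \leq c$ back into (\ref{solitoneqn2})--(\ref{solitoneqn3}) yields Gronwall-type estimates on $a'/a$ and $b'/b$, giving at most exponential growth of $a, b$ and their derivatives on any finite $s$-interval. No blow-up scenario survives, so $s_{\max} = \infty$; taking $f_0$ to be the infimum over $|f''(0)|$ for which the $Q$-barrier step goes through completes the argument.

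The main obstacle is the $Q$-barrier itself. The $Q$-equation is nonautonomous with coefficients depending on the unknown $b(s)$, so the overdamped picture has to be justified globally, not merely in a shrinking neighborhood of $s = 0$. Particularly delicate is ruling out $Q$ rebounding above $1$ far from the origin: once it does, the remark after Lemma~\ref{Qkeylemma} forces $Q' > 0$ for all subsequent $s$, and Lemma~\ref{ab_mono} then permits $b' < 0$ beyond $Q = \sqrt{n+1}$, potentially driving $b \to 0$ in finite time and breaking the continuation.
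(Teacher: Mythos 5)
Your overall strategy is the same as the paper's: a continuation criterion valid while $Q$ stays bounded away from $\sqrt{n+1}$ (Lemma \ref{Qcontrol}), exponential damping of $Q'$ by the factor $e^{f}/b^{2n+1}$ obtained by integrating (\ref{Qeqn}) (Lemma \ref{Qbound}), the observation that a very negative $f''(0)$ forces $f'$ to become very negative very quickly, and Lemma \ref{Qkeylemma} to propagate the resulting bound $Q<1$ for all $s$. The final worry you raise, that $Q$ might rebound above $1$ far from the origin, is in fact already disposed of by the argument you give two sentences earlier: a function that has passed a strict local maximum in $(0,1)$ cannot acquire another critical point while remaining in $(0,1)$, so no rebound is possible; and if $Q$ never attains a maximum, the integrated bound on $Q'$ applies throughout.

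Two steps need repair. First, the a priori bound $|f'|\leq\sqrt{-2f''(0)}$ cannot be obtained from $R\geq 0$: Chen's theorem applies to \emph{complete} ancient solutions, and completeness is exactly what is being proved, so the argument is circular on the maximal interval $[0,s_{\max})$. The paper instead deduces $-\sqrt{-2f''(0)}\leq f'\leq 0$ directly from Lemma \ref{f_mono} and the ODE form (\ref{flocalcoordinates}) of (\ref{feqn}); this only requires $\frac{a'}{a}+2n\frac{b'}{b}\geq 0$, which holds precisely in the regime $Q<\sqrt{n+1}$ where the continuation argument operates. Second, the quantitative claim $Q_{\max}=O(1/c)$ is asserted by comparison with a ``linearized overdamped equation'', but the Taylor remainders of $f'$ and $b$ at $s=0$ involve higher derivatives of the solution that themselves grow with $|f''(0)|$, so the expansion cannot be trusted on an interval of length $\sim 1/c$ without further justification. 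The paper circumvents this by first proving $Q(s)\leq a'(0)s$ on $[0,1/a'(0)]$ (so that $a$ and $b$ are increasing there) and then deriving the differential inequality $f''<f''(0)+2n(n+1)+\frac{(f')^2}{2}$, which guarantees $f'\leq -c_0$ beyond any prescribed small $s_0$ once $f''(0)$ is sufficiently negative; integrating $Q'\leq a'(0)e^{f}/b^{2n+1}$ then gives $Q\leq a'(0)(s_0+1/c_0)$, which can be made less than $1$. With these two substitutions your outline becomes the paper's proof.
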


\begin{remark}
These solitons were constructed independently in \cite{Wink17} and \cite{Stol17}.
\end{remark}

The strategy is to show that as long as $Q  < \sqrt{n+1}$ a solution cannot blow up in finite distance and then use the evolution equation (\ref{Qeqn}) of $Q$ to argue that one can keep $Q$ arbitrarily small by picking $f''(0) \ll -1$.

\begin{lem}
\label{Qcontrol}
Let $s_0>0$ and $(f,a,b): [0, s_0) \rightarrow \R^3$ be a solution to the soliton equations (\ref{solitoneqn1})-(\ref{solitoneqn3}) with $f''(0) \leq 0$. If $Q < \sqrt{n+1}$ on $[0, s_0)$, the solution can be extended past $s_0$.
\end{lem}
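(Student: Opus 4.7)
The plan is to bound $a, b, a', b', f, f'$ uniformly on $[0, s_0)$ and ensure $a, b$ stay bounded away from $0$; standard ODE continuation then extends the solution past $s_0$. The first and crucial step is an a priori $L^\infty$ bound on $f'$. The contracted Bianchi identity (Lemma \ref{bianchi-lemma}) yields the standard steady-soliton conservation law $R + (f')^2 = -2f''(0) =: C \geq 0$ (evaluated at $s=0$ using $f'(0)=0$ and $R(0) = -2f''(0)$ from the proof of (\ref{feqn})). Substituting into (\ref{flocalcoordinates}) gives
\begin{equation*}
f'' = (f')^2 - Uf' - C, \qquad U := \frac{a'}{a} + 2n\frac{b'}{b}.
\end{equation*}
Lemma \ref{ab_mono}, together with the hypothesis $Q < \sqrt{n+1}$, gives $a', b' \geq 0$, hence $U \geq 0$. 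Lemma \ref{f_mono} gives $f' \leq 0$, and strict monotonicity of $f'$ combined with continuity of $f''$ forces $f'' \leq 0$ pointwise. Plugging these sign conditions into the displayed equation yields $(f')^2 \leq Uf' + C \leq C$, so $|f'| \leq \sqrt{C}$.

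Next I would bound the logarithmic derivatives $\xi := a'/a$ and $\eta := b'/b$. Fix any $\epsilon \in (0, s_0)$. The identities $\xi' = a''/a - \xi^2$ and $\eta' = b''/b - \eta^2$, combined with (\ref{solitoneqn2})--(\ref{solitoneqn3}), give
\begin{align*}
\xi' &= \tfrac{2nQ^2}{b^2} - 2n \xi \eta + \xi f' - \xi^2, \\
\eta' &= \tfrac{2n+2-2Q^2}{b^2} - \xi \eta - 2n\eta^2 + \eta f'.
\end{align*}
Discarding the non-positive terms and using $Q^2 < n+1$, $b \geq 1$ produces Riccati-type inequalities $\xi' \leq 2n(n+1) - \xi^2$ and $\eta' \leq 2(n+1) - 2n\eta^2$. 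A standard barrier argument then shows that $\xi$ cannot exceed $\max\{\xi(\epsilon), \sqrt{2n(n+1)}\}$ and $\eta$ cannot exceed $\max\{\eta(\epsilon), \sqrt{(n+1)/n}\}$ on $[\epsilon, s_0)$, both finite.

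From $\eta$ uniformly bounded, $b$ grows at most exponentially on $[0, s_0)$, hence is bounded; from $Q < \sqrt{n+1}$, $a \leq \sqrt{n+1}\, b$ is also bounded. Monotonicity of $a$ and $b$ (Lemma \ref{ab_mono}) gives $a(s) \geq a(\epsilon) > 0$ and $b(s) \geq 1$ on $[\epsilon, s_0)$. Thus the state vector $(a, b, f, a', b', f') = (a, b, f, \xi a, \eta b, f')$ is confined to a compact subset of the region $\{a, b > 0\}$ on which the ODE right-hand side is smooth, and Picard--Lindel\"of extends the solution past $s_0$.

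The main obstacle is the a priori bound on $f'$ in the first step: without it, the drift terms $\xi f'$ and $\eta f'$ in the Riccati equations could conceivably dominate the quadratic damping and destroy the barrier argument. Coupling the energy identity $R + (f')^2 = C$ with the sign information $f'' \leq 0$ sidesteps this cleanly, and the remaining work reduces to sign-definite manipulations of the soliton equations. The degenerate case $f''(0) = 0$ ($C=0$) reduces via Lemma \ref{f_mono}(1) to $f\equiv 0$, in which case $|f'| \leq \sqrt{C}$ is trivial and the same Riccati bounds apply verbatim.
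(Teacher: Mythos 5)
Your proposal is correct and follows essentially the same route as the paper: the crucial a priori bound $|f'|\le\sqrt{-2f''(0)}$ is extracted from (\ref{flocalcoordinates}) using the signs of $f''$ and $\left(\frac{a'}{a}+2n\frac{b'}{b}\right)f'$ exactly as in the paper's proof, and the conclusion is the same Picard--Lindel\"of continuation once all state variables are confined to a compact set where the right-hand side is smooth. The only difference is technical: the paper bounds $a''$ and $b''$ directly by constants (using $Q<\sqrt{n+1}$ and $b\ge 1$) and integrates twice, whereas you run Riccati inequalities for the logarithmic derivatives $a'/a$ and $b'/b$; both yield the required bounds.
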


\begin{proof}
The monotonicity properties of $a$, $b$ and $f$ derived in Lemma \ref{ab_mono} imply that whenever $Q< \sqrt{n+1}$
\begin{align*}
a'' &\leq 2n\frac{a^3}{b^4} \leq \frac{2n(n+1)^{\frac{3}{2}}}{b} \leq  2n(n+1)^{\frac{3}{2}}\\
b'' &\leq\frac{2n+2}{b} - 2\frac{a^2}{b^3}\leq 2 \frac{n+1-Q^2}{b} \leq 2 (n+1),
\end{align*}
which in turn shows that
\begin{align*}
a'(s) &< a'(0) + 2n(n+1)^{\frac{3}{2}}s && a(s) <  a'(0) s +  n(n+1)^{\frac{3}{2}} s^2 \\
b'(s) &< 2(n+1)s && b(s) < 1 + (n+1) s^2 
\end{align*}
as long as $Q < \sqrt{n+1}$ holds true. Moreover, since $a', b' > 0$ for all $s \in (0,s_0)$, there exists a $c>0$ such that $a, b > c$ for $s \in [\frac{s_0}{2}, s_0)$. Finally, recall that by Corollary \ref{df-lower-bound} we have
\begin{equation*}
 -\sqrt{-2f''(0)} \leq f' \leq 0.
\end{equation*}
Applying the Picard–Lindelöf theorem we conclude that the solution may be extended past $s_0$.

\end{proof}
Now we show that for at least short distance $s$ we have $Q < \sqrt{n+1}$.

\begin{lem}
\label{Qbound}
Let $(f,a,b): [0, s_0) \rightarrow \R^3$ be a solution to the soliton equations (\ref{solitoneqn1})-(\ref{solitoneqn3}). Then $Q(s) \leq a'(0)s$ for $s \leq \frac{1}{a'(0)}$. Thus the solution may be extended to $[0,\frac{\sqrt{n+1}}{a'(0)})$.
\end{lem}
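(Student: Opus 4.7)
The plan is to exploit the evolution equation (\ref{Qeqn}) for $Q$ together with the monotonicity results just established. The initial conditions give $Q(0) = 0$ and $Q'(0) = a'(0)$. On any subinterval of $[0, s_0)$ where $Q \leq 1$, we have $Q < \sqrt{n+1}$, so Lemma \ref{ab_mono} gives $b' \geq 0$ and Lemma \ref{f_mono} (together with the standing assumption $f''(0) \leq 0$) gives $f' \leq 0$. Consequently, the coefficient $f' - (2n+1) b'/b$ of $Q'$ in (\ref{Qeqn}) is non-positive, while $Q^3 - Q \leq 0$ on $[0,1]$.

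My goal is to show that $Q'(s) \leq a'(0)$ throughout the region where $Q \leq 1$; integration then yields $Q(s) \leq a'(0) s$. Since $Q'(0) = a'(0) > 0$, there is a maximal initial interval $[0, s_1)$ on which $Q' > 0$ and $Q \leq 1$. On this interval both summands on the right-hand side of (\ref{Qeqn}) are non-positive, hence $Q'' \leq 0$, and integration gives $Q'(s) \leq a'(0)$ and $Q(s) \leq a'(0) s$ on $[0, s_1]$. If $s_1$ is an interior turning point with $Q(s_1) < 1$, then $Q(s_1) \in (0, 1)$ since $a > 0$ for $s > 0$ by Lemma \ref{ab_mono}, so by Lemma \ref{Qkeylemma} the point $s_1$ is a strict local maximum. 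Any subsequent critical point of $Q$ still lying in $(0,1)$ would likewise have to be a strict local maximum, which is impossible once $Q$ has begun to decrease. Thus $Q$ decreases monotonically past $s_1$ as long as it stays in $(0,1)$, and $Q(s) \leq Q(s_1) \leq a'(0) s_1 \leq a'(0) s$ persists. The alternative $Q(s_1) = 1$ only strengthens the conclusion: it directly forces $s_1 \geq 1/a'(0)$.

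Combining cases, $Q(s) \leq a'(0) s$ holds throughout the region where $Q \leq 1$. In particular, if $Q(\tau) = 1$ occurs for some $\tau \in [0, s_0)$, continuity forces $\tau \geq 1/a'(0)$; otherwise $Q < 1$ throughout $[0, s_0)$. In either case, $Q < \sqrt{n+1}$ on $[0, \min(s_0, 1/a'(0)))$, so Lemma \ref{Qcontrol} extends the solution to $s = 1/a'(0)$. The main technical subtlety is guaranteeing that $Q$ cannot climb back above the line $a'(0) s$ after a turning point; this is handled by Lemma \ref{Qkeylemma} excluding local minima of $Q$ in $(0,1)$, together with the fact that $Q$ cannot return to $0$ since $a > 0$ for $s > 0$.
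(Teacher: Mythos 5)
Your proposal is correct and follows essentially the same route as the paper: both drop the non-positive $(Q^3-Q)$ term in (\ref{Qeqn}), use $f'\le 0$ and $b'\ge 0$ to conclude $Q'(s)\le Q'(0)=a'(0)$ while $Q$ is increasing and at most $1$, and then integrate and invoke Lemma \ref{Qcontrol}. The only differences are cosmetic: the paper obtains $Q'\le a'(0)$ via the integrating factor $e^{f}b^{-(2n+1)}$ rather than your blunter $Q''\le 0$, and it leaves implicit the post-turning-point behaviour that you spell out with Lemma \ref{Qkeylemma}.
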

\begin{proof}
From the evolution equation (\ref{Qeqn}) for $Q$ we have that whenever $0\leq Q \leq 1$ and $Q'>0$ 
\begin{equation*}
\left[\ln Q'\right]' \leq \left[f -(2n+1) \ln b \right]'.
\end{equation*}
Integrating we obtain
\begin{equation}
\label{Qineq}
Q'(s)\leq Q'(0) \frac{e^{f(s)}}{b(s)^{2n+1}} \leq a'(0)
\end{equation}
by the monotonicity properties of $f$ and $b$, and the fact that $Q'(0) = a'(0)$. Integrating again, yields the desired result by Lemma \ref{Qcontrol}.
\end{proof}

We now prove Theorem \ref{thm-collapsed}:

\begin{proof}[Proof of Theorem \ref{thm-collapsed}]
From the soliton equations (\ref{solitoneqn1})-(\ref{solitoneqn3}) it follows that
\begin{align*}
f'' &= \frac{a''}{a} + 2n \frac{b''}{b} \\ 
	&= - 2n \frac{a^2}{b^4}+ \frac{4n(n+1)}{b^2}- 4n \frac{a'b'}{ab} - 2n(2n-1)\left(\frac{b'}{b}\right)^2 + \left(\frac{a'}{a}+ 2n \frac{b'}{b}\right) f'.
\end{align*}
Solving this equation for $\left(\frac{a'}{a}+ 2n \frac{b'}{b}\right) f'$ and substituting the resulting expression into the evolution equation (\ref{flocalcoordinates}) of $f$ yields
\begin{equation*}
 \label{eqn2} f'' = f''(0) - n \frac{a^2}{b^4}+ \frac{2n(n+1)}{b^2}- 2n \frac{a'b'}{ab}- n(2n-1)\left(\frac{b'}{b}\right)^2 + \frac{(f')^2}{2}
\end{equation*}
As long as $a$ and $b$ are increasing, which by Lemma \ref{Qbound} is true for $s <\frac{1}{a'(0)}$, we have
\begin{equation}
\label{f-diff-ineq}
f'' <  f''(0) + 2n(n+1) + \frac{(f')^2}{2}
\end{equation}
Notice that for any $s_0 \in (0, \frac{1}{a'(0)})$ and $c_0 >0$ there exists a $f_0>0$ such that for any function $f$ satisfying the differential inequality (\ref{f-diff-ineq}) above, subject to the boundary conditions $f'(0)= 0$ and $f''(0) < -f_0$, we have $f'(s_0) \leq -c_0$. The monotonicity properties of $f$ derived in Lemma \ref{f_mono} then imply that $f(s) \leq -c_0(s-s_0)$ for $s>s_0$.

As long as $Q \leq 1$ and $Q' \geq 0$, the inequality (\ref{Qineq}) holds and $b \geq 1$ by Lemma \ref{ab_mono}. Hence integrating inequality (\ref{Qineq}) implies that

\begin{align*}
Q &\leq a'(0) \left(s_0 + \int_{s_0}^{\infty} e^{-c_0(s-s_0)} \right)\\
  &\leq a'(0)\left(s_0 + \frac{1}{c_0} \right),
\end{align*}

as long as $Q \leq 1$ and $Q' \geq 0$. Thus, choosing $s_0$ small and $c_0$ large we can ensure that $Q < 1$ for all $s \geq 0$ or until we reach a point $s^\ast >0$ at which $Q'(s^\ast) = 0$. In the latter case, however, Lemma \ref{QkeyLemma} implies that $Q' \leq 0 $ for all $s > s^\ast$, and therefore $Q < 1$ everywhere. Now Lemma \ref{Qcontrol} yields the desired result.
\end{proof}

\begin{remark}
\label{rmk-collapsed}
From the above proof it follows that for any $c>0$ there exists a $f_0 >0$ such that a solution $(f,a,b): [0, \infty) \rightarrow \R^3$ to the soliton equations (\ref{solitoneqn1})-(\ref{solitoneqn3}) with $f''(0) < -f_0$ satisfies $Q \leq c$ everywhere.
\end{remark}

\section{Asymptotics}
In this section we study the behavior of $Q$ as $s\rightarrow \infty$ in the case that $f''(0) < 0$. The goal is to prove the following theorem:
\begin{thm}
\label{thm-asymptotics}
Let $(f,a,b): [0,\infty) \rightarrow \R^3$ be a solution to the soliton equations (\ref{solitoneqn1})-(\ref{solitoneqn3}) with $f''(0)<0$. Then either $\lim_{s\rightarrow\infty} Q = 0$ or $\lim_{s\rightarrow\infty} Q = 1$. Furthermore 
\begin{enumerate}
\item if $\lim_{s\rightarrow \infty} Q = 0$ we have $a \sim const$ and $b\sim const \sqrt{s}$
\item if $\lim_{s\rightarrow \infty} Q = 1$ we have $a \sim b\sim const \sqrt{s}$
\end{enumerate}
as $s \rightarrow \infty$. Finally, any complete solution to the soliton equations satisfies $Q\leq 1$ everywhere.
\end{thm}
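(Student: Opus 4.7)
The plan starts by establishing the standard steady-soliton identity: tracing (\ref{soliton}) gives $\Delta f=-R$, and combining with lemma \ref{bianchi-lemma} yields $\nabla(R+|\nabla f|^2)=0$, so $R+(f')^2$ is constant, with value $-2f''(0)$ by the boundary conditions. Since $R\geq 0$ on any complete ancient solution, this gives the pointwise bound $(f')^2\leq -2f''(0)$, and lemmas \ref{f_mono} and \ref{R_mono} then show that $f'$ strictly decreases to some $f'_\infty\in[-\sqrt{-2f''(0)},0)$ (the endpoint $0$ is excluded because $R$ strictly drops below $R(0)$).

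\textbf{The key quantity $H$ and eventual monotonicity of $Q$.} Introduce
\begin{equation*}
H(s)\;:=\;e^{-f(s)}\,b(s)^{2n+1}\,Q'(s),
\end{equation*}
and a direct calculation from (\ref{Qeqn}) gives $H'(s)=(2n+2)\,e^{-f}b^{2n-1}Q(Q^2-1)$. Hence $H$ is strictly monotone wherever $Q$ is bounded away from $1$, with $H(0)=a'(0)>0$. Combined with lemma \ref{Qkeylemma} and the remark after it, this shows $Q$ is eventually monotone: any interior critical point with $0<Q<1$ is a strict local maximum (so $Q$ decreases monotonically after its first such critical point), while once $Q>1$ at some $s_0$ one has $Q'>0$ on $[s_0,\infty)$. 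Therefore $L:=\lim_{s\to\infty}Q(s)$ exists in $[0,\infty]$.

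\textbf{Identification of $L$.} By lemma \ref{ab_mono}, $b$ is eventually monotone with limit $b_\infty\in[1,\infty]$. If $L<\infty$ and $b_\infty<\infty$, then $a=Qb$ is bounded, $a'$ is Lipschitz by (\ref{solitoneqn2}) and has $\int_0^\infty a'\,ds<\infty$, so Barbalat yields $a'\to 0$; but passing to the limit in (\ref{solitoneqn2}) forces $a''\to 2na_\infty^3/b_\infty^4>0$ (using $a_\infty\geq a(\epsilon)>0$), contradicting $a'\to 0$. Hence $b\to\infty$ whenever $L<\infty$. To rule out $L=\infty$, I would combine the bound $R\leq -2f''(0)$ with the explicit formula (\ref{Rfirstorder}): the nonnegative terms there force $b'/b$, $a'/a$ and $2nQ^2/b^2$ each to remain bounded, so $f''=a''/a+2nb''/b\to 0$ and a dominant-balance analysis of (\ref{solitoneqn2})--(\ref{solitoneqn3}) then excludes unbounded-$Q$ trajectories. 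Finally, with $b\to\infty$ and $L$ finite in hand, multiply (\ref{Qeqn}) by $b^2$; the representation $Q'=He^f/b^{2n+1}$, with $H$ monotone and bounded, $f\sim f'_\infty s$ and $b$ of polynomial growth, makes $b^2Q''$ and $b^2f'Q'$ negligible at infinity, leaving $(2n+2)L(L^2-1)=0$ and hence $L\in\{0,1\}$.

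\textbf{Asymptotics, the bound $Q\leq 1$, and main obstacle.} For $L\in\{0,1\}$, inspecting (\ref{Rfirstorder}) together with $a'/a,b'/b\to 0$ forces $R\to 0$, hence $f'_\infty=-\sqrt{-2f''(0)}$. Substituting the ansatz $b\sim C\sqrt{s}$ into (\ref{solitoneqn3}) and balancing the surviving leading terms $(2n+2-2L^2)/b$ against $b'f'_\infty$ determines $C^2=-(2n+2-2L^2)/f'_\infty>0$; the relation $a=Qb$ then yields the stated behavior of $a$. The $Q\leq 1$ claim follows by contradiction: if $Q(s_0)>1$ at some $s_0$, the remark after lemma \ref{Qkeylemma} gives $Q'>0$ on $[s_0,\infty)$, so $L\geq Q(s_0)>1$, contradicting $L\in\{0,1\}$. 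The technically hardest step is excluding $L=\infty$ and making the limit-passage inside the $Q$ equation rigorous; both require quantitative a priori estimates on the growth rates of $a$ and $b$ and the decay of $Q'$, derived from the conservation law $R+(f')^2=-2f''(0)$ and the explicit formula (\ref{Rfirstorder}). Reconciling the several polynomial and exponential scales involved is the main technical obstacle.
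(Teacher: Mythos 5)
Your skeleton matches the paper's: the quantity $H=e^{-f}b^{2n+1}Q'$ is exactly the paper's integrated form (\ref{dQ_eqn}), and eventual monotonicity of $Q$ via lemma \ref{Qkeylemma} is how the paper gets existence of the limit. But the central step — pinning down $L\in\{0,1\}$ — does not work as you describe. Multiplying (\ref{Qeqn}) by $b^2$ and "passing to the limit" is circular: the term $b^2f'Q'$ is \emph{not} negligible. Indeed $b^2f'Q'=f'He^f/b^{2n-1}$, and if $L(L^2-1)\neq 0$ then $H\sim(2n+2)L(L^2-1)\int_0^s e^{-f}b^{2n-1}$, so by L'H\^opital $b^2f'Q'\to -(2n+2)L(L^2-1)$, which exactly cancels the reaction term $(2n+2)(Q^3-Q)$; the limit relation reduces to $0=0$ for every $L$ and identifies nothing. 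The paper instead extracts second-order information: lemma \ref{asymptotics} (a phase-plane argument for the rewritten equations (\ref{aQ}), (\ref{bQ})) shows $a^2$ and $b^2$ each grow linearly with slopes $\sim 4nQ_\infty^4/(-f'_\infty)$ and $\sim 4(n+1-Q_\infty^2)/(-f'_\infty)$, and consistency with $Q_\infty^2=\lim a^2/b^2$ forces $nQ_\infty^4=Q_\infty^2(n+1-Q_\infty^2)$, whose only roots in $[0,\sqrt{n+1})$ are $0$ and $1$ (with lemma \ref{Qinftyn+1} and corollary \ref{asymptotics_cor} disposing of $Q_\infty^2\geq n+1$). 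Some replacement for this growth-rate analysis is unavoidable.

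Two further gaps. First, your exclusion of $L=\infty$ — which carries the entire burden of the claim $Q\leq 1$ — rests on the assertion that the terms in (\ref{Rfirstorder}) other than the ones you list are nonnegative; this is false ($-2n(2n+2)/b^2$ is negative, and $4na'b'/(ab)$ becomes negative once $b'<0$, which is precisely what happens when $Q$ grows), so the bound $R\leq -2f''(0)$ does not control $a'/a$, $b'/b$, $Q^2/b^2$ individually, and the "dominant-balance analysis" is not supplied. The paper's final lemma does the real work here: for $Q^2>n+2$ one gets $b''<-2/b$, hence $b'<0$ in finite distance, hence $a''\geq c_1a^3-c_2a'$, which blows up in finite distance — so no complete solution ever has $Q>1$. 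Second, in case (1) the relation $a=Qb$ with $Q\to 0$ and $b\sim C\sqrt{s}$ only gives $a=o(\sqrt{s})$, not $a\sim const$; the paper's lemma \ref{aasympQ0} needs the quantitative decay $Q\lesssim s^{-1/2+\epsilon}$, obtained from a differential inequality for $Q$, before integrating (\ref{aQ}) to bound $a$. Your conserved quantity $R+|\nabla f|^2=-2f''(0)$ and the resulting identification $f'_\infty=-\sqrt{-2f''(0)}$ are correct and are a nice addition not made explicit in the paper, but they do not substitute for the missing growth-rate estimates.
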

It is useful to rewrite the soliton equations (\ref{solitoneqn2}) and (\ref{solitoneqn3}) for $a$ and $b$ in the form
\begin{align}
\label{aQ} a'' &= \frac{2nQ^4}{a} - 2n \frac{(a')^2}{a} + \left(f' +2n \frac{Q'}{Q} \right) a' \\
\label{bQ} b'' &= 2\frac{n+1-Q^2}{b} - 2n \frac{(b')^2}{b}  + \left(f'- \frac{Q'}{Q}\right) b'.
\end{align}
As the limits of both $f'$ and $Q$ as $s\rightarrow \infty$ exists, we will be able to derive the asymptotics of $a$ and $b$ from the following auxiliary lemma:

\begin{lem} 
\label{asymptotics}
Let $\alpha>1$, $\epsilon >0$ and $c_1^{\ast}, c_2^{\ast} > \epsilon$. Assume $c_i: [0, \infty) \rightarrow \R$, $i = 1,2$, are two positive smooth functions satisfying
\begin{equation*}
|c_i(s) - c_i^{\ast}| < \epsilon, \: i = 1,2,
\end{equation*}
for all $s\geq 0$. Then for a solution $y: [0, \infty) \rightarrow \R$ to the ODE
\begin{equation}
\label{ODEasymptotics}
y'' = \frac{c_1(s)}{2y} - \alpha \frac{(y')^2}{y} - c_2(s) y'
\end{equation}
with initial conditions $y(0), y'(0)>0$ there exists an $s_0>0$ such that for $s >s_0$ 
\begin{equation*}
y^2(s_0) + \gamma_-\left(1+\epsilon\right)^{-1} \left(s-s_0\right) \leq y^2(s) \leq y^2(s_0) + \gamma_+ \left(s-s_0\right),
\end{equation*}
where 
\begin{equation*}
\gamma_{\pm} = \frac{c_1^{\ast} \pm \epsilon}{c_2^{\ast}\mp\epsilon}.
\end{equation*}
\end{lem}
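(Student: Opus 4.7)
The plan is to work with $u := y^2$, for which one expects linear growth $u \sim \gamma s$. Computing $u'' = 2(y')^2 + 2yy''$ and substituting (\ref{ODEasymptotics}), I obtain the equivalent equation
\[
u'' + c_2(s)\, u' = c_1(s) - (4n-2)(y')^2,
\]
where $(y')^2 = (u')^2/(4u)$. Since $n \geq 1$, dropping the nonnegative term $(4n-2)(y')^2$ yields the upper differential inequality $u'' + c_2 u' \leq c_1$; controlling the same term will supply the lower bound.

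For the upper bound I would introduce the integrating factor $e^{C_2(s)}$ with $C_2(s) := \int_0^s c_2$. Combining $c_1 \leq c_1^* + \epsilon$ with $c_2 \geq c_2^* - \epsilon$ (the latter converts $\int_{s_0}^s e^{C_2}$ into $(c_2^* - \epsilon)^{-1}\bigl(e^{C_2(s)} - e^{C_2(s_0)}\bigr)$), a Gronwall-type estimate gives
\[
u'(s) \leq \gamma_+ + \bigl(u'(s_0) - \gamma_+\bigr) e^{-(C_2(s) - C_2(s_0))}.
\]
If I can select $s_0$ with $u'(s_0) \leq \gamma_+$, then $u' \leq \gamma_+$ on $[s_0,\infty)$ and integration produces the upper bound. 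Existence of such $s_0$ I would prove by contradiction: if $u' > \gamma_+$ everywhere, then $u \geq u(0) + \gamma_+ s$, so $(y')^2 = (u')^2/(4u) \geq K/s$ for some $K > 0$ and all large $s$. Feeding this back into the full equation and using $c_1 - c_2 u' < 0$ whenever $u' > \gamma_+$, one obtains $u'' \leq -(4n-2)K/s$, whose logarithmic divergence forces $u' \to -\infty$, contradicting $u' > \gamma_+ > 0$.

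For the lower bound I would first establish $u \to \infty$: boundedness of $u'$ (from the upper-bound step) and of $(y')^2 \leq (u')^2/(4u(0))$ make $u''$ bounded, hence $u'$ uniformly continuous. If $u$ were bounded, monotonicity of $u$ would force $u' \in L^1$, so by Barbalat $u' \to 0$; then $u'' \to c_1 > 0$, contradicting $u' \to 0$. With $u \to \infty$ in hand, $(y')^2 = (u')^2/(4u) \to 0$. For any $\delta > 0$, I choose $s_0$ large enough that $(4n-2)(y')^2 \leq \delta$ on $[s_0, \infty)$, so that $u'' + c_2 u' \geq c_1 - \delta$. The analogous Gronwall argument, now using $c_1 \geq c_1^* - \epsilon$ and $c_2 \leq c_2^* + \epsilon$, yields
\[
u'(s) \geq \gamma_-^{\delta} + \bigl(u'(s_0) - \gamma_-^{\delta}\bigr) e^{-(C_2(s) - C_2(s_0))}, \qquad \gamma_-^{\delta} := \frac{c_1^* - \epsilon - \delta}{c_2^* + \epsilon}.
\]
A direct computation shows $\gamma_-^{\delta} \geq \gamma_- (1+\epsilon)^{-1}$ whenever $\delta \leq \epsilon(c_1^* - \epsilon)/(1+\epsilon)$, which fixes $\delta$ in terms of $\epsilon$. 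A case analysis paralleling the upper bound (either $u'$ reaches $\gamma_-^{\delta}$ from above, or approaches it monotonically from below) then yields $u'(s) \geq \gamma_- (1+\epsilon)^{-1}$ for $s \geq s_0$, after enlarging $s_0$ if necessary, and integrating produces the stated lower bound.

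I expect the main obstacle to be the contradiction step producing $s_0$ with $u'(s_0) \leq \gamma_+$: the linear Gronwall inequality by itself only yields $\limsup_{s \to \infty} u'(s) \leq \gamma_+$, and one must reinsert the nonlinear $(y')^2$ term, together with its $K/s$ lower bound derived from the coarse linear growth of $u$, to upgrade this to a pointwise $u'(s_0) \leq \gamma_+$. A secondary technical point is calibrating $\delta$ precisely against $\epsilon$ so that the absorbed error lands on the sharp factor $(1+\epsilon)^{-1}$ appearing in the statement.
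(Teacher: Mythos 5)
Your argument is correct in substance, but it takes a genuinely different route from the paper's. The paper substitutes $z = y^{2n+1}$, under which the nonlinear term $2n(y')^2/y$ cancels \emph{exactly} and the equation becomes $z'' = \tfrac{2n+1}{2}c_1(s)\,z^{(2n-1)/(2n+1)} - c_2(s)z'$; it then runs a phase-plane argument in $(z,w)=(z,z')$, showing the trajectory is eventually trapped between the curves $w = f_-(z)(1+\epsilon)^{-1}$ and $w=f_+(z)$, inside which $\gamma_-(1+\epsilon)^{-1} < (y^2)' < \gamma_+$, so both bounds drop out simultaneously. You instead work with $u=y^2$, which leaves the residual term $-(4n-2)(y')^2$ on the right-hand side; this costs you the extra bootstrapping ($u'>0$, $u\to\infty$, hence $(y')^2\to 0$) and a $\delta$-calibration against $\epsilon$, but it replaces the phase-plane geometry with elementary integrating-factor comparisons and makes the origin of the sharp constants $\gamma_\pm$ and of the asymmetric factor $(1+\epsilon)^{-1}$ completely transparent. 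One local slip to fix: in your contradiction step, the inequality $u \geq u(0)+\gamma_+ s$ bounds $(y')^2 = (u')^2/(4u)$ from \emph{above}, not below; the lower bound $(y')^2 \geq K/s$ must come from pairing $(u')^2 > \gamma_+^2$ with the \emph{upper} linear bound $u \leq u(0)+\gamma_+ s + O(1)$ that your Gronwall estimate supplies --- you do invoke this (``coarse linear growth'') in your closing paragraph, so the ingredient is present, just miscited at that point. Finally, note that your contradiction step needs $4n-2>0$; since $n$ here is the (positive integer) complex dimension of the base this is harmless, and the paper's own ``$f_+$ is strictly increasing'' step degenerates at $n=\tfrac12$ in exactly the same way.
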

\begin{proof}
Substituting $z = y^{\alpha+1}$ we can rewrite ODE (\ref{ODEasymptotics}) as
\begin{equation*}
z'' = \frac{\alpha+1}{2} c_1(s) z^{\frac{\alpha-1}{\alpha+1}} - c_2(s)z'.
\end{equation*}
Define $w = z'$ and $f(z,s) = \frac{\alpha+1}{2} \frac{c_1(s)}{c_2(s)} z^{\frac{\alpha-1}{\alpha+1}}$, yielding the system of equations
\begin{align*}
z' &= w \\
w' &= c_2(s) \left(f(z,s)-w\right).  
\end{align*}
We investigate the phase diagram of this ODE system in the first quadrant $w>0 , z>0$ (where we take $z$ to be the $x$-axis and $w$ to be the $y$-axis).  Consider the subregions 
\begin{align*}
R_-&: \qquad 0 < w < f_-(z) (1 + \epsilon)^{-1} \\
R_+&:\qquad    w > f_+(z) \\
S&:\qquad  f_-(z) (1 + \epsilon)^{-1} < w < f_+(z)
\end{align*}
of the first quadrant, where
\begin{align*}
f_{\pm}(z) & = \frac{(\alpha+1)}{2} \frac{c_1^{\ast} \pm \epsilon}{c_2^{\ast}\mp\epsilon} z^{\frac{\alpha-1}{\alpha+1}} \equiv \frac{(\alpha+1)}{2} \gamma_{\pm} z^{\frac{\alpha-1}{\alpha+1}}.
\end{align*}
Note that picking $\epsilon>0$ sufficiently small we have $0<f_-(z) < f(z,s) < f_+(z)$. In the subregion $R_-$
\begin{equation*}
\frac{\mathrm{d}w}{\mathrm{d}z} = c_2(s) \left( \frac{f(z,s)}{w} -1 \right)> \left(c_2^{\ast} - \epsilon\right) \epsilon
\end{equation*}
and in the subregion $R_+$
\begin{equation*}
\frac{\mathrm{d}w}{\mathrm{d}z} = c_2(s) \left( \frac{f(z,s)}{w} -1 \right)<0.
\end{equation*}
Because $f_+(z)$ is strictly increasing, any solution starting in $R_+$ will eventually enter $S$ and never return to $R_+$. Similarly $\lim_{ z \rightarrow \infty} f'_-(z) = 0$ implies that any solution starting in $R_-$ will eventually leave $R_-$. We conclude that there exists an $s_0>0$ such that the points $(w(s), z(s)), s > s_0,$ lie in the subregion $S$ of the first quadrant. Thus for $s>s_0$
\begin{equation}
\label{eqn:asymptotics-derivative}
\frac{z'}{z^{\frac{\alpha-1}{\alpha+1}}}  = \frac{\alpha+1}{2}\gamma(s)\left(1 + \epsilon(s)\right)^{-1},
\end{equation}
where $\gamma(s)$ and $\epsilon(s)$ are functions in the range $(\gamma_- , \gamma_+)$ and $(0,\epsilon)$ respectively. Integrate this equation from $s_0$ to $s$ and re-substitute $y$ to obtain the desired result.
\end{proof}

We can prove a slight generalization to Lemma \ref{asymptotics}, by considering the case in which $0< c_1(s) \leq \epsilon$:

\begin{cor}
\label{asymptotics_cor}
In the same setup as in Lemma \ref{asymptotics}, apart from the assumption that $c_1^{\ast} = 0$, it follows that there exists an $s_0 >0$ such that for $s > s_0$
\begin{equation*}
y^2(s_0) \leq y^2(s) \leq y^2(s_0) + \frac{\epsilon}{c_2^{\ast}-\epsilon}\left(s-s_0\right).
\end{equation*}
\end{cor}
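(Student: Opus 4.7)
The plan is to replay the proof of lemma \ref{asymptotics} essentially verbatim, substituting $z = y^{2n+1}$, $w = z'$ so that the ODE becomes the first-order system
\begin{align*}
z' &= w,\\
w' &= \tfrac{2n+1}{2}\,c_1(s)\,z^{\frac{2n-1}{2n+1}} - c_2(s)\,w,
\end{align*}
and then running the same phase-plane analysis in the first quadrant. The only structural change is that $c_1^\ast = 0$ together with $c_1(s) > 0$ makes the would-be lower envelope $f_-(z)$ vanish (or become negative), so there is no nontrivial region $R_-$ to be swept out from. One only needs to define the upper envelope
\[
f_+(z) = \tfrac{2n+1}{2}\,\frac{\epsilon}{c_2^\ast - \epsilon}\, z^{\frac{2n-1}{2n+1}},
\]
together with the regions $R_+ = \{w > f_+(z)\}$ and $S = \{0 < w \leq f_+(z)\}$.

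For the upper bound I would repeat the argument that in $R_+$ one has $dw/dz < 0$, while at the boundary $w = f_+(z)$ the hypothesis $c_1(s)/c_2(s) \leq \epsilon/(c_2^\ast-\epsilon)$ gives $dw/dz \leq 0 < f_+'(z)$, so any solution starting in $R_+$ enters $S$ in finite time and cannot return. Thus there is some $s_0$ after which $w(s) \leq f_+(z(s))$, i.e.\ $z'/z^{(2n-1)/(2n+1)} \leq \tfrac{2n+1}{2}\,\epsilon/(c_2^\ast-\epsilon)$. Re-expressed in $y$ this reads $(y^2)' \leq \epsilon/(c_2^\ast-\epsilon)$, and integration from $s_0$ to $s$ produces the stated upper bound.

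The lower bound $y^2(s_0) \leq y^2(s)$ is where the present case differs qualitatively from lemma \ref{asymptotics}: rather than trapping $w$ between two positive envelopes, one simply shows that $w$ stays strictly positive. This is the key observation, and in my view the main (though mild) obstacle, because it requires the strict positivity hypothesis $c_1(s) > 0$. The argument is that if $w$ ever attempted to touch zero from above at some $s_1$ with $z(s_1) > 0$, then at that instant
\[
w'(s_1) = \tfrac{2n+1}{2}\,c_1(s_1)\,z(s_1)^{\frac{2n-1}{2n+1}} > 0,
\]
contradicting the approach to zero. Since $w(0) = (2n+1)\,y(0)^{2n}y'(0) > 0$, we conclude $w(s) > 0$ for all $s \geq 0$; equivalently $y^2$ is monotonically nondecreasing, and in particular $y^2(s) \geq y^2(s_0)$ for $s \geq s_0$. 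Combining the two inequalities yields the corollary.
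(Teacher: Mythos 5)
Your proposal is correct and follows essentially the same route as the paper: the paper likewise notes that $y$ is non-decreasing (which gives the lower bound) and then reruns the phase-plane argument of the lemma with only the single upper envelope $f_+(z)=\tfrac{2n+1}{2}\tfrac{\epsilon}{c_2^{\ast}-\epsilon}z^{\frac{2n-1}{2n+1}}$, showing trajectories enter and remain in the region $w\leq f_+(z)$ before integrating. Your write-up merely supplies the (correct) detail behind the monotonicity claim, namely that $w'>0$ whenever $w=0$ and $z>0$, so $w$ stays positive.
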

\begin{proof}
Notice that $y$ must be non-decreasing, which proves the lower bound. For the upper bound, we follow the proof of Lemma \ref{asymptotics}. This time, however, we only consider the subregions 
\begin{align*}
R_+:&\qquad    w > f_+(z) \\ 
R_-:&\qquad    w \leq f_+(z)
\end{align*}
of the first quadrant of the phase diagram, where 
\begin{equation*}
f_+(z) = \frac{\alpha+1}{2} \frac{\epsilon}{c_2^{\ast} - \epsilon} z ^ {\frac{\alpha-1}{\alpha+1}}.
\end{equation*}
Any solution starting from $R_+$ will eventually enter $R_-$ and remain there. Moreover, any solution starting from $R_-$ remains in $R_-$. Hence there exists an $s_0 > 0$ such that for $s > s_0$
\begin{equation*}
w \leq f_+(z).
\end{equation*}
Integrating this equation gives the desired result.
\end{proof}

Now we proceed to prove:
\begin{lem}
\label{criticalLemma}
Let $(f,a,b): [0,\infty) \rightarrow \R^3$ be a complete solution to the soliton equations (\ref{solitoneqn1})-(\ref{solitoneqn3}). 
Then the limit $Q_{\infty} :=\lim_{s\rightarrow \infty} Q$ exists. Moreover, if $Q_{\infty} < \infty$ and $f''(0) < 0$ then $\lim_{s\rightarrow\infty} Q' = 0$.
\end{lem}
\begin{proof}
By Lemma \ref{QkeyLemma} and Lemma \ref{ab_mono} we know that both $Q'$ and $b'$ change their sign at most once. Therefore the limits 
\begin{align*}
Q_{\infty}&:=\lim_{s\rightarrow \infty} Q   \\
b_{\infty}&:= \lim_{s\rightarrow \infty} b
\end{align*}
both exist. As $f''(0)<0$ by assumption if follows from Lemma \ref{f_mono} that the limit
\begin{equation*}
f'_{\infty} := \lim_{s\rightarrow\infty} f'(s)  < 0 
\end{equation*}
exists and is negative. By Corollary \ref{cor:f-bounds} we also know that $f'_\infty > -\infty$. Rewrite the evolution equation (\ref{Qeqn}) for $Q$ as
\begin{equation*}
\left[b^{2n+1}e^{-f}Q'\right]' = (2n+2)b^{2n-1}e^{-f}\left(Q^3-Q\right)
\end{equation*}
and integrate from $0$ to $s$. This yields 
\begin{equation}
\label{dQ_eqn}
Q'(s) = \frac{Q'(0)e^{f(s)}}{b(s)^{2n+1}} + (2n+2)\frac{e^{f(s)}}{b(s)^{2n+1}}\int_{0}^s b(t)^{2n-1}e^{-f(t)}\left[Q(t)^3-Q(t)\right]\,\mathrm{d}t.
\end{equation}
We distinguish the following three cases:

\vspace{0.5em}
\noindent\textbf{Case 1: $b_{\infty} = 0$}
\vspace{0.5em}

\noindent Lemma \ref{ab_mono} implies that there exists an $s_0 >0$ such that for $s>s_0$ we have $Q^2 > n + 1$ and $b' < 0$. Therefore it follows that
\begin{align*}
\lim_{s \rightarrow \infty} \frac{e^{f}}{b^{2n-1}}\int_{s_0}^{s} b^{2n-1}e^{-f}\left(Q^3-Q\right) \; \mathrm{d}s &\geq n\sqrt{n+1} \lim_{s \rightarrow \infty} e^{f} \int_{s_0}^s e^{-f}\; \mathrm{d}s \\
&= - \frac{n\sqrt{n+1}}{f'_{\infty}} > 0,
\end{align*}
implying that the right hand side of (\ref{dQ_eqn}) tends to $\infty$ as $s \rightarrow \infty$, contradicting our assumption that $Q_{\infty} < \infty$. Hence $b_\infty = 0$ cannot occur. 

\vspace{0.5em}
\noindent\textbf{Case 2: $0 <b_{\infty} < \infty$}
\vspace{0.5em}

\noindent Apply L'H\^opital's rule to the right hand side of (\ref{dQ_eqn}) to compute the limit
\begin{equation*}
\label{Q_lim}
\lim_{s\rightarrow\infty} Q' = -(2n+2)\frac{1}{b_{\infty}^2 f'_{\infty}}\left(Q_{\infty}^3- Q_{\infty}\right).
\end{equation*}
As $0\leq Q_{\infty}< \infty$ it follows that $\lim_{s\rightarrow\infty} Q' = 0$. Moreover, $Q_{\infty} = 0$ or $1$ in this case.

\vspace{0.5em}
\noindent\textbf{Case 3: $b_{\infty} = \infty$}
\vspace{0.5em}

\noindent By Lemma \ref{ab_mono} $b$ is increasing and by Lemma \ref{f_mono} both $f$ and $f'$ are strictly decreasing. Hence there exists a $c>0$ such that $f(s) < - cs$ for sufficiently large $s>0$. Using (\ref{dQ_eqn}), this yields the bound
$$ |Q'(s)| \leq C e^{-cs} + C \frac{e^{f(s)}}{b(s)^2} \int_{0}^s e^{-f(t)} \left| Q^3(t)- Q(t) \right| \: \mathrm{d}t $$
for some $C>0$. By L'H\^opital's Rule  
$$ \lim_{s\rightarrow \infty} e^{f(s)} \int_{0}^s e^{-f(t)} \left| Q^3(t)- Q(t) \right| \: \mathrm{d}t = \frac{\left| Q_\infty^3 - Q_\infty \right|}{-f'_{\infty}} < \infty,$$
and therefore $\lim_{s\rightarrow \infty} Q' =0$. This concludes the proof.

\end{proof}

For the remainder of the paper we write $Q_{\infty} := \lim_{s\rightarrow\infty} Q$ and $f_{\infty} = \lim_{s\rightarrow\infty} f$.
\begin{lem}
\label{Qinftyn+1}
There are no complete solutions $(f,a,b): [0,\infty) \rightarrow \R^3$ to the soliton equations (\ref{solitoneqn1})-(\ref{solitoneqn3}) with $f''(0) < 0$ and $n + 1 < Q^2_{\infty}<\infty$.
\end{lem}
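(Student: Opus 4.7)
The plan is to argue by contradiction: assume $(f,a,b):[0,\infty)\to\R^3$ is a complete solution with $f''(0)<0$ and $n+1<Q_\infty^2<\infty$, and split on the value of $b_\infty$, which exists by Lemma~\ref{ab_mono}. I will also use that $f'_\infty:=\lim_{s\to\infty}f'$ is finite and strictly negative: Lemma~\ref{f_mono} gives $f'$ strictly decreasing, while combining $\Delta f=-R$ (the trace of the soliton equation) with (\ref{feqn}) yields $|\nabla f|^2=R(0)-R$, so by $R\geq 0$ and Lemma~\ref{R_mono} the limit $f'_\infty$ lies in $(-\sqrt{R(0)},0)$.

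Two of the three possibilities for $b_\infty$ can be closed off by unpacking the proof of Lemma~\ref{criticallemma}. When $b_\infty=0$, the estimate carried out there makes the right-hand side of (\ref{dQ_eqn}) diverge to $+\infty$, contradicting $Q_\infty<\infty$. When $0<b_\infty<\infty$, the explicit limit (\ref{Q_lim}) together with Lemma~\ref{criticallemma}'s conclusion $\lim_{s\to\infty}Q'=0$ forces $Q_\infty^3=Q_\infty$, so $Q_\infty\in\{0,1\}$, contradicting $Q_\infty^2>n+1\geq 2$.

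The substantive new work is eliminating the case $b_\infty=\infty$. The key observation is that, once $Q^2>n+1$, every term on the right-hand side of (\ref{bQ}) is negative. I would pick $s_1$ large enough that, for all $s>s_1$: $Q(s)^2>n+1$; $Q(s)>1$, so $Q'(s)>0$ by the remark after Lemma~\ref{Qkeylemma}; and $-f'(s)>-\tfrac{1}{2}f'_\infty>0$. Because $b'$ changes sign at most once (Lemma~\ref{ab_mono}), the assumption $b_\infty=\infty$ rules out $b$ being non-increasing on any tail, so $b'(s)>0$ for all $s>0$. Rewriting (\ref{bQ}) as
\begin{equation*}
-b'' \;=\; \frac{2(Q^2-n-1)}{b} + \frac{2n (b')^2}{b} + \left(-f' + \tfrac{Q'}{Q}\right) b',
\end{equation*}
every summand on the right is non-negative on $(s_1,\infty)$; hence $b''<0$ there, $b'$ decreases to a non-negative limit, and integration gives
\begin{equation*}
b'(s_1) \;\geq\; \int_{s_1}^{\infty}(-b'')\,ds \;\geq\; \int_{s_1}^{\infty}(-f')\,b'\,ds \;\geq\; -\tfrac{1}{2}f'_\infty\,(b_\infty-b(s_1)),
\end{equation*}
which is finite on the left but $+\infty$ on the right — the desired contradiction.

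The step most warranting care is the global positivity $b'>0$ on $(0,\infty)$ in the $b_\infty=\infty$ scenario, since it is exactly this, combined with Lemma~\ref{ab_mono}'s "at most one sign change" statement, that forces the three summands on the right of the rewritten ODE to be non-negative and allows the divergent integral to deliver the contradiction. Everything else is routine integration of a sign-definite quantity.
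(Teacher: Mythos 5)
Your proof is correct and follows essentially the same route as the paper: both arguments reduce to the case $b_\infty=\infty$ by citing the proof of Lemma~\ref{criticallemma}, and then derive a contradiction by integrating the tail of equation (\ref{bQ}), on which every term is negative once $Q^2>n+1$, $Q'>0$, $f'<0$ and $b'>0$. The only cosmetic difference is which negative term delivers the contradiction --- the paper multiplies $b''<-2c/b$ by $b'$ and integrates to force $(b')^2<0$ in finite distance, while you integrate $-b''\geq(-f')b'$ against the divergence of $\int b'\,ds$ --- but this is the same idea.
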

\begin{proof}
Assume such a solution exists. By the proof of Lemma \ref{criticalLemma} above, we then know that $b_{\infty}= \infty$, which in turn implies that $b$ is monotonically increasing by Lemma \ref{ab_mono}. Furthermore $Q' \geq 0$ and $f' < 0$ by Lemma \ref{QkeyLemma} and Lemma \ref{f_mono}. Choose $s_0, c>0$ such that $Q^2 \geq n + 1 + c$ for $s_0>s$. From (\ref{bQ}) it then follows that
\begin{equation*}
b'' < - \frac{2c}{b} \quad \text{for} \quad s > s_0.
\end{equation*}
Multiplying this inequality by $b'$ and integrating from $s_0$ to $s$, we deduce
\begin{equation*}
b'(s)^2 \leq b'(s_0)^2 - 4c \ln \frac{b(s)}{b(s_0)}.
\end{equation*}
Therefore $b'$ must become negative in finite distance $s$, contradicting the monotonicity of $b$.
\end{proof}

\begin{lem}
\label{Qlim}
Let $(f,a,b): [0,\infty) \rightarrow \R^3$ be a complete solution to the soliton equations (\ref{solitoneqn1})-(\ref{solitoneqn3}) with $Q_{\infty}<\infty$ and $f''(0) < 0$. Then either (i) $\lim_{s \rightarrow \infty} Q = 0$ or (ii) $\lim_{s \rightarrow \infty} Q = 1$.
\end{lem}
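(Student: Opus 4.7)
The plan is to reduce to the case $b_\infty=\infty$, then apply Lemma~\ref{asymptotics} separately to the ODEs for $a$ and $b$ to obtain sharp leading-order asymptotics whose ratio pins down $Q_\infty^2$ as the root of an algebraic identity admitting only $0$ and $1$.

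The cases $b_\infty=0$ and $0<b_\infty<\infty$ were already treated inside the proof of Lemma~\ref{criticallemma}: the first contradicts $Q_\infty<\infty$ and the second yields $Q_\infty\in\{0,1\}$ via the L'H\^opital computation (\ref{Q_lim}). Assuming $b_\infty=\infty$, Lemma~\ref{Qinftyn+1} restricts attention to $Q_\infty^2\leq n+1$; I suppose for contradiction that $Q_\infty\in(0,1)\cup(1,\sqrt{n+1}]$ and derive asymptotics for $a$ and $b$.

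Both (\ref{aQ}) and (\ref{bQ}) match the template of Lemma~\ref{asymptotics}, with $c_1^a(s)=4nQ^4$, $c_2^a(s)=-f'-2nQ'/Q$ for $y=a$ and $c_1^b(s)=4(n+1-Q^2)$, $c_2^b(s)=Q'/Q-f'$ for $y=b$. By Lemmas~\ref{f_mono} and~\ref{criticallemma} these coefficients converge to $c_1^{a,\ast}=4nQ_\infty^4>0$, $c_1^{b,\ast}=4(n+1-Q_\infty^2)\geq 0$, and $c_2^{a,\ast}=c_2^{b,\ast}=-f'_\infty>0$. Lemma~\ref{ab_mono} ensures $a,a',b,b'$ are eventually positive because $Q<\sqrt{n+1}$ in the relevant range. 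After translating the time origin forward so that the uniform bounds $|c_i-c_i^\ast|<\epsilon$ required by Lemma~\ref{asymptotics} hold on $[0,\infty)$, I apply that lemma (respectively Corollary~\ref{asymptotics_cor} for $b$ when $Q_\infty^2=n+1$), and send $\epsilon\to 0$ to upgrade the two-sided bounds to exact limits, obtaining
\begin{equation*}
\frac{a^2(s)}{s}\longrightarrow\gamma_a:=\frac{4nQ_\infty^4}{-f'_\infty}>0,\qquad \frac{b^2(s)}{s}\longrightarrow\gamma_b:=\frac{4(n+1-Q_\infty^2)}{-f'_\infty}.
\end{equation*}

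When $Q_\infty^2<n+1$, dividing gives $Q_\infty^2=\gamma_a/\gamma_b=nQ_\infty^4/(n+1-Q_\infty^2)$, which rearranges to $(n+1)Q_\infty^2(1-Q_\infty^2)=0$ and forces $Q_\infty\in\{0,1\}$, contradicting the hypothesis. When $Q_\infty^2=n+1$, $\gamma_b=0$ but $\gamma_a>0$, so $Q^2\to\infty$, contradicting $Q_\infty<\infty$. The main technical hurdle is the clean application of Lemma~\ref{asymptotics} after the time shift, together with the $\epsilon\to 0$ limit needed to turn its two-sided estimate into an honest asymptotic equivalence.
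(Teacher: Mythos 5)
Your proposal is correct and follows essentially the same route as the paper: after invoking Lemma \ref{Qinftyn+1} and Lemma \ref{criticallemma}, you apply Lemma \ref{asymptotics} (and Corollary \ref{asymptotics_cor} in the borderline case $Q_\infty^2=n+1$) to the rewritten equations (\ref{aQ}) and (\ref{bQ}), and let $\epsilon\to 0$ to force $Q_\infty^2$ to satisfy $nQ_\infty^4=(n+1-Q_\infty^2)Q_\infty^2$. The only cosmetic difference is that the paper keeps the two-sided bounds on $a^2/b^2$ rather than first extracting exact limits of $a^2/s$ and $b^2/s$, which changes nothing substantive.
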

\begin{proof}
We show that $Q_{\infty} \neq 0$ implies that $Q_{\infty} = 1$. Recall that
\begin{itemize}
\item $Q^2_{\infty} \leq n+1$ by Lemma \ref{Qinftyn+1}
\item $\lim_{s \rightarrow \infty} Q' = 0$ by Lemma \ref{criticalLemma}
\item $\lim_{s \rightarrow \infty} f' = f'_{\infty} < 0$ by Lemma \ref{f_mono}
\end{itemize}

We first prove the result for $0 < Q^2_{\infty} < n+1$. Apply Lemma \ref{asymptotics} to (\ref{aQ}) and (\ref{bQ}) to deduce that for sufficiently small $\epsilon >0$ there exists an $s_0> 0$ such that for $s>s_0$
\begin{equation}
\label{eqn1}
\frac{a(s_0)^2 + \gamma_{a,-}\left(1+\epsilon\right)^{-1} \left(s-s_0\right)}{b(s_0)^2 + \gamma_{b,+} \left(s-s_0\right)}
\leq \frac{a^2(s)}{b^2(s)} \leq  \frac{a(s_0)^2 + \gamma_{a,+} \left(s-s_0\right)}{b(s_0)^2 + \gamma_{b,-}\left(1+\epsilon\right)^{-1} \left(s-s_0\right)},  
\end{equation}
where
\begin{align*}
\gamma_{a, \pm} &= \frac{4nQ_{\infty}^4 \pm \epsilon} {-f'_{\infty}\mp \epsilon} \\
\gamma_{b, \pm} &= \frac{4(n+1-Q_{\infty}^2) \pm \epsilon} {-f'_{\infty}\mp \epsilon}.
\end{align*}
Taking the limit of (\ref{eqn1}) as $s \rightarrow \infty$ we obtain 
\begin{equation*}
\frac{\gamma_{a,-}}{\gamma_{b,+}} \leq Q_{\infty}^2 \leq \frac{\gamma_{a,+}}{\gamma_{b,-}}. 
\end{equation*}
Since $\epsilon >0$ can be chosen arbitrarily small, we conclude that $Q_{\infty}$ solves the equation 
\begin{equation*}
\frac{nQ_{\infty}^4}{n+1-Q_{\infty}^2} = Q^2_{\infty},
\end{equation*}
which in the interval $(0,\sqrt{n+1})$ has the unique solution $Q_{\infty}=1$.

It remains to prove the result for $Q^2_{\infty} = n + 1$. Recall that by Lemma \ref{criticalLemma} we have $Q' > 0$ and hence $Q < n + 1$ everywhere in this case. Thus it follows from Corollary \ref{asymptotics_cor} that
\begin{equation*}
Q^2(s) = \frac{a^2(s)}{b^2(s)} \geq \frac{a^2(s_0) + \gamma_{a,-}\left(1+\epsilon\right)^{-1}\left(s-s_0\right) }{b^2(s_0) + \gamma_{b,+}\left(s-s_0\right) }
\end{equation*}
This, however, is a contradiction of $Q^2_{\infty} = n+1$, since $\gamma_{a,-} = O(1)$, $\gamma_{b,+} = O(\epsilon)$, and $\epsilon$ may be chosen arbitrarily small.

\end{proof}

As a corollary of the proof of Lemma \ref{Qlim} above we obtain

\begin{cor}
\label{Qlim1-asymptotic}
Let $(f,a,b): [0,\infty) \rightarrow \R^3$ be a complete solution to the soliton equations (\ref{solitoneqn1})-(\ref{solitoneqn3}) with $Q_\infty = 1$ and $f''(0) < 0$. Then 
\begin{equation*}
a,b \sim \sqrt{\frac{4n}{-f'_\infty}} \sqrt{s} \quad \textrm{as} \quad s \rightarrow \infty.
\end{equation*}
\end{cor}

It remains to study the asymptotics of $a$ and $b$ when $Q_{\infty} = 0$. This is carried out in the following two Lemmas \ref{b-asymp-Q0} and \ref{a-asymp-Q0} below.

\begin{lem}
\label{b-asymp-Q0}
Let $(f,a,b): [0,\infty) \rightarrow \R^3$ be a complete solution to the soliton equations (\ref{solitoneqn1})-(\ref{solitoneqn3}) with $Q_\infty = 0$ and $f''(0) < 0$. Then 
\begin{align}
\label{b_asymp_Q0}
b &\sim b_0 \sqrt{s} \quad \textrm{as} \quad s \rightarrow \infty \\ \nonumber
b' &\sim \frac{b_0}{2\sqrt{s}} \quad \textrm{as} \quad s \rightarrow \infty
\end{align}
where 
\begin{equation*}
b_0 = \sqrt{ \frac{4(n+1)}{-f'_\infty}}
\end{equation*}
\end{lem}

\begin{proof}
Note that we cannot apply Lemma \ref{asymptotics} to equation (\ref{bQ}) to deduce the asymptotics of $b$ as $s \rightarrow \infty$, because a priori we do not know the limit of $\frac{Q'}{Q}$ as $s \rightarrow \infty$ when $Q_\infty = 0$. From the proof below it will follow that the limit is in fact $0$.

Let us first recall that the following properties hold:
\begin{itemize}
\item $Q < 1$ everywhere and for sufficiently large $s$ we have $Q'(s) < 0$. This follows from $\lim_{s \rightarrow \infty} Q = 0$ and Lemma \ref{QkeyLemma}.
\item $a', b' > 0$ for $s >0$. This follows from Lemma \ref{ab_mono}.
\item There exists a $c>0$ such that $f'(s) < -c$ for sufficiently large $s$. Moreover $\lim_{s \rightarrow \infty} f' = f'_\infty < 0$ exists. This follows from Lemma \ref{f_mono}.
\end{itemize}
Inspecting the evolution equation (\ref{aQ}) for $a$, the above properties show that for every $\epsilon >0$ there exists an $s^\ast>0$ such that for $s > s^\ast$
\begin{equation*}
a'' \leq \epsilon - c a'.
\end{equation*} 
This shows that $\lim_{s\rightarrow \infty} a' = 0$ and therefore $\lim_{s \rightarrow \infty} \frac{a'}{a} = 0$. Writing the evolution equation (\ref{solitoneqn3}) of $b$ as
\begin{equation*}
b'' = \frac{2n+2 - 2 Q^2}{b} - (2n-1) \frac{b'^2}{b} + \left(f' - \frac{a'}{a} \right)b'
\end{equation*}
and applying Lemma \ref{asymptotics} we deduce the asymptotics (\ref{b_asymp_Q0}) of $b$ as $s \rightarrow \infty$. From the proof of Lemma \ref{asymptotics}, 
in particular equation (\ref{eqn:asymptotics-derivative}), we also see that 
\begin{equation*}
b' \sim  \frac{b_0}{2 \sqrt{s}} \quad \textrm{as} \quad s \rightarrow \infty. 
\end{equation*}
This concludes the proof.
\end{proof}

\begin{lem}
\label{a-asymp-Q0}
Let $(f,a,b): [0,\infty) \rightarrow \R^3$ be a complete solution to the soliton equations (\ref{solitoneqn1})-(\ref{solitoneqn3}) with $Q_\infty = 0$ and $f''(0) < 0$. Then $a_{\infty} := \lim_{s\rightarrow \infty} a <\infty$ and $a$ is asymptotically constant.
\end{lem}
\begin{proof}
By Lemma \ref{b-asymp-Q0} above it follows that 
\begin{equation*}
\frac{b'}{b} \sim \frac{1}{2s} \quad \textrm{as} \quad s \rightarrow \infty
\end{equation*}
By Lemma \ref{QkeyLemma} we know $Q_{\infty} = 0$ implies $Q'(s) < 0$ for $s$ sufficiently large. From the evolution equation (\ref{Qeqn}) for $Q$
we then deduce that for any $\epsilon > 0$ there exists an $s_0 > 0$ such that for $s > s_0$
\begin{equation}
\label{ineq:Q}
Q'' + c_1 Q' \leq -\frac{c_2}{s} Q,
\end{equation}
where $c_1  = -f'_{\infty} + \epsilon$ and $c_2 = -\frac{1}{2} f'_{\infty} - \epsilon $. 
\vspace{1em}
\begin{claim}
For any $\epsilon >0$ there exist constants $C, s_0 >0$ such that for $s > s_0$
\begin{equation*}
Q(s) \leq C s^{-\frac{1}{2} + \epsilon}.
\end{equation*}
\end{claim}
\begin{claimproof}
Multiplying (\ref{ineq:Q}) by $e^{c_1 s}$ and integrating we obtain
\begin{align*}
Q'(s) &\leq e^{c_1\left(s_0-s\right)} Q'(s_0) - c_2 e^{-c_1 s} \int_{s_0}^s \frac{Q(t)}{t} e^{c_1 t}  \, \mathrm{d}t \\
	  &\leq e^{c_1\left(s_0-s\right)} Q'(s_0) - c_2 \frac{Q(s)}{s} e^{-c_1 s} \int_{s_0}^s  e^{c_1 t} \, \mathrm{d}t \\
	  & \leq e^{c_1(s_0-s)} Q'(s_0) + \left( - \frac{c_2}{c_1} + e^{c_1 \left(s_0 - s\right)} \right) \frac{Q(s)}{s} \\
	  & \leq \left(-\frac{c_2}{c_1}+\epsilon\right) \frac{Q(s)}{s},
\end{align*}
where by choosing $s_0$ sufficiently large we assumed that $Q'(s_0) < 0$ and $e^{c_1 \left(s_0 - s\right)} < \epsilon$.
Integrating this equation from $s_0$ to $s$ shows that 
\begin{equation*}
Q(s) \leq C s^{-\frac{c_2}{c_1} + \epsilon} \quad \text{for} \quad s > s_0,
\end{equation*}
where $C = Q(s_0)s_0^{\frac{c_2}{c_1}-\epsilon}$.
\end{claimproof}

This claim, the evolution equation (\ref{aQ}) of $a$ and the monotonicity properties of $a, b, f$, now imply that for any $\epsilon >0$ 
there exist constants $C_1, C_2, s_0>0$ such that for $s>s_0$
\begin{align*}
a''	&\leq \frac{2n}{a}Q^4 + a'f' \\  
    &\leq  \frac{C_1}{s^{2-\epsilon}} - C_2 a'.
\end{align*}
Integrating this differential inequality we see that
\begin{align}
\label{a-asymp-diff-ineq}
a'(s) &\leq a'(s_0) e^{C_2\left(s_0 - s\right)} + C_1 e^{-C_2s} \int_{s_0}^{s} \frac{e^{C_2 t}}{t^{2-\epsilon}} \: \mathrm{d}t.
\end{align}
An application of L'H\^opital's Rule shows
\begin{equation*}
C_1 e^{-C_2s} \int_{s_0}^{s} \frac{e^{C_2 t}}{t^{2-\epsilon}} \: \mathrm{d}t \sim \frac{C_1}{C_2} \frac{1}{s^{2-\epsilon}} \quad \textrm{as} \quad s \rightarrow \infty.
\end{equation*}
Integrating (\ref{a-asymp-diff-ineq}) we then see that $a$ is bounded.
\end{proof}

Finally, we prove that any complete solution to the soliton equations (\ref{solitoneqn1})-(\ref{solitoneqn3}) satisfies $Q \leq 1$ everywhere.
\begin{lem}
Let $(f,a,b): [0,s_0] \rightarrow \R^3$ be a solution to the soliton equations (\ref{solitoneqn1})-(\ref{solitoneqn3}) such that $Q(s_0) > 1$. Then the maximal extension of the solution $(f,a,b)$ blows up in finite distance $s$.
\end{lem}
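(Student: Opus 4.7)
The plan is to argue by contradiction: suppose the maximal interval of existence is $[0,\infty)$.

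By the remark following Lemma~\ref{Qkeylemma}, the condition $Q(s_0)>1$ implies $Q'(s)>0$ for all $s>s_0$, so $Q$ is strictly increasing past $s_0$ and the limit $Q_\infty:=\lim_{s\to\infty}Q(s)\in(Q(s_0),\infty]$ exists. The proof splits into two cases. If $Q_\infty<\infty$, then $Q_\infty>1$; Lemma~\ref{Qinftyn+1} rules out $Q_\infty^2>n+1$, so $Q_\infty\le\sqrt{n+1}$, and Lemma~\ref{Qlim} then forces $Q_\infty\in\{0,1\}$ (in the case $f''(0)<0$), a contradiction. The degenerate subcase $f''(0)=0$ (where $f\equiv 0$ and the soliton is Ricci-flat) can be handled by a parallel argument built on the first integral $R\equiv 0$.

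The bulk of the work lies in ruling out $Q_\infty=\infty$. Choose $s_2>s_0$ such that $Q(s)^2\ge 2(n+1)$ for $s\ge s_2$. From (\ref{solitoneqn3}),
\begin{equation*}
b''=\frac{2(n+1-Q^2)}{b}-\frac{(2n-1)(b')^2}{b}-\frac{a'b'}{a}+b'f'.
\end{equation*}
If $b'\ge 0$ on $[s_2,\infty)$, the last two terms are non-positive, so $b''\le -2(n+1)/b$; concavity then gives $b(s)\le b(s_2)+b'(s_2)(s-s_2)$, whence $\int_{s_2}^\infty 1/b=\infty$, and integrating the inequality forces $b'(s)\to-\infty$, contradicting $b'\ge 0$. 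Hence by Lemma~\ref{ab_mono} there is $s_3\ge s_2$ with $b'(s)<0$ for all $s\ge s_3$, and $b$ is strictly decreasing on $[s_3,\infty)$.

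The main obstacle is handling this second subcase, in which the terms $-a'b'/a$ and $b'f'$ turn \emph{positive} once $b'<0$, so they cannot simply be dropped. I would exploit the steady-soliton identity $R+|\nabla f|^2\equiv R(0)$, which together with $R\ge 0$ for any complete ancient solution yields $|f'|\le\sqrt{-2f''(0)}$. Combining this with the explicit formula (\ref{Rfirstorder}) for $R$ and the bound $R\le R(0)$ provides an upper bound on $a'/a$ in terms of $Q$, $b$ and $b'$, showing that as $Q\to\infty$ the positive drag terms in the $b$-equation are dominated by the $-2Q^2/b$ contribution. This yields an effective inequality $b''\le -cQ^2/b$ for some $c>0$ and all sufficiently large $s$. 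Multiplying by $b'<0$ and integrating gives a lower bound of the form $(b')^2\ge C\log(1/b)$ with $C\to\infty$, so $b$ reaches $0$ in finite distance via the convergent integral $\int_0^{b(s_3)}db/\sqrt{C\log(b(s_3)/b)}$, contradicting $s_*=\infty$ and giving the desired finite-distance blow-up.
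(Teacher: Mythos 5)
Your first two steps reproduce the paper's: assuming completeness, Lemmas \ref{Qinftyn+1} and \ref{Qlim} force $Q$ to be unbounded, and the inequality $b''\le 2(n+1-Q^2)/b$, valid while $b'\ge 0$, forces $b'$ to turn negative after finite distance (the paper gets this by multiplying by $b'$ and integrating; your concavity argument works equally well). The gap is in your third step. You aim to show $b\to 0$ in finite distance, which requires an \emph{upper} bound on the positive drag terms $\tfrac{a'}{a}|b'|+|f'|\,|b'|$ in (\ref{solitoneqn3}). But the identity you invoke, $R\le R(0)$ applied to (\ref{Rfirstorder}), rearranges (for $b'<0$, $f'\le 0$, after discarding the non-negative terms $2n(2n-1)(b'/b)^2$ and $2|f'|a'/a$) to
\begin{equation*}
4n\Bigl(\frac{a'|b'|}{ab}+|f'|\frac{|b'|}{b}\Bigr)\;\ge\;\frac{2n\bigl(Q^2-(2n+2)\bigr)}{b^2}-R(0),
\end{equation*}
which is a \emph{lower} bound on the drag of order $Q^2/(2b)$: it cannot show the drag is dominated by the $-2Q^2/b$ contribution. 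Nor does $R\le R(0)$ give the claimed upper bound on $a'/a$: the favorable term $2|f'|a'/a$ on the left comes paired with $-4n\,a'|b'|/(ab)$, so $a'$ appears on both sides with no sign control. Using $R\ge 0$ instead does bound $\tfrac{a'|b'|}{a}$ above by $\tfrac{Q^2}{2b}$ plus corrections, but the correction $\tfrac{|f'|}{2n}\tfrac{a'b}{a}$ survives, and $a'/a$ is precisely the quantity that is exploding, so the estimate does not close. It is not even clear that the blow-up occurs through $b\to 0$.

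The paper's resolution is to abandon the $b$-equation at this stage and exploit the sign change in the $a$-equation instead: once $b'<0$, $b$ is decreasing (so $b\le b(s_2)$ on the rest of the maximal interval, by Lemma \ref{ab_mono}) and the cross term $-2n\,a'b'/b$ in (\ref{solitoneqn2}) becomes non-negative, so together with $f'\ge -\sqrt{-2f''(0)}$ one obtains $a''\ge c_1a^3-c_2a'$ with $c_1=2n/b(s_2)^4$ and $c_2=\sqrt{-2f''(0)}$. A phase-plane comparison in the $(a,a')$-plane --- trajectories are eventually trapped above a curve $w=g(z)$ with $\int^{\infty} dz/g(z)<\infty$ --- then forces $a\to\infty$ in finite distance. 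You should replace your third step with an argument of this type; the structural point is that after $b'$ turns negative the signs favor a lower bound in the $a$-equation, whereas they obstruct the upper bound you need in the $b$-equation.
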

\begin{proof}
Let $(f, a, b): [0, s_\infty) \rightarrow \R^3$, $s_\infty \in \R \cup \{ \infty\}$, be the maximal extension of the solution to the soliton equations (\ref{solitoneqn1})-(\ref{solitoneqn3}). Without loss of generality we may assume that $s_\infty = \infty$.

\vspace{1em}
\begin{claim}
There exists an $s_2 > 0$ such that $b' < 0$ for $s > s_2$.
\end{claim}
\vspace{0.5em}
\begin{claimproof}
By Lemma \ref{Qlim} we know that $Q$ is unbounded and hence there exists an $s_1>0$ such that
$$ Q^2 > n+ 2 \quad \text{for} \quad s > s_1.$$
It follows from (\ref{solitoneqn3}) and the monotonicity properties of $a$, $b$ and $f$ that for $s>s_1$ and as long as $b'>0$ we have
\begin{equation*}
b'' < - \frac{2}{b}.
\end{equation*}
Multiplying this equation by $b'$ and integrating, we see that there exists a $s_2 > s_1$ such that $b'(s_2) = 0$. As $Q^2 > n+2$, the proof of Lemma \ref{ab_mono} shows that $s_2$ is a strict maximum of $b$. Moreover, since $b'$ can only change its sign once, $b' < 0$ for $s > s_2$.
\end{claimproof}

This claim, the evolution equation (\ref{solitoneqn2}) of $a$, and the monotonicity properties of $a$ and $f$ show that 
\begin{equation*}
a'' \geq c_1 a^3 - c_2 a' \quad \text{for} \quad s > s_2,
\end{equation*}
where $c_1, c_2>0$ are some positive constants. This differential inequality forces $a$ to blow up in finite distance. We prove this using phase diagrams, as we did in the proof of Lemma \ref{asymptotics}: Write $z = a$ and $w = a'$ to obtain
\begin{align*}
z' &= w \\
w' &\geq c_2 \left( \frac{c_1}{c_2} z^3 - w\right) 
\end{align*}
Recall that $z' = a' > 0$ by Lemma \ref{ab_mono}. Therefore we can take $z$ to be the independent variable and deduce
\begin{align*}
\frac{dw}{dz} \geq c_2 \left( \frac{c_1}{c_2} \frac{z^3}{w} - 1\right).
\end{align*}
Set
\begin{equation*}
g(z) = \frac{c_1}{c_2} \frac{z^3}{z^{\frac{3}{2}}+1}
\end{equation*}
and consider the regions
\begin{align*}
 R_+&: w > g(z) \\
 R_-&: w < g(z)
 \end{align*}
 in the first quadrant $w, z > 0$ (where we take $w$ to be the $y$-axis and $z$ to be the $x$-axis). If $(z, w) \in R_-$ then
\begin{equation*}
\frac{dw}{dz} \geq c_2 z^{\frac{3}{2}}.
\end{equation*}
Integrating this differential inequality we see that $w(z)$ crosses over to the region $R_+$ in finite $z$. Notice that
on the curve $w = g(z)$ and for sufficiently large $z$ we have $\frac{dw}{dz} > g'(z)$. Thus $w(z)$ eventually remains in $R_+$. Switching back to the independent variable $s$ we see that for sufficiently large $s$
\begin{equation*}
a' \geq g(a).
\end{equation*}
Integrating shows that $a$ blows up in finite distance $s$.
\end{proof}

Theorem \ref{thm-asymptotics} follows from the lemmas above.

\section{Existence of non-collapsed complete solitons}
\label{section_existence}
So far we have only shown the existence of gradient steady solitons with $Q_{\infty} = 0$ (see Theorem \ref{thm-collapsed} and Remark \ref{rmk-collapsed}). These solitons are collapsed and therefore cannot occur as blowup limits of Ricci flow. In this section we construct a complete \emph{non-collapsed} steady soliton with $Q_{\infty} =1$ for $k>p$.

We begin by defining 
\begin{align*}
f^{\ast}_0 = \sup \{ f_0 \in \mathbb{R}\; |\; \text{for } f''(0) \leq f_0 \text{ a complete Ricci soliton exists} \}
\end{align*}
and noting that $f^{\ast}_0 > -\infty$ by Theorem \ref{thm-collapsed}. Recall that by Theorem \ref{thm-asymptotics} a complete solution to the soliton equations (\ref{solitoneqn1})-(\ref{solitoneqn3}) with $f''(0) < f^{\ast}_0$ satisfies $Q \leq 1$ everywhere. Below we show that $f^{\ast}_0 < 0$ for solitons with boundary conditions $a'(0) = \frac{k}{p}(n+1) > n+1$ and then argue that choosing $f''(0) = f^{\ast}_0$ leads to a complete steady gradient Ricci soliton with $Q_\infty =1$. Via the asymptotics for $a$ and $b$ when $Q_\infty=1$, as stated in Theorem \ref{thm-asymptotics}, it is straightforward to verify that such solitons are non-collapsed. 

\begin{lem}
\label{intersect}
Let $(f,a,b): [0, s_\infty) \rightarrow \R^3, s_\infty \in \R \cup \{ \infty \},$ be a maximal solution to the soliton equations (\ref{solitoneqn1})-(\ref{solitoneqn3}) with initial conditions $a'(0)>n+1$ and $f''(0) = 0$. Then $Q>1$ in finite distance $s$.
\end{lem}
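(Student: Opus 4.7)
The plan is to reduce to the Ricci-flat setting, derive a conservation law, and combine it with a monotone quantity that encodes the hypothesis $a'(0)>n+1$ to contradict $Q\leq 1$.

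First, Lemma \ref{f_mono}(1) gives $f\equiv 0$, so the soliton is Ricci-flat. Substituting $f\equiv 0$ into (\ref{solitoneqn1}) gives $a''/a+2nb''/b=0$; eliminating $a''$, $b''$ via (\ref{solitoneqn2})-(\ref{solitoneqn3}) yields the Hamiltonian constraint (equivalent, by (\ref{Rfirstorder}), to $R\equiv 0$)
\[
Q^2+\frac{2a'b'}{a}+(2n-1)(b')^2=2(n+1).
\]
A short computation with (\ref{solitoneqn2}) and $f'=0$ also yields the identity
\[
(a'\,b^{2n})'=2n\,a^3\,b^{2n-4}\geq 0,
\]
so $a'b^{2n}$ is non-decreasing, whence $a'(s)b(s)^{2n}\geq a'(0)>n+1$ for all $s\geq 0$.

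Now suppose for contradiction that $Q\leq 1$ on the maximal interval $I$. Since $Q<\sqrt{n+1}$, Lemma \ref{ab_mono} gives $a,b$ increasing, and the constraint bounds $(b')^2\leq 2(n+1)/(2n-1)$. Combined with $a\leq b$, this yields linear growth, so Picard--Lindel\"of extends the solution to $I=[0,\infty)$ as in Lemma \ref{Qcontrol}. By Lemma \ref{Qkeylemma}, $Q_\infty:=\lim_{s\to\infty}Q$ exists in $[0,1]$. Applying Lemma \ref{asymptotics} to (\ref{bQ}) with $f'=0$ yields $b\sim\beta s$ and $b'\to\beta$, where $\beta:=\sqrt{(2(n+1)-Q_\infty^2)/(2n-1)}>0$.

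The contradiction is then extracted by case analysis on $Q_\infty$. For $Q_\infty\in(0,1)$, integrating $(b^{2n+1}Q')'=(2n+2)b^{2n-1}(Q^3-Q)$ along the lines of the proof of Lemma \ref{criticallemma} forces $Q'(s)\sim -C/s$ for some $C>0$, so $Q\to-\infty$, a contradiction. For $Q_\infty=1$, the asymptotic $a'\to \beta$ (from $Qb'\to\beta$ and $Q'b=o(1)$) together with $(a'b^{2n})'=2na^3b^{2n-4}\sim 2n\beta^{2n-1}s^{2n-1}$ forces the matching relation $\beta^2=1$, which contradicts $\beta^2=(2n+1)/(2n-1)>1$. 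Finally, in the case $Q_\infty=0$ (where $Q$ has an interior maximum in $(0,1)$ and decays), the bound $a'(s)\geq a'(0)/b(s)^{2n}$ integrated against $b\sim \beta s$ forces $a(s)$ to be bounded below by a constant proportional to $a'(0)$, which for $a'(0)>n+1$ is incompatible with the decay $a=Qb$ dictated by $Q\to 0$ and $b\to\infty$. The main obstacle is the boundary case $Q_\infty=1$, where the leading-order asymptotics agree and the contradiction must be read off from the sub-leading rate captured by the monotone quantity $a'b^{2n}$.
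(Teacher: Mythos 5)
Your reduction to the Ricci-flat case, the conservation law coming from $R\equiv 0$, and the monotone quantity $(a'b^{2n})'=2na^3b^{2n-4}$ are all correct ideas (up to a typo: the cross term in the constraint should be $\tfrac{2a'b'b}{a}$, not $\tfrac{2a'b'}{a}$), and this is a genuinely different route from the paper, which instead follows Page--Pope: after the change of variables $\tfrac{dr}{ds}=\tfrac{1}{p}$ and the gauge $ap=L$ the Ricci-flat system integrates in closed form, $b^2=L^2-r^2$ and $a^2$ given by an explicit integral, and one simply reads off $a^2=(2n+2)L^2>L^2=b^2$ at $r=0$, which is reached at finite distance; the hypothesis $a'(0)>n+1$ enters through the reality of $L=a'(0)\bigl(a'(0)^2-(n+1)^2\bigr)^{-1/2}$.

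However, your contradiction argument has genuine gaps in two of the three cases. First, Lemma \ref{asymptotics} cannot be invoked here: with $f'\equiv 0$ the damping coefficient in (\ref{bQ}) is $c_2=-Q'/Q\to 0$, violating the hypothesis $c_2^{\ast}>\epsilon$, and in any event that lemma concludes $y\sim\sqrt{\gamma s}$, not $y\sim\beta s$. Second, and more seriously, in the critical case $Q_\infty=1$ your value $\beta^2=(2n+1)/(2n-1)$ is obtained by dropping the cross term from the constraint; keeping it, the constraint with $a'\to\beta$, $b'\to\beta$, $b/a\to 1$ reads $(2n+1)\beta^2=2n+1$, i.e.\ $\beta^2=1$, which is exactly your matching relation --- so no contradiction results. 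Indeed your argument in this case never uses $a'(0)>n+1$, and it would therefore also "rule out" the complete Calabi/Eguchi--Hanson metrics at $a'(0)=n+1$, which exist and have precisely the asymptotics $a\sim b\sim s$, $Q\to 1$. Third, in the case $Q_\infty=0$ the conclusion "$a$ bounded below by a constant proportional to $a'(0)$" is fully compatible with $Q=a/b\to 0$ once $b\to\infty$ (compare the Taub-bolt metrics), so no contradiction follows there either. Only the case $Q_\infty\in(0,1)$, where $(b^{2n+1}Q')'$ is negative and bounded away from zero against polynomial weights, actually closes.
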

\begin{proof}
Recall that $f = 0$ everywhere by Lemma \ref{f_mono}. By a change of variable of the form 
$$\frac{dr}{ds} = \frac{1}{p(r)}$$ 
for $p: (0, \infty) \rightarrow \R$ some positive function, the soliton equations (\ref{solitoneqn1})-(\ref{solitoneqn3}) can be written as
\begin{align}
\label{einstein1} 0 &= \frac{1}{a} \left(\frac{a'}{p}\right)' + 2n \frac{1}{b} \left(\frac{b'}{p}\right)' \\
\label{einstein2} \frac{1}{p} \left(\frac{a'}{p}\right)' &= 2n\left(\frac{a^3}{b^4} - \frac{a'b'}{bp^2}\right) \\
\label{einstein3} \frac{1}{p} \left(\frac{b'}{p}\right)' &= \frac{2n+2}{b}-2\frac{a^2}{b^3} - \frac{a'b'}{ap^2} - (2n-1)\frac{1}{b}\left(\frac{b'}{p}\right)^2.
\end{align}
Here $a,b,f,p$ are viewed as functions of $r$ and $'$ denotes differentiation with respect to $r$. These equations can be solved explicitly by taking the gauge 
\begin{equation*}
ap = L,
\end{equation*} 
for $L > 0$ a constant (see \cite{PP87}). Eliminating the term 
\begin{equation*}
\frac{1}{p} \left(\frac{a'}{p}\right)'
\end{equation*} 
in equation (\ref{einstein2}) by the expression obtained for it from (\ref{einstein1}) we deduce that
\begin{equation*}
b'' = - \frac{L^2}{b^3}.
\end{equation*}
One can check that this equation is solved by
\begin{equation}
\label{exp1}
b^2 = L^2-r^2.
\end{equation}
Substituting (\ref{exp1}) into (\ref{einstein3}) and applying the gauge condition $pa = L$ we obtain the first order equation 
\begin{equation*}
\left[\frac{a^2(r^2-L^2)^n}{(2n+2)L^2r} \right]' = - \frac{(r^2-L^2)^n}{r^2}.
\end{equation*}
Integrating yields an explicit solution of the form
\begin{align*}
a^2 &= -(2n+2)L^2r(r^2-L^2)^{-n}\int_{r_b}^r \frac{(s^2-L^2)^n}{s^2} \, \mathrm{d}s \\
b^2 &=(L^2-r^2) \\
p^2 &= \frac{L^2}{a^2},
\end{align*}
where $ -L < r_b < 0$ is some constant. A computation shows
\begin{align*}
\frac{da}{ds}\Big|_{s=0} &= \frac{1}{p(r)} \frac{da}{dr}\Big|_{r=r_b} = \frac{1}{2L} \frac{da^2}{dr}\Big|_{r=r_b} = -\frac{(n+1)L}{r_b} \\
b|_{s=0} &= (L^2 - r_b^2).
\end{align*}
Therefore taking 
\begin{align*}
L &= a'(0)\left( a'(0)^2- (n+1)^2 \right)^{-\frac{1}{2}}\\
r_b &= -\frac{(n+1)L}{a'(0)}
\end{align*}
we see that the solution satisfies the initial conditions $\frac{da}{ds} \big|_{s=0} = a'(0)$ and $b\big|_{s=0} = 1$. Taking the limit $r \rightarrow 0_-$ shows that 
\begin{align*}
a^2 = (2n+2) L^2 > L^2 = b^2
\end{align*}
at $r=0$, which proves the desired result.
\end{proof}

We now prove the existence of a non-collapsed steady Ricci soliton.

\begin{thm}
Let $(\hat{M}, J, \hat{g})$ be a K\"ahler-Einstein manifold of positive scalar curvature. Whenever $k > p(\hat{M}, \omega)$ there exists a non-collapsed steady gradient Ricci soliton on $L_{k}$ with $\lim_{s\rightarrow\infty} Q = 1$.
\end{thm}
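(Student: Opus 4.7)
The plan is a shooting argument in $f_0 := f''(0) \leq 0$ centered on the critical value $f_0^*$ defined at the start of this section. The ingredients are Theorem \ref{thm-collapsed} (complete, collapsed solitons exist for $f_0$ sufficiently negative), Lemma \ref{intersect} (at $f_0 = 0$ the solution develops $Q > 1$ in finite distance, since $a'(0) = (n+1)k/p > n+1$ under the hypothesis $k > p$), Theorem \ref{thm-asymptotics} (every complete solution obeys $Q \leq 1$ globally and $Q_\infty \in \{0, 1\}$), and Theorem \ref{analiticity} (analytic dependence on $f_0$). I expect $f_0^*$ to sit at the boundary between the collapsed family ($f_0 < f_0^*$) and the family of incomplete solutions ($f_0 > f_0^*$), and the critical solution to satisfy $Q_\infty = 1$.

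To show $f_0^* < 0$: the set of $f_0$ for which $Q$ strictly exceeds $1$ at some finite $s$ is open in $f_0$ by continuous dependence and contains $f_0 = 0$ by Lemma \ref{intersect}; by the last clause of Theorem \ref{thm-asymptotics}, no such $f_0$ yields a complete solution. Hence there is $\delta > 0$ with no complete solution for $f_0 \in (-\delta, 0]$, so $f_0^* \leq -\delta < 0$.

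Let $(f^*, a^*, b^*)$ denote the solution at $f_0 = f_0^*$. I would first show it is complete: pick a sequence $f_0^j \nearrow f_0^*$ along which solutions are complete; if the maximal interval of the critical solution were some bounded $[0, T)$, continuous dependence would force $Q^* \leq 1$ on $[0, T)$, and then Lemma \ref{Qcontrol} together with Lemmas \ref{ab_mono} and \ref{f_mono} would give uniform control on $a^*, b^*, f^*$ and their derivatives there, contradicting maximality of $T$ via Picard--Lindel\"of. Hence $T = \infty$ and then $Q^*_\infty \in \{0, 1\}$ by Theorem \ref{thm-asymptotics}. Suppose for contradiction $Q^*_\infty = 0$: choose $s_0$ large so that $Q^*(s_0)$ is as small as we wish and $(f^*)'(s_0)$ is bounded above by a fixed negative constant, use continuous dependence to see that for $f_0$ slightly greater than $f_0^*$ the corresponding solution has nearly identical data at $s_0$, and rerun the argument in the proof of Theorem \ref{thm-collapsed} starting from $s_0$ rather than $0$ (the decay of $f'$ propagates smallness of $Q$ via (\ref{Qineq})) to extend it to a complete soliton with $Q \leq 1$. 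This contradicts the supremum property of $f_0^*$, so $Q^*_\infty = 1$. The asymptotics $a \sim b \sim C\sqrt{s}$ follow from Theorem \ref{thm-asymptotics}(2), and non-collapsedness follows from the resulting Euclidean-type volume growth.

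The main obstacle is the exclusion of $Q^*_\infty = 0$. It requires a genuinely uniform version of the proof of Theorem \ref{thm-collapsed}, so that the bounds used there remain valid both when the starting point is shifted from $0$ to some $s_0 \gg 1$ and when the parameter $f_0$ is perturbed slightly above $f_0^*$. Once this uniformity is established, the rest of the argument is a standard shooting construction.
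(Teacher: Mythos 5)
Your outline coincides with the paper's proof in all but one step: the same shooting parameter $f_0^* = \sup\{f_0 : f''(0)\le f_0 \text{ gives a complete soliton}\}$, the same use of Lemma \ref{intersect} plus openness to get $f_0^*<0$, the same openness/limit argument for completeness of the critical solution, and the same reduction of non-collapsedness to the $\sqrt{s}$-asymptotics. The one place you diverge is exactly the step you flag as the ``main obstacle,'' namely ruling out $Q_\infty=0$ for the critical solution, and there your proposal is left genuinely incomplete: you propose to restart the quantitative machinery of Theorem \ref{thm-collapsed} from a large $s_0$ and propagate smallness of $Q$ via (\ref{Qineq}), but you do not establish the uniform control (in particular on $Q'(s_0)$ and on the perturbed solution beyond $s_0$) that this requires. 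That route can in fact be completed --- Lemma \ref{criticallemma} gives $Q'\to 0$ for the critical solution, and the exponential decay $[\ln Q']'\le f'\le -c$ valid on any interval where $0\le Q\le 1$, $Q'>0$ then bounds the total future increase of $Q$ by an arbitrarily small amount --- but the paper avoids the issue entirely with a purely qualitative argument. If $Q_\infty=0$, then since $Q(0)=0$ and $Q>0$ for $s>0$, $Q$ attains an interior local maximum $Q_{\max}=Q(s_*)<1$ at some finite $s_*$ (it cannot equal $1$, since $Q\equiv 1$ solves (\ref{Qeqn})). Continuous dependence on a single compact interval $[0,s_{**}]$ with $s_{**}>s_*$ shows that for $f''(0)=f_0^*+\epsilon<0$ the perturbed $Q_\epsilon$ still has a local maximum below $1$; and Lemma \ref{Qkeylemma} alone then forces $Q_\epsilon<1$ on the entire maximal extension (a decreasing $Q_\epsilon$ below $1$ can never turn around, since any critical point with $0<Q<1$ is a strict local maximum), so Lemma \ref{Qcontrol} yields a complete soliton for $f_0^*+\epsilon$, contradicting the definition of $f_0^*$. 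No asymptotic decay estimate, no choice of large $s_0$, and no uniformity beyond continuous dependence on a fixed compact interval is needed. You should either adopt this argument or actually carry out the uniform estimate you postulate; as written, the central step of the theorem is asserted rather than proved.
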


\begin{proof}
First note that from the boundary condition $a'(0) = (n+1)\frac{k}{p} > n+1$ (see (\ref{boundaryconditions})) and Lemma \ref{intersect} it follows that $f^{\ast}_{0} < 0$. We proceed by proving
\vspace{1em}
\begin{claim}
 $f''(0)= f^{\ast}_0$ gives rise to a complete solution $(f, a, b): [0, \infty) \rightarrow \R^3$. 
\end{claim}
\vspace{0.5em}
\begin{claimproof}
We argue by contradiction and assume the contrary. By Lemma \ref{Qcontrol} it then follows that $Q$ becomes larger than $1$ after some finite distance $s$. By the continuous dependence on the initial condition $f''(0)$, however, the set
\begin{equation*}
\{f''(0) \in \mathbb{R} \;| \; Q > 1 \text{ after finite distance $s$} \}
\end{equation*}
is open. This contradicts the definition of $f^{\ast}_0$.
\end{claimproof}

 We now show that this solution satisfies $\lim_{s\rightarrow \infty} Q = 1$. Again we argue by contradiction and assume this were not the case. Then $\lim_{s\rightarrow \infty} Q = 0$ by Lemma \ref{Qlim}. From the monotonicity properties of $Q$ stated in Lemma \ref{QkeyLemma} it thus follows that there exists a unique $s_{\ast}>0$ at which $Q$ attains its maximum 
 $$Q_{max} := \max_{s\in[0,\infty)} Q = Q(s_{\ast}) < 1.$$ 
 Note that we cannot have $Q_{max} = 1$, as otherwise standard uniqueness results for ODEs would
 imply that $a = b$ everywhere. Fix an $s_{\ast\ast}> s_{\ast}$. 
 By the continuous dependence of the solution $(f,a,b)$ on $f''(0)$, we can find an $\epsilon_0 > 0$ 
 such that for all $\epsilon<\epsilon_0$ 
\begin{enumerate}
 \item a solution $(f_{\epsilon}, a_{\epsilon},b_{\epsilon}): [0,s_{\ast\ast}] \rightarrow \R^3$ 
 with $f_{\epsilon}''(0) = f^{\ast}_0 + \epsilon <0$ exists 

 \item $Q_{\epsilon} := \frac{a_{\epsilon}}{b_{\epsilon}}$ obtains 
 a local maximum $Q_{max,\epsilon} < 1$ at some $s_{\ast, \epsilon} \in (0,s_{\ast\ast})$. 
\end{enumerate}

From the monotonicity properties of $Q$ stated in Lemma \ref{QkeyLemma}, we deduce 
that $Q_{\epsilon} < 1$ on the maximal extensions of the solutions $(f_{\epsilon}, a_{\epsilon},b_{\epsilon})$, $\epsilon < \epsilon_0$. 
This, however, implies by Lemma \ref{Qcontrol} that $(f_{\epsilon}, a_{\epsilon},b_{\epsilon})$, $\epsilon < \epsilon_0,$ 
can be extended to complete solutions of the soliton equations (\ref{solitoneqn1})-(\ref{solitoneqn3}), contradicting the definition of $f^{\ast}_0$.
 
\end{proof}
The theorem above, in conjunction with Theorem \ref{asymptotics}, concludes our proof of the main Theorem \ref{main-thm}.

\section{Taub-Nut like solitons and the Bryant soliton}
As pointed out in the introduction, the completion of the warped product metric (\ref{warped}) on $\R_{>0}\times S^{2n+1}$ can be viewed as a metric on $\R^{2n+2}$, if at $s=0$ we choose the boundary conditions
\begin{align} 
\label{Rnboundary}
a&=b=0 \\ \nonumber
a'&= b'=1.
\end{align}
With these boundary conditions we see that the metric behaves like
\begin{equation*}
 g \sim ds^2 + s^2 g_{S^{2n+1}} \quad \text{as} \quad s \rightarrow 0.
\end{equation*}
To ensure smoothness at $s=0$ it is sufficient to require $a(s)$ and $b(s)$ to be extendable to smooth odd functions around $s=0$. Note that when $a=b$ everywhere, we obtain a rotationally symmetric metric 
\begin{equation}
\label{radsymmetric}
g = ds^2 + a(s)^2 g_{S^{2n+1}}
\end{equation}
and the soliton equations $(\ref{solitoneqn1})-(\ref{solitoneqn3})$ reduce to 
\begin{align}
\label{bryant1}
f'' &= (2n+1)\frac{a''}{a} \\
\label{bryant2}
a''&= \frac{2n}{a}\left(1-(a')^2\right) + a'f'.
\end{align}
These are the equations of a rotationally symmetric gradient steady soliton on $\R^{2n+2}$. This fact is exploited in Theorem \ref{r2n+2-thm} below, where we give another proof of the existence of the Bryant soliton in even dimensions greater than four.  

%Furthermore, if we take $d=2n+2$ and allow $n \in \{ \frac{k}{2} \; \big | \; k \in \N \}$ to take half-integer values in (\ref{bryant1}) and (\ref{bryant2}), we obtain the rotationally symmetric soliton equations on $\R^d$, $d\geq 3$.  It will turn out that our proof of the existence of the Bryant soliton in Theorem \ref{r2n+2-thm} carries over word by word to the odd dimensional case as well.

With boundary conditions (\ref{Rnboundary}) the soliton equations (\ref{solitoneqn1})-(\ref{solitoneqn3}) are, as previously, degenerate at $s=0$. Fortunately though, the proof of Theorem \ref{analiticity} carries over; it is straightforward to show that for every $a_0, b_0 \in \R$ there exists a unique analytic solution around $s=0$ satisfying $a'''(0)=a_0$ and $b'''(0)=b_0$. Moreover, the solution depends smoothly on $a_0$ and $b_0$.

Applying L'H\^opital's rule to equation (\ref{solitoneqn1}) we see that
\begin{equation*}
f''(0) = a'''(0) + 2nb'''(0)
\end{equation*}
and from (\ref{Rfirstorder}) it follows that 
$$R(0) = -2(n+1)f''(0).$$ 
Since $R \geq 0$ for any ancient solution to the Ricci flow it follows that $a'''(0) + 2nb'''(0) \leq 0$. From here on our previous results carry over word by word or with slight modifications, allowing us to prove the following theorem with little extra effort:

\begin{thm}
\label{r2n+2-thm}
Let $a_0 \leq b_0$ such that $f_0 = a_0 + 2n b_0 \leq 0$. Then there exists a complete solution $(f,a,b): [0,\infty) \rightarrow \R^3$ to the soliton equations (\ref{solitoneqn1})-(\ref{solitoneqn3}) with initial conditions $a=b=f=0$, $a'=b'=1$, $f'=0$, $f'' = f_0$, $a''' = a_0$ and $b''' = b_0$ at $s=0$. Furthermore there are three cases:
\begin{enumerate}
\item If $a_0 +2n b_0 = 0$ and $a_0 = b_0$, we obtain the standard Euclidean metric.
\item If $a_0 +2n b_0 = 0$ and $a_0 < b_0$, we obtain a Taub-Nut metric with asymptotics $a \sim const$ and $b \sim s$.
\item If $a_0 +2n b_0 < 0$ and $a_0 = b_0$, we obtain the Bryant soliton with asymptotics $a = b \sim const \sqrt{s}$.
\item If $a_0 +2n b_0 < 0$ and $a_0 < b_0$, we obtain a Taub-Nut like Ricci soliton with asymptotics $a \sim const$ and $b \sim const \sqrt{s}$.
\end{enumerate}
\end{thm}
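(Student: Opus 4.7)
The plan is to adapt the machinery of Sections 2--6 to the initial conditions (\ref{Rnboundary}) and to read off the four cases from the signs of $f_0$ and of $b_0-a_0$. First I would establish local analytic existence around $s=0$ via the modification of theorem \ref{analiticity} indicated in the paragraph preceding the theorem: the free parameters are $a_0=a'''(0)$ and $b_0=b'''(0)$, and the solution depends smoothly on both. Taylor-expanding at the origin gives
\[
a(s)=s+\tfrac{a_0}{6}s^3+O(s^5),\quad b(s)=s+\tfrac{b_0}{6}s^3+O(s^5),\quad f'(s)=(a_0+2nb_0)s+O(s^3),
\]
so $Q(0)=1$, $Q'(0)=0$, and $Q''(0)=\tfrac{a_0-b_0}{3}\leq 0$. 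With only trivial bookkeeping changes, lemmas \ref{ab_mono}, \ref{f_mono}, \ref{R_mono}, \ref{Qcontrol}, and \ref{Qkeylemma} remain valid under the new boundary data.

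Next I would treat the degenerate sub-case $a_0=b_0$ by uniqueness. When $a_0=b_0$ the ansatz $a\equiv b$ is compatible with the initial data, and the full system reduces to the rotationally symmetric equations (\ref{bryant1})--(\ref{bryant2}). If additionally $a_0+2nb_0=0$ then $a_0=b_0=0$, and one checks that $a(s)=b(s)=s$, $f\equiv 0$ solves everything, giving the Euclidean metric of case (1). If $a_0+2nb_0<0$, the reduced system is exactly the ODE for a rotationally symmetric steady soliton on $\mathbb{R}^{2n+2}$; global existence together with the asymptotics $a=b\sim C\sqrt{s}$ follows by replaying the arguments of Sections 5 and 6 for the single unknown $a$ (or by invoking the classical Bryant construction \cite{B05}), giving case (3).

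For the remaining cases I assume $a_0<b_0$. Then $Q$ is strictly decreasing on a right-neighborhood of $0$ since $Q''(0)<0$, and by lemma \ref{Qkeylemma} it cannot attain a local maximum in $(0,1)$; hence $Q<1$ on the whole interval of existence. Since $Q<\sqrt{n+1}$ throughout, the analogue of lemma \ref{Qcontrol} yields a solution on all of $[0,\infty)$, and the limit $Q_\infty=\lim_{s\to\infty}Q$ exists by lemma \ref{criticallemma}. Because $Q$ is bounded strictly below $1$ past any fixed $s>0$, the dichotomy of theorem \ref{thm-asymptotics} forces $Q_\infty=0$.

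The final step is to pin down the precise asymptotics of $a$ and $b$, which distinguish cases (2) and (4). When $f_0=0$ the analogue of lemma \ref{f_mono} forces $f\equiv 0$, so the metric is Ricci-flat and the explicit Page--Pope integration already carried out inside lemma \ref{intersect} applies verbatim; matching the boundary conditions (\ref{Rnboundary}) produces the Taub-Nut metric with $a\to$ const and $b\sim s$, establishing case (2). When $f_0<0$, lemma \ref{f_mono} gives $f'_\infty<0$, and lemma \ref{asymptotics} applied to (\ref{bQ}) with $Q_\infty=0$ yields $b\sim C\sqrt{s}$, while lemma \ref{aasympQ0} produces $a\to$ const, establishing case (4). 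The main delicate point is the Taub-Nut asymptotics in case (2): the linearized damping used in lemma \ref{asymptotics} degenerates when $f'\equiv 0$, so the faster growth $b\sim s$ (rather than $b\sim\sqrt{s}$) must be read off from the explicit Ricci-flat reduction rather than extracted from the soliton asymptotic machinery of Section 6.
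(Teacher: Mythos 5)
Your proposal is essentially correct and, for cases (1), (2) and (4), follows the same route as the paper: the paper likewise computes $\lim_{s\to 0}Q=1$, $\lim_{s\to 0}Q'=0$ and $\lim_{s\to 0}Q''$ proportional to $a_0-b_0$, propagates $Q'<0$ in case (4) via lemma \ref{Qkeylemma}, gets completeness from lemma \ref{Qcontrol}, and reads off the asymptotics from lemmas \ref{Qlim}, \ref{aasympQ0} and the asymptotics lemma; cases (1)--(2) are dispatched through $f\equiv 0$ exactly as you do. The genuine divergence is case (3). You reduce to the rotationally symmetric system by local uniqueness ($a\equiv b$ is consistent with the full system and matches the data, so the unique analytic solution must satisfy it) and then either replay the global theory for the single unknown or cite Bryant. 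The paper instead runs a two-sided perturbation argument in the parameter $a'''(0)$: if $Q>1$ at some $s_0$, the perturbed solution with $a'''(0)=a_0-\epsilon$ still has $Q_\epsilon(s_0)>1$ while case (4) forces $Q_\epsilon<1$, a contradiction; if $Q<1$ at some $s_0$, the perturbation $a_0+\epsilon$ gives $Q_\epsilon''(0)>0$, hence $Q_\epsilon>1$ for all $s>0$ by lemma \ref{Qkeylemma}, again a contradiction; so $Q\equiv 1$ and completeness follows from lemma \ref{Qcontrol}. Your route is shorter, but if you invoke Bryant's classical construction you forfeit the paper's stated aim of giving a \emph{new}, self-contained existence proof of the Bryant soliton. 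If you instead complete the reduced argument yourself, note that global existence is immediate from $Q\equiv 1<\sqrt{n+1}$ together with (the adapted) lemma \ref{Qcontrol}, so ``replaying Sections 5 and 6'' is really only needed for the $\sqrt{s}$ asymptotics; stated that way your version is arguably cleaner than the paper's, at the cost of not reproving Bryant's existence from scratch.

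Two smaller points. The explicit Page--Pope integration in lemma \ref{intersect} does not apply ``verbatim'' to case (2): it is normalized to $a(0)=0$, $b(0)=1$, and the gauge $ap=L$ degenerates when $a(0)=b(0)=0$; the integration constants must be re-matched (the formula $\frac{da}{ds}\big|_{s=0}=-\frac{(n+1)L}{r_b}$ gives $n+1$, not $1$, at the value $r_b=-L$ where $b$ vanishes), so the Taub--NUT solution is a different member of the Ricci-flat family --- the paper simply identifies it with the metrics of \cite{AG03}. Your closing observation that the $b\sim s$ growth in case (2) cannot be extracted from lemma \ref{asymptotics} because the damping degenerates when $f'\equiv 0$ is correct and important. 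Finally, your normalization $Q''(0)=\frac{a_0-b_0}{3}$ is the correct one from the Taylor expansion (the paper records $a_0-b_0$); only the sign matters for the argument.
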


\begin{proof}
As explained at the beginning of this section there exists an analytical solution around $s=0$ to the soliton equations (\ref{solitoneqn1})-(\ref{solitoneqn3}) with boundary conditions (\ref{Rnboundary}). Below we show that in each of the cases this local solution can be extended to a complete solution. 

In cases (1) and (2) we have $f''(0) = 0$ and hence $f\equiv0$ everywhere by Lemma \ref{f_mono}. It easily seen that $a = b = s$ is the unique solution in case (1) and that it corresponds to the standard Euclidean metric on $\R^{2n+2}$. In case (2) we obtain the Taub-Nut metrics as derived in \cite{AG03}. 

For the remaining cases note that by L'H\^opital's rule 
\begin{align*}
\lim_{s \rightarrow 0} Q  &= 1 \\
\lim_{s \rightarrow 0} Q'  &= 0 \\
\lim_{s \rightarrow 0} Q''  &= a'''(0) - b'''(0)
\end{align*}
Hence in case (4) we see that $Q'<0$ for small $s>0$. By Lemma \ref{QkeyLemma} it follows that $Q'<0$ for as long as the solution exists and thus by Lemma \ref{Qcontrol} we obtain a complete Ricci soliton $(f,a,b): [0,\infty) \rightarrow \R^3$. From Lemma \ref{Qlim} it follows that $\lim_{s\rightarrow\infty} Q = 0$ and therefore $a \sim const$ and $b\sim const \sqrt{s}$ as $s\rightarrow \infty$ by Lemma \ref{a-asymp-Q0} and Lemma \ref{b-asymp-Q0}.

For case (3) we need to prove that $Q=1$ everywhere. We argue by contradiction. Assume there exists an $s_0 >0$ such that $Q(s_0) > 1$. By the continuous dependence on boundary conditions, we can pick an $\epsilon>0$ sufficiently small such that the solution $(f_{\epsilon}, a_{\epsilon}, b_{\epsilon}): [0,s_0] \rightarrow \R^3$ satisfying boundary conditions $a'''(0) = a_0 - \epsilon$ and $b'''(0) = b_0$ exists and $Q_{\epsilon}(s_0):=\frac{a_{\epsilon}(s_0)}{b_{\epsilon}(s_0)} > 1$. This, however, contradicts that $Q' \leq 0$ and $Q \leq 1$ everywhere in case (4). Therefore $Q\leq1$ everywhere and by Lemma \ref{Qcontrol} we obtain a complete solution $(f,a,b): [0,\infty) \rightarrow \R^3$. Now assume that there exists an $s_0$ such that $Q(s_0) < 1$. Then we can choose an $\epsilon >0$ such that the solution $(f_{\epsilon}, a_{\epsilon}, b_{\epsilon}): [0,s_0] \rightarrow \R^3$ with boundary conditions $a'''(0) = a_0 + \epsilon$ and $b'''(0) = b_0$ exists and $Q_{\epsilon}(s_0) < 1$. This, however, leads to a contradiction, as $Q''_{\epsilon}(0) > 0$ and thus $Q'_{\epsilon}(s) >0$ for $s>0$ by Lemma \ref{QkeyLemma}. We conclude that $Q=1$ and hence $a=b$ everywhere. As the soliton equations simplify to the rotationally symmetric equations (\ref{bryant1}) and (\ref{bryant2}) when $Q=1$, the solution must be homothetic to the Bryant soliton. The asymptotics of the Bryant soliton follow from Corollary \ref{Qlim1-asymptotic}.
\end{proof}

%As mentioned above, the proof of the existence of the Bryant soliton in case (4) carries over word by word to odd dimensions $d\geq3$ by allowing $n$ to take half-integer values. One easily checks that all of the previous results also hold for such $n$. In particular, no term involving $n$ in the evolution equation (\ref{Qeqn}) of $Q$ or any other (in-)equalities studied above changes its sign when we allow half-integer values for $n$. We only fail to have a geometrical interpretation of $a$ and $b$, when $a\neq b$ in odd dimensions. When $a=b$, however, we can interpret $a$ as in (\ref{radsymmetric}). Therefore we obtain another proof for the existence and uniqueness of the Bryant soliton in dimensions $d\geq3$:
%\begin{thm}
%On $\R^d$, $d\geq 3$, there exists a unique rotationally symmetric gradient steady soliton, i.e. the Bryant soliton.
%\end{thm}

\section{Conjectures}
In this section we briefly discuss two conjectures relating to the non-collapsed solitons of Theorem \ref{main-thm}. We numerically integrated the soliton equations $(\ref{solitoneqn1})-(\ref{solitoneqn3})$ and found strong support for the following conjecture:
\begin{conj}
On a line bundle $L_k$, $k > p ( \hat{M}, \omega)$, the complete non-collapsed steady gradient soliton of Theorem \ref{main-thm} is unique up to scaling and isometry in the class of metrics (\ref{metric}). Moreover, choosing the normalization $b(0)=1$, there exists an $f^{\ast}_0 < 0$ such that the maximum
solution $(f, a, b): [0, s_\infty) \rightarrow \R^3, s_\infty \in \R \cup \{ \infty\},$ to the soliton equations (\ref{solitoneqn1})-(\ref{solitoneqn3}) is
\begin{enumerate}
\item incomplete when $f''(0) > f^{\ast}_0$ 
\item complete and non-collapsed when $f''(0) = f^{\ast}_0$
\item complete and collapsed when $f''(0) < f^{\ast}_0$ 
\end{enumerate}
\end{conj}

Motivated by the discovery of the non-collapsed steady solitons in this paper, we conducted Ricci flow simulations for $U(2)$-invariant metrics of the form (\ref{U2-inv-metric}) in four dimensions to investigate whether the solitons of Corollary \ref{cor:4d-non-collapsed-solitons} appear as blow-up limits of singularities. Our results indicate that they do indeed and a paper is in preparation. We therefore conjecture:

\begin{conj}
The non-collapsed steady solitons of Theorem \ref{main-thm} all occur as singularity models in Ricci flow.
\end{conj}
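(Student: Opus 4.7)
The plan is to realize each non-collapsed steady soliton of Theorem \ref{main-thm} as the pointed Cheeger--Gromov--Hamilton limit of parabolic rescalings about a Type~II singular point for a Ricci flow on a suitably chosen \emph{closed} ambient manifold, following the template of Angenent--Isenberg--Knopf for Bryant-type neckpinches and of Maximo for the FIK shrinker. A natural compactification of $L_k$ is the projective completion $\mathbb{P}(L_k\oplus\mathcal{O})\to\hat M$, a $\mathbb{C}P^1$-bundle carrying two distinguished $\hat M$-sections and admitting the same cohomogeneity-one symmetry as in (\ref{metric}). On this compact manifold I would consider a one-parameter family of smooth invariant initial metrics $g_\lambda$, $\lambda\in[0,1]$, parametrised by functions $(a_0(s,\lambda),b_0(s,\lambda))$ on a finite interval with the two endpoint conditions corresponding to the two zero sections.

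Under this symmetry the Ricci flow (\ref{ricciflow}) reduces to a strictly parabolic system for $(a(s,t),b(s,t))$ which is the time-dependent analogue of (\ref{solitoneqn2})--(\ref{solitoneqn3}) with soliton drift replaced by $\partial_t$. I would choose the family so that for small $\lambda$ the flow contracts the $\hat M$-section at $s=0$ in finite time, producing a Type~I shrinking soliton singularity modelled on $\hat M\times\mathbb{R}^2/\mathbb{Z}_k$ or the FIK-type shrinker; while for large $\lambda$ either the other section collapses first, or the singularity is of a manifestly distinct topological type. A continuity/connectedness argument in $\lambda$, together with the fact that the set of $\lambda$ producing each type of behaviour is open, should isolate a critical parameter $\lambda^\ast$ at which the first singularity is Type~II and its dominant scale forms near the $s=0$ section.

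At $\lambda=\lambda^\ast$ I would pick a sequence $(x_i,t_i)$ realising Hamilton's Type~II point-picking procedure, rescale by $K_i=|Rm|(x_i,t_i)$, and pass to a limit. Perelman's no-local-collapsing theorem supplies a uniform $\kappa>0$ and hence noncollapsing for the rescaled flows; Hamilton's compactness yields a subsequential smooth limit $(M_\infty,g_\infty(t))$ which is a complete, non-flat, $\kappa$-non-collapsed, eternal Ricci flow inheriting the cohomogeneity-one symmetry. The limit is of the form (\ref{metric}) on either $L_k$ or a related total space, and Type~II together with the Hamilton--Perelman machinery (bounded curvature and attainment of $\sup|Rm|$) forces it to be a gradient steady soliton. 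Finally, the classification in Theorem \ref{thm-asymptotics} and Lemma \ref{aasympQ0} shows that among complete solutions of (\ref{solitoneqn1})--(\ref{solitoneqn3}) on $L_k$ only the two asymptotic types $Q_\infty=0$ and $Q_\infty=1$ occur; $\kappa$-noncollapsing rules out $Q_\infty=0$ since those solitons are collapsed, and identifies the limit with the soliton produced in Section~\ref{section_existence}, up to scale.

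The hard part will be Step three, namely showing that the Type~II blow-up is actually a gradient steady soliton and that the correct base-point selection is the $s=0$ section rather than an interior neck. In higher dimensions without global curvature pinching there is no general theorem guaranteeing that a Type~II blow-up is a gradient soliton; one needs to exploit the cohomogeneity-one structure to reduce to a one-dimensional parabolic problem and then apply Hamilton's differential Harnack and eternal solution rigidity within that symmetry class, essentially running an equivariant version of the argument that yields the Bryant soliton as the only rotationally symmetric non-flat $\kappa$-noncollapsed eternal flow with positive curvature. Secondary obstacles are ruling out a shrinking $\hat M\times\mathbb{R}^2/\mathbb{Z}_\ell$ cylinder at intermediate scales (handled by a parabolic maximum principle for $Q(s,t)$ analogous to Lemma \ref{Qkeylemma}) and upgrading subsequential convergence to a unique soliton limit, for which Conjecture~8.1 on uniqueness within the class (\ref{metric}) would suffice; absent that, a direct ODE-uniqueness argument at the level of the limiting profile $(a_\infty,b_\infty,f_\infty)$ can be substituted.
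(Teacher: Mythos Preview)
The statement you are attempting to prove is a \emph{conjecture} in the paper, not a theorem: the paper offers no proof, only numerical evidence from simulations of the full Ricci flow and the remark that ``a further paper on this is in preparation.'' So there is no paper proof to compare your proposal against.

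Your outline follows the natural template (Angenent--Isenberg--Knopf, Maximo) and is a sensible research plan, but as written it is a programme rather than a proof, and several steps contain genuine gaps you yourself partly flag. The continuity argument in $\lambda$ is not rigorous as stated: you assert that the sets of $\lambda$ producing each singularity type are open and distinct, but you have not shown that these two behaviours exhaust $[0,1]$, nor that the boundary parameter $\lambda^\ast$ yields a Type~II singularity rather than, say, a degenerate Type~I neckpinch or simultaneous collapse of both sections. The identification of the blow-up limit as a gradient steady soliton is, as you acknowledge, the crux: Hamilton's eternal-solution rigidity requires that the supremum of scalar curvature be attained, and in the cohomogeneity-one setting without a sign condition on the curvature operator this does not follow automatically from Type~II point-picking. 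Your appeal to an ``equivariant version'' of the Bryant argument is a hope, not a lemma. Finally, even granting that the limit is a complete steady soliton of the form (\ref{metric}) on $L_k$, ruling out $Q_\infty=0$ via $\kappa$-noncollapsing is fine, but you also invoke Conjecture~1 (uniqueness of the $Q_\infty=1$ soliton) to pin down the limit; since that is itself open, your argument at best reduces one conjecture to another.

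In short: the approach is reasonable and likely close to what the author has in mind for the sequel, but what you have written is a proof strategy with several substantial holes, not a proof.
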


Note that in the case of $L_1$ over $\mathbb{C}P^1$ (i.e. $n=k=1$ and $p=2$ in our notation), Davi Maximo already showed in \cite{M14} that the FIK shrinker, which is the unique shrinking K\"ahler-Ricci soliton for a metric of the form (\ref{metric}), occurs as a singularity model.

In Figures 1 and 2 examples of complete solitons with $Q_{\infty} = 0$ and $Q_{\infty} = 1$, respectively, are depicted. 

\begin{figure}[h]
\centering
\begin{minipage}{.5\textwidth}
\label{fig1}
\centering
\includegraphics[width=\linewidth]{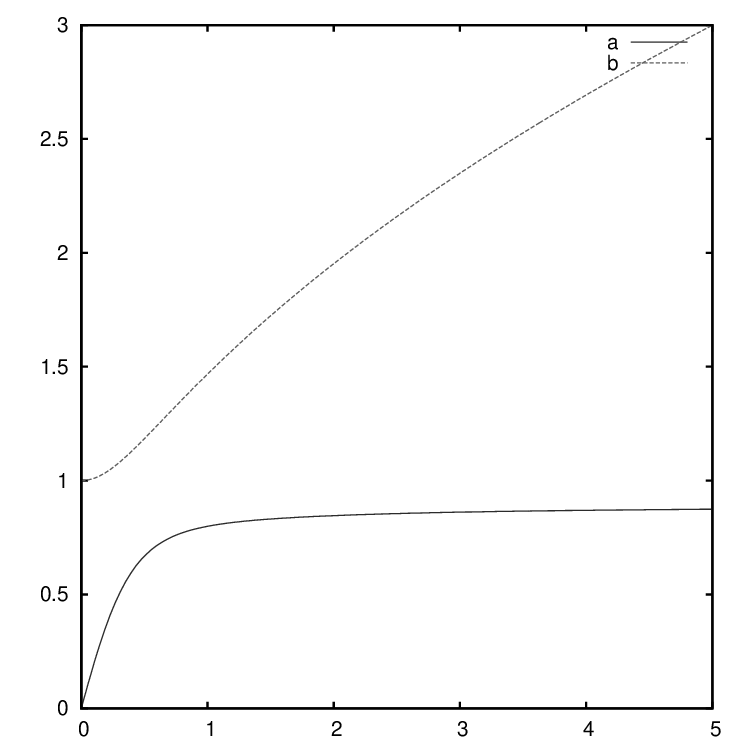}
\caption{A collapsed soliton on $\R_{>0}\times S^3/\mathbb{Z}_2$ with $f''(0) = -10$}
\end{minipage}%
\begin{minipage}{.5\textwidth}
\label{fig2}
\centering
\includegraphics[width=\linewidth]{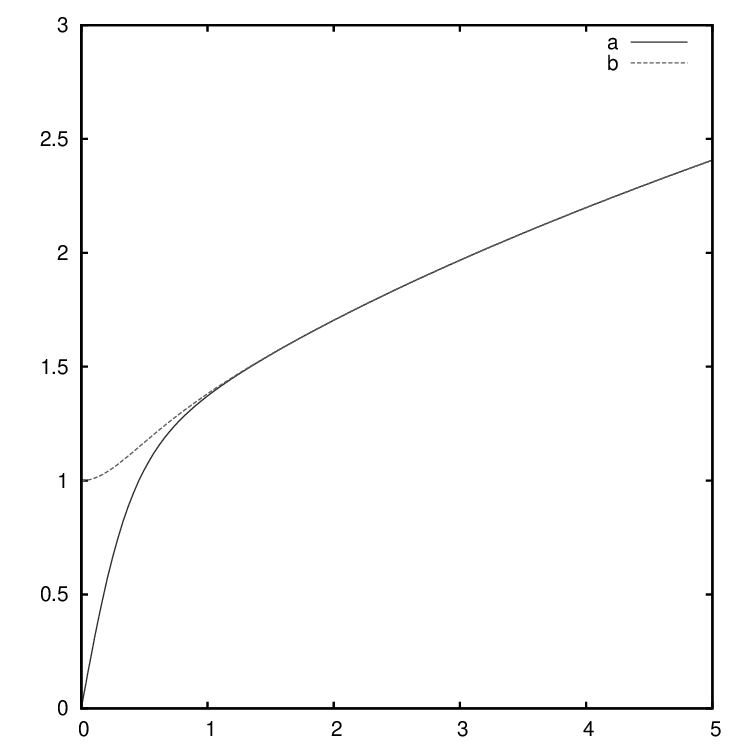}
\caption{The non-collapsed soliton on $\R_{>0}\times S^3/\mathbb{Z}_3$}
\end{minipage}
\end{figure}

\section*{Acknowledgments}
The author thanks his PhD advisors Prof. Richard Bamler and Prof. Jon Wilkening for their generosity in time and advice, without which the solitons would have not been found. The author would also like to point out that the proof of Lemma \ref{asymptotics} is due to Prof. Jon Wilkening. We thank Yongjia Zhang for pointing out an error in the Appendix and a gap in Section 6. Finally we thank the anonymous referee for his/her helpful comments.

This work was supported in part by the U.S. Department of Energy, Office of Science, Applied Scientific Computing Research, under award DE-AC02-05CH11231.

\section*{Appendix A}
Here we derive the Ricci soliton equations. We will follow \cite{PP87} to compute the Ricci tensor of the metric
\begin{equation*}
g = ds^2 + a(s)^2 \left( d\tau - 2 A\right)^2 + b(s)^2 \hat{g},
\end{equation*}
on a complex line bundle of a K\"ahler-Einstein manifold $(\hat{M}^{2n}, J, \hat{g})$, where $A$ is a connection 1-form on $\hat{M}$ such that $dA = \omega$ and $\omega$ is K\"ahler form of $\hat{M}$. We will assume that the metric $\hat{g}$ is scaled such that $Ric(\hat{g}) = 2(n+1) \hat{g}$, in order for $a$ and $b$ to have a nice geometrical interpretation when we choose $\mathbb{C}P^n$ equipped with the Fubini-Study metric as the base manifold. For the same reason we multiply the  connection form $A$ by 2.

We will compute the full curvature tensor of $g$ using Cartan's formalism. Pick an orthonormal frame of 1-forms $e^0 = ds$, $e^1 = a\left(d\tau - 2A\right)$ and $e^i = b \hat{e}^i$, $i = 2, 3, \cdots, 2n+1$, where $\hat{e}^i$ is an orthonormal frame on the base $\hat{M}$. Denote by $e_i$, $i = 0, 1, \cdots, 2n+1$ and $\hat{e}_i$ , $i = 2, 3, \cdots 2n+1$ the corresponding dual basis. In the following indices will run from either $0$ to $2n+1$ or $2$ to $2n+1$, which will be clear from context. 

The connection 1-forms $\theta^i_j$, defined by $\nabla e_i = \theta^j_i e_j$, and the curvature 2-forms $\Omega^j_i$, defined by $R(\cdot, \cdot)e_i = \Omega^j_i e_j$, satisfy Cartan's structure equations
\begin{align*}
de^i &= - \theta^i_j \wedge e^j \\
\theta^i_j &= - \theta^j_i\\
\Omega^i_j &= d\theta^i_j + \theta^i_k \wedge \theta^k_j.
\end{align*}
Note that in coordinates we have $\Omega^i_j = \frac{1}{2} R^i_{\;jkm} e^k\wedge e^m$. In the following we will denote by $\hat{\theta}^i_j$ and $\hat{\Omega}^i_j$ the connection 1-forms and curvature 2-forms respectively, corresponding to the frame $\hat{e}^i$ on the base $(\hat{M}, \hat{g})$. Moreover $\hat{\nabla}$ will be the covariant derivative on $(\hat{M}, \hat{g})$. Hence we compute
\begin{align*}
\theta^0_1 &= -\frac{a'}{a} e^1 && \theta^0_i = -\frac{b'}{b} e^i \\
\theta^1_i &= - \frac{a}{b^2} \omega_{ij} e^j && \theta^i_j = \hat{\theta}^i_j + \frac{a}{b^2} \omega_{ij} e^1.
\end{align*}
Proceeding, we obtain
\begin{align*}
\nonumber \Omega^0_1 &= -\frac{a''}{a} e^0 \wedge e^1 + \frac{1}{b^2}\left(a' - \frac{ab'}{b}\right) w_{ij} e^i\wedge e^j \\
\nonumber \Omega^0_i &= - \frac{b''}{b} e^0 \wedge e^i + \frac{1}{b^2}\left(a' - \frac{ab'}{b}\right) w_{ij}e^1 \wedge e^j \\
\nonumber\Omega^1_i &= \left(\frac{a^2}{b^4} \omega_{kj}\omega_{ki} - \delta_{ij} \frac{a'b'}{ab}\right) e^1\wedge e^j - \frac{1}{b^2} \left(a' - \frac{ab'}{b}\right) \omega_{ij} e^0\wedge e^j \\
\nonumber\Omega^i_j &= \hat{\Omega}^i_j - \left(\frac{b'}{b}\right)^2 e^i \wedge e^j - \frac{a^2}{b^4}\left( \omega_{ij}\omega_{km} + \omega_{ik} \omega_{jm} \right) e^k \wedge e^m +\frac{2}{b^2}\left( a' - \frac{ab'}{b} \right)\omega_{ij} e^0 \wedge e^1 
\end{align*}
Note that we used that the complex structure $J$ is parallel for a K\"ahler manifold and thus $\omega_{ik}\hat{\theta}^k_j = \hat{\theta}^k_i \omega_{kj}$. Finally we can compute the non-zero entries of the Ricci tensor via $R_{ij} = R^k_{\;ikj}$
\begin{align*}
R_{00} &= - \frac{a''}{a} - 2n \frac{b''}{b} \\
R_{11} &= - \frac{a''}{a} + 2n\left(\frac{a^2}{b^4} - \frac{a'b'}{ab}\right) \\
R_{ii} &= - \frac{b''}{b} + \frac{2n+2}{b^2}- 2\frac{a^2}{b^4} - \frac{a'b'}{ab} - (2n-1)\left(\frac{b'}{b}\right)^2 
\end{align*}
From this we can also compute the scalar curvature
\begin{equation}
\label{scalarcurvature}
R = -2 \frac{a''}{a} - 4n \frac{b''}{b} - 4n \frac{a'b'}{ab} - 2n(2n-1)\left(\frac{b'}{b}\right)^2 - 2n \frac{a^2}{b^4} + \frac{2n(2n+2)}{b^2}
\end{equation}
Finally we need to compute the Hessian $\nabla^2f$. From Koszul's formula it follows that the only non-zero terms are
\begin{align*}
\nabla^2_{e_0,e_0} &= f'' \\
\nabla^2_{e_1,e_1} &= \frac{a'}{a}f' \\
\nabla^2_{e_i,e_i} &= \frac{b'}{b}f'
\end{align*}
Therefore we obtain the soliton equations (\ref{solitoneqn1})-(\ref{solitoneqn3}). From above it also follows that the Laplacian $\Delta \Phi$ of a function $\Phi: M \rightarrow \R$ depending only on $s$ can be written as
\begin{equation}
\label{laplacian}
\Delta \Phi = \Phi'' + \left(\frac{a'}{a}+ 2n \frac{b'}{b}\right)\Phi'.
\end{equation}
\section*{Appendix B}
Here we prove Theorem \ref{analiticity} ascertaining the local existence of analytic solutions to the soliton equations (\ref{solitoneqn1})-(\ref{solitoneqn3}) around the origin. We begin by proving Theorem \ref{BBgen}, which generalizes the following result of the French mathematicians Briot and Bouquet to a parameter dependent system of ODEs:
\begin{thm}[Briot and Bouquet 1856, \cite{BB1856}]
Let $f: \R\times\R \rightarrow \R$ be an analytic function vanishing at $(0,0)$ and its derivative $\frac{\partial f}{\partial u}(0,0)$ not be a positive integer. Then there exists an  analytical solution $u$ around $r=0$ to the non-linear ODE
\begin{equation*}
r\frac{du}{dr} = f(u,r).
\end{equation*}
\end{thm}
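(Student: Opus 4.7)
The plan is to construct the solution as a power series in $r$ centered at the origin and then establish its convergence by the method of majorants; the non-resonance hypothesis on $\lambda := \partial_u f(0,0)$ is exactly what makes both steps go through.

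First, since $f$ is analytic at $(0,0)$ with $f(0,0)=0$, I expand
\[
f(u,r) = \lambda u + \sum_{(i,j)\neq(0,0),(1,0)} c_{ij}\,u^i r^j
\]
on some bidisc $\{|u|,|r|<\rho\}$. Evaluating the ODE at $r=0$ forces $f(u(0),0)=0$, so I look for the branch with $u(0)=0$ as a formal series $u(r)=\sum_{n\geq 1} a_n r^n$. Substituting into $r\,u'=f(u,r)$ and matching the coefficient of $r^n$, the left side contributes $n a_n$, the linear term $\lambda u$ on the right contributes $\lambda a_n$, and every remaining monomial $c_{ij}\,u(r)^i r^j$ with $(i,j)\neq(1,0)$ contributes a polynomial in $a_1,\dots,a_{n-1}$ alone (a direct check: for $(i,j)\neq(1,0)$ the $r^n$-coefficient of $u(r)^i r^j$ involves only the $a_k$ with $k\leq n-1$). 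This yields the recursion
\[
(n-\lambda)\,a_n = P_n(a_1,\dots,a_{n-1}),
\]
where $P_n$ is a universal polynomial with nonnegative-integer weights in the coefficients $c_{ij}$. The hypothesis $\lambda\notin\{1,2,3,\dots\}$ ensures $n-\lambda\neq 0$ for every $n\geq 1$, so the $a_n$ are determined uniquely.

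The main obstacle is proving convergence of this formal series, which I would handle by the method of majorants. Set $\delta := \inf_{n\geq 1}|n-\lambda|$; this is strictly positive since $|n-\lambda|\to\infty$ and each term is nonzero. Define
\[
G(U,r) := \sum_{(i,j)\neq(0,0),(1,0)} |c_{ij}|\,U^i r^j,
\]
which is analytic on the same bidisc and satisfies $G(0,0)=0$ and $\partial_U G(0,0)=0$. Applying the analytic implicit function theorem to $H(U,r):=\delta U - G(U,r)$ at $(0,0)$, where $\partial_U H(0,0)=\delta\neq 0$, yields a unique analytic solution $U(r)=\sum_{n\geq 1}A_n r^n$ of the implicit equation $\delta U=G(U,r)$ on some disc $|r|<r_0$. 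Matching coefficients gives $\delta A_n = P_n(A_1,\dots,A_{n-1})$ with the same universal polynomial $P_n$ but with $|c_{ij}|$ in place of $c_{ij}$, and a straightforward induction then shows $|a_n|\leq A_n$ for every $n$. Hence $\sum a_n r^n$ converges on $|r|<r_0$ and defines the desired analytic solution. The one delicate point is the bookkeeping that $P_n$ has nonnegative-integer weights in the $c_{ij}$ so that the majorant comparison holds term by term; once that is in hand, the non-resonance hypothesis does all of the heavy lifting, both in the solvability of the recursion and in the convergence of the majorant.
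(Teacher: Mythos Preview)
Your proposal is correct and follows the classical majorant method. Note, however, that the paper does not actually prove this Briot--Bouquet theorem: it is quoted as a known 1856 result, and what the paper proves in the appendix is the parameter-dependent system version (Theorem~\ref{BBgen}). That said, your argument is precisely the scalar specialization of the paper's proof of the generalization, which in turn follows Hille's textbook treatment: both set up the recursion $(n-\lambda)a_n = P_n(a_1,\dots,a_{n-1})$ with $P_n$ a multinomial having nonnegative coefficients, and both establish convergence by comparison with a majorant series.

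The one methodological difference is in how the majorant is produced. The paper writes down an explicit majorant
\[
G_i(u,r) \;=\; \frac{2M}{(1-r/R)\bigl(1-\tfrac{1}{R}\sum_k u_k\bigr)} \;-\; 2M\Bigl(1+\tfrac{1}{R}\sum_k u_k\Bigr),
\]
solves the resulting quadratic implicit equation by hand, and reads off a concrete radius of convergence. You instead invoke the analytic implicit function theorem on $H(U,r)=\delta U - G(U,r)$ to obtain the majorant $U(r)$ nonconstructively. Your route is shorter and conceptually cleaner for the scalar statement; the paper's explicit route is needed in the generalization because the authors want a radius $R_c$ that is visibly uniform in the parameter $\lambda$, which the bare implicit function theorem would not immediately yield.
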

We then show how the soliton equations (\ref{solitoneqn1})-(\ref{solitoneqn3}) can be put in a form such that Theorem \ref{BBgen} can be applied. This yields the proof of Theorem \ref{analiticity}. 

\begin{thm}
\label{BBgen}
Let $n \in \N$, $c \in \R$ and $U \subset \R^n$ an open subset containing the origin. Let 
\begin{align*}
P: \quad  U \times \R \times \R &\longrightarrow \R^n  \\ 
 	 (u, r, \lambda) &\longrightarrow P(u,r,\lambda)
\end{align*}
be a vector valued analytic function around $(\vec{0}, 0 , c)$ such that $P(\vec{0},0,\lambda) = 0$ for all $\lambda \in \R$. If there is an open interval $I \ni c$ such that for all $\lambda \in I$ $\frac{\partial P}{\partial u}(\vec{0},0, \lambda)$ has no positive integer eigenvalues and
\begin{equation*}
B = \sup\limits_{\substack{\lambda \in I \\ m \in \N} } \norm{ \left(m I_n - \frac{\partial P}{\partial u}(\vec{0},0,\lambda)\right)^{-1} } < \infty,
\end{equation*}
then there exists an $\epsilon > 0$ and a one-parameter family of analytic vector valued functions $u(\cdot, \lambda): (-\epsilon, \epsilon) \rightarrow \R^n$ solving the ODE system
\begin{align}
\label{bb-eqn}
r \frac{d u(r, \lambda) }{d r} &= P(u(r,\lambda),r, \lambda)\\ \nonumber
u(0, \lambda) &= 0
\end{align}
for  $\lambda \in (c- \epsilon, c+ \epsilon)$. Furthermore $u$ depends analytically on $\lambda$.
\end{thm}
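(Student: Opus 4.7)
The plan is to construct the solution as a formal power series $u(r, \lambda) = \sum_{m \geq 1} u_m(\lambda) r^m$, read off the coefficients from the ODE, and then prove convergence by the classical method of majorants, with the uniform resolvent bound $B$ playing the role that the absence of positive integer eigenvalues plays in the scalar Briot--Bouquet theorem.

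First I would set $A(\lambda) := \frac{\partial P}{\partial u}(\vec{0}, 0, \lambda)$ and expand
\begin{equation}
P(u, r, \lambda) = A(\lambda) u + P_{\vec 0, 1}(\lambda)\, r + \sum_{|\alpha| + j \geq 2} P_{\alpha, j}(\lambda)\, u^{\alpha} r^j,
\end{equation}
where the Taylor coefficients $P_{\alpha, j}$ are analytic in $\lambda$ on $I$. Substituting the ansatz into $r \partial_r u = P(u, r, \lambda)$ and matching the coefficient of $r^m$ yields, for each $m \geq 1$, a recursion of the form
\begin{equation}
\bigl(m I_n - A(\lambda)\bigr) u_m(\lambda) = F_m\bigl(u_1(\lambda), \dots, u_{m-1}(\lambda); \lambda\bigr),
\end{equation}
where $F_m$ is a polynomial in the lower-order coefficients whose coefficients are polynomials in the $P_{\alpha, j}(\lambda)$ with $|\alpha| + j \leq m$. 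By hypothesis $m I_n - A(\lambda)$ is invertible for all $\lambda \in I$ and $m \in \N$, so one can solve uniquely and obtain $u_m(\lambda)$ as an analytic function of $\lambda$ on $I$ by induction on $m$.

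To establish convergence I would apply the method of majorants. Fix a closed subinterval $J \Subset I$ containing $c$, and, using analyticity of $P$ on a joint polydisc neighborhood of $(\vec 0, 0, c)$, choose $\rho > 0$ and nonnegative constants $\hat P_{\alpha, j}$ majorizing $\sup_{\lambda \in J} \norm{P_{\alpha, j}(\lambda)}$ and defining a scalar analytic function $\hat P(\hat u, r)$ on the bidisc of radius $\rho$. Consider the scalar analytic fixed-point equation
\begin{equation}
\hat u(r) = B\, \hat P\bigl(\hat u(r), r\bigr), \qquad \hat u(0) = 0,
\end{equation}
which has a unique analytic solution $\hat u$ on some disc $|r| < \epsilon$ by the analytic implicit function theorem (shrinking $\rho$ as needed). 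A term-by-term comparison of the two recursions, combined with the uniform bound $\norm{(m I_n - A(\lambda))^{-1}} \leq B$, gives $\norm{u_m(\lambda)} \leq \hat u_m$ for all $m \geq 1$ and $\lambda \in J$, where $\hat u_m$ are the Taylor coefficients of $\hat u$. Consequently $\sum u_m(\lambda) r^m$ converges on $|r| < \epsilon$ uniformly in $\lambda \in J$.

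The hard part, and the reason the hypothesis that $B$ be finite is needed rather than only the pointwise absence of positive integer eigenvalues, is exactly this majorant comparison: without a $\lambda$-uniform bound on $(m I_n - A(\lambda))^{-1}$ the coefficients $u_m(\lambda)$ could grow in $m$ faster than any fixed scalar majorant allows, and the series would fail to have a common radius of convergence across $J$. Once uniform convergence on $|r| < \epsilon$ is established, joint analyticity of $u(r, \lambda)$ follows from the analyticity of each $u_m(\lambda)$ together with uniform convergence (Osgood's lemma, or Morera applied separately in each variable), and $u$ satisfies the ODE because the recursion was derived precisely by matching coefficients.
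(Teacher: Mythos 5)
Your proposal follows essentially the same route as the paper: a formal power series in $r$ solved order by order via the invertibility of $mI_n - A(\lambda)$, followed by a majorant argument in which the uniform bound $B$ converts the vector recursion into a scalar one with a $\lambda$-independent radius of convergence, yielding analyticity in $\lambda$. One point needs care in your majorant step: for the comparison $\norm{u_m(\lambda)} \leq \hat u_m$ to line up with the recursion $(mI_n - A(\lambda))u_m = F_m$, the scalar majorant $\hat P$ must \emph{omit} the linear-in-$\hat u$ terms (they are already accounted for on the left-hand side through $(mI_n - A)^{-1}$ and the factor $B$); if $\hat P$ majorizes all Taylor coefficients including $|\alpha|=1$, $j=0$, then the fixed-point equation $\hat u = B\hat P(\hat u,r)$ gives $\hat u_m(1 - B\hat A) = B\hat F_m$, and the implicit function theorem hypothesis $1 - B\,\partial_{\hat u}\hat P(0,0) \neq 0$ can fail since there is no reason for $B\hat A < 1$. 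The paper handles this explicitly by choosing a majorant $G$ with $\frac{\partial G}{\partial u}(\vec 0,0)=0$ (and solving the resulting quadratic fixed-point equation in closed form rather than invoking the implicit function theorem); with that adjustment your argument goes through as the paper's does.
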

\begin{remark}
\begin{enumerate}
\item $I_n$ denotes the $n\times n$ identity matrix.
\item For a matrix $M$ we denote by $\norm{M}$ the operator norm with respect to the standard Euclidean norm on $\R^n$.
\end{enumerate}
\end{remark}

\begin{proof}
We follow the proof of the one-dimensional case presented in \cite{H79}[Theorem 11.1]. Denote by $u_i$ and $P_i$, $i = 1, 2, \cdots, n$, the components of $u$ and $P$ respectively. By analyticity we can write $P_i$ as a power series around the origin
\begin{equation*}
P_i(u,r, \lambda) = \sum_{k_1, \cdots, k_{n+2} \in \N} P_{i k_1 \cdots k_{n+2}} u_1^{k_1} \cdots u_n^{k_n} r^{k_{n+1}} \left(\lambda - c \right)^{k_{n+2}} 
\end{equation*}
such that for some $M > 0$, $R>0$ 
\begin{equation*}
|P_{i k_1 \cdots k_{n+2}}| < \frac{M}{R^{k_1 + \cdots + k_{n+2}}}
\end{equation*}
for $i = 1, 2, \cdots, n$ and $k_1, \cdots, k_{n+2} \in \N_{0}$. That is to say the power series converges whenever $|u_1|, \cdots, |u_n|, |r|, |\lambda -c| < R$. Defining the analytic functions
\begin{equation*}
c_{i k_1 \cdots k_{n+1}}(\lambda) := \sum_{k_{n+2} \in \N_0} P_{i k_1 \cdots k_{n+2}}(\lambda-c)^{k_{n+2}}
\end{equation*}
we have that for $|\lambda - c| < \frac{R}{2}$ 
\begin{equation*}
 \left|c_{i k_1 \cdots k_{n+1}}(\lambda)\right| < \frac{2M}{R^{k_1 + \cdots k_{n+1}}}.
\end{equation*}
Letting
\begin{align*}
c_i(\lambda) = \frac{\partial P_i}{\partial r} (\vec{0},0, \lambda) \\
c_{ij}(\lambda)  = \frac{\partial P_i}{\partial u_j} (\vec{0},0, \lambda)
\end{align*}
we can then write
\begin{equation*}
P_i(\vec{u},r, \lambda) = c_i(\lambda) r + \sum_{j= 1}^n c_{ij}(\lambda) u_{j} + \sum_{k_1 + \cdots + k_{n+1} \geq 2} c_{i k_1 \cdots k_{n+1}}(\lambda) u_1^{k_1} \cdots u_n^{k_n} r^{k_{n+1}}
\end{equation*}
for $i = 1, 2, \cdots, n$, whenever $|u_i|, r < R$ and $|\lambda - c| < \frac{R}{2}$. Below we fix such a $\lambda$ and omit stating the dependence of our quantities on it. 

We proceed by constructing a formal power series solution of the form 
\begin{equation}
\label{ansatz}
u_i(r) = \sum_{j = 1}^{\infty} a_{ij} r^j
\end{equation} 
for $i = 1 ,2, \cdots, n$ and $a_{ij} \in \R$. By substituting (\ref{ansatz}) into (\ref{bb-eqn}) we obtain
\begin{align*}
\sum_{j = 1}^{\infty} j a_{ij} r^j &= c_i r + \sum_{j= 1}^n c_{ij}\left( \sum_{q = 1}^{\infty} a_{jq} r^q \right)  \\ 
			& \quad  + \sum_{k_1 + \cdots + k_{n+1} \geq 2} c_{i k_1 \cdots k_{n+1}} \left(\sum_{j = 1}^{\infty} a_{k_1 j} r^j\right)^{k_1} \cdots \left(\sum_{j = 1}^{\infty} a_{k_n j} r^j\right)^{k_n} r^{k_{n+1}}
\end{align*}
for $i = 1, 2, \cdots n$. By expanding and collecting terms of equal order we deduce that
\begin{equation*}
\sum_{k = 1}^n \left(\delta_{ik} - c_{ik}\right) a_{k1} = c_i 
\end{equation*}
for the first order terms and
\begin{equation*}
\sum_{k = 1}^n\left(j \delta_{ik} - c_{ik}\right) a_{kj} = M_j(c_{ik_1k_2 \cdots k_{n+1}}; \{a_{pq}\; |\; q \leq j-1,  1 \leq p \leq n \} )
\end{equation*}
for the $j$-th order terms ($j > 1$), where $M_j$ is a multinomial with \emph{non-negative} coefficients depending on the variables indicated. In the following denote by $D(m)$, $m \in \N$ the matrix with components
\begin{equation*}
m \delta_{ij} - c_{ij}.
\end{equation*}
Because the matrix $c_{ij}$ has no positive integer eigenvalues, $D(m)$ is invertible for all $m\in \N$ and we can uniquely determine $a_{ij}$ order by order. In the following we will show that the resulting power series (\ref{ansatz}) has a positive radius of convergence. 

For this consider an analytic vector valued function 
\begin{equation*}
G: \R^n \times \R \rightarrow \R^n
\end{equation*}
given by
\begin{equation*}
G_i(\vec{u},r) = C_i r + \sum_{k_1 + \cdots + k_{n+1} \geq 2} C_{i k_1 \cdots k_{n+1}} u_1^{k_1} \cdots u_n^{k_n} r^{k_{n+1}}
\end{equation*}
that majorizes $P$ for all non-first order terms in $u_i$
\begin{align*}
|c_i| &\leq C_i  \\
|c_{i k_1 \cdots k_{n+1}}| &\leq  C_{i k_1 \cdots k_{n+1}}
\end{align*}
and for which the Jacobian vanishes
\begin{equation*}
\frac{\partial G}{\partial u} (\vec{0},0) = \vec{0}.
\end{equation*}
We choose
\begin{equation*}
G_i(\vec{u},r) = \frac{2M}{\left(1- \frac{r}{R} \right)\left( 1 - \frac{1}{R}\left( u_1 + \cdots + u_n \right) \right)} - 2M \left( 1 + \frac{1}{R}\left( u_1 + \cdots + u_n \right) \right)
\end{equation*}
in which case
\begin{align*}
C_1 &= C_2 = \cdots = C_n \\
C_{1 k_1 \cdots k_{n+1}} &= C_{2 k_1 \cdots k_{n+1}} = \cdots = C_{n k_1 \cdots k_{n+1}}
\end{align*}
for $k_1, \cdots, k_{n+1} \in \N$. We proceed by finding an analytic function
\begin{align*}
Y:\; \R &\longrightarrow \R^n \\
Y(0)&= \vec{0}
\end{align*} 
solving the implicit equation 
\begin{equation}
\label{implicit-eqn}
Y_i(r) =  B\sqrt{n} G_i( Y(r), r), \; i = 1, 2, \cdots, n.
\end{equation}
and show that it majorizes the formal power series solution found for $u$ above, thereby proving the desired result. For our choice of $G$ the equation (\ref{implicit-eqn}) is quadratic and solved by
\begin{equation*}
Y_i(r) = \frac{1 - \sqrt{ 1 - 8\sqrt{n}MB \left( 2\sqrt{n}MB\left(\frac{n}{R}\right)^2 + \frac{n}{R} \right)\left(\frac{r}{r-R}\right)  } }{4\sqrt{n}MB\left(\frac{n}{R}\right)^2 + 2\frac{n}{R}}
\end{equation*}
for $i = 1, 2, \cdots, n$. Note that $Y_i$ vanishes at the origin and is analytic around $r =0$ with radius of convergence
\begin{equation*}
R_c = \frac{R}{1+ 8 \sqrt{n}MB \left(2 \sqrt{n} MB \left(\frac{n}{R}\right)^2 + \frac{n}{R}\right)} > 0.
\end{equation*}
Therefore we can write $Y$ as a power series
\begin{equation*}
Y_i(r) = \sum_{j = 1}^{\infty} A_{ij} r^j
\end{equation*}
for $i = 1, 2, \cdots, n$ and $|r| < R_c$. Because the $Y_i$, $i = 1, 2, \cdots, n$, are all equal we have
\begin{equation*}
A_{1j} = A_{2j} = \cdots = A_{nj}
\end{equation*}
for $j \in \N$. Note that we can compute the $A_{ij}$ by solving the implicit equation (\ref{implicit-eqn}) order by order. This leads to
\begin{equation*}
A_{i1} = B\sqrt{n} C_{i}
\end{equation*}
and
\begin{equation*}
A_{ij} = B\sqrt{n} M_j(C_{ik_1k_2 \cdots k_{n+1}}; \{A_{pq}\; |\; q \leq j-1,  1 \leq p \leq n \} )
\end{equation*}
for $j >1$ and $i = 1, 2, \cdots, n$, where $M_j$ is the same multinomial as above. This allows us to show by induction on $j$ that
\begin{equation*}
|a_{ij}| \leq A_{ij}
\end{equation*}
for $i = 1, 2, \cdots, n$ and $j \in \N$. In particular, notice that 
\begin{equation*}
|a_{i1}| = |\sum_{j=1}^n D(1)^{-1}_{ij} c_{j}| < B \sqrt{n}C_{1} = A_{i1},
\end{equation*}
where we used the assumption that
\begin{equation*}
\norm{D^{-1}(m)} \leq B
\end{equation*}
for $m\in \N$. By induction
\begin{align*}
|a_{ij}| &=  \left| \sum_{q=1}^n D(j)^{-1}_{iq}  M_j(c_{qk_1k_2 \cdots k_{n+1}}; \{a_{pq}\; |\; q \leq j-1,  1 \leq p \leq n \})\right| \\
		 &\leq  B \sqrt{n} M_j(C_{1k_1k_2 \cdots k_{n+1}}; \{A_{pq}\; |\; q \leq j-1,  1 \leq p \leq n \} ) \\
		 & = A_{ij}
\end{align*}
Hence the formal power series solution for $u$ converges with radius of convergence greater or equal to $R_c$. Because $R_c$ does not depend on $\lambda$ as long as $|\lambda - c| \leq \frac{R}{2}$ and the coefficients $a_{ij}$ depend analytically on $\lambda$ we deduce that $u$ varies analytically with $\lambda$. 
\end{proof}

Now we can prove Theorem \ref{analiticity}:

\begin{proof}[Proof of Theorem \ref{analiticity}]
Since $a'(s) \neq 0$, locally at $s=0$ we can take $a$ as the independent variable of the soliton equations (\ref{solitoneqn1})-(\ref{solitoneqn3}) by considering the following change of variables 
\begin{equation*}
g = \frac{da^2}{h(a^2)} + g_{a,b(a)}.
\end{equation*}
Therefore taking $r = a^2$, we have
\begin{equation*}
\frac{dr}{ds} = 2 \sqrt{r h(r)}
\end{equation*}
and if we write $\dot{b}$ for $\frac{\partial b}{\partial r}$ etc. our soliton equations read
\begin{align}
\label{appeqn1}\ddot{f} &= \frac{1}{4r}\frac{\dot{h}}{h} + 2n \frac{\ddot{b}}{b} + \frac{n}{r}\frac{\dot{b}}{b} + n \frac{\dot{h}\dot{b}}{h b}  - \frac{1}{2r} \dot{f} - \frac{1}{2} \frac{\dot{h}}{h} \dot{f}\\
\dot{h} &= \frac{2n r}{ b^4} - 4nh \frac{\dot{b}}{b} + 2h \dot{f}\\
\label{appeqn2}\ddot{b} &= \frac{n+1}{2rhb} - \frac{1}{2hb^3} - \frac{\dot{b}}{r} - \frac{1}{2} \frac{\dot{h}}{h}\dot{b} - (2n-1) \frac{\dot{b}^2}{b} + \dot{f}\dot{b}
\end{align}
with boundary conditions 
\begin{align*}
b(0)&=1 \\
\dot{b}(0) &= \frac{n+1}{2a_0^2}\\
h(0) &= a_0^2\\
f(0) &=0\\
\dot{f}(0) &= \frac{f''(0)}{2 a_0^2} \equiv c
\end{align*}
Note that for fixed $n \in \N$ and $a_0\in\R_{>0}$ we can freely vary $\dot{f}(0) = c$. The boundary condition for $\dot{b}$ was derived by using the L'H\^opital's Rule and noting that (\ref{solitoneqn3}) at $s=0$ implies that $b''(0)=n+1$. Since only $\dot{f}$ and $\ddot{f}$ appear in the equation we may consider this ODE as first order in $\dot{f}$. Furthermore, defining $F = \dot{f}$ and $B = \dot{b}$ we can turn the equations (\ref{appeqn1})-(\ref{appeqn2}) into a first order system of ODEs in $(F,h,b,B)$ 
\begin{align*}
r\dot{F} &= -F^2r + 4nr \frac{FB}{b} - 2n\frac{B}{b} - 2n(2n-1)r \frac{B^2}{b^2} \\
\nonumber& \qquad\qquad\qquad\qquad\qquad\qquad+ \frac{n(n+1)}{hb^2} - \frac{n}{2hb^4}\left(r + 2 F r^2 \right) \\
r\dot{h} &= \frac{2nr^2}{b^4} - 4nhr \frac{B}{b} + 2hrF \\
r\dot{b}&= Br \\
r\dot{B} &= \frac{n+1}{2hb} - \frac{r}{2hb^3} - B - \frac{Br^2n}{hb^4} + \frac{rB^2}{b}
\end{align*}
Defining $u(\cdot, c) = (u_1(\cdot,c),u_2(\cdot,c),u_3(\cdot,c),u_4(\cdot,c)) \equiv (F(\cdot)-c,h(\cdot)-h(0),b(\cdot)-b(0),B(\cdot)-B(0))$ for $c\in \mathbb{R}$ we obtain an ODE system with parameter $c$ of the form 
\begin{align}
\label{scheme}
r \frac{d u_i}{d r} &= P_i(u,r,c) \\ \nonumber
	u_i(0,c) & = 0 \quad \text{for }i = 1, 2, 3, 4,
\end{align}
where $P$ is an analytic function in the neighborhood of the point $(\vec{0},0, c)$ in $\mathbb{C}^6$ and $P(\vec{0},0,c) = 0$. We compute $\frac{\partial P_i} {\partial u_j}$ at $(\vec{0},0,c)$ and obtain
\begin{equation*}
\begin{bmatrix}
    0 & -\frac{n(n+1)}{a_0^4} & -\frac{n(n+1)}{a_0^2} & -2n \\
    0 & 0&0&0 \\
    0 & 0&0&0 \\
    0 & -\frac{(n+1)}{2a_0^4} & -\frac{(n+1)}{2a_0^2} & -1 \\
\end{bmatrix}.
\end{equation*}
This matrix has characteristic polynomial 
\begin{equation*}
\text{det}(m I - \frac{\partial P}{\partial u}) = m^3(m+1),
\end{equation*}
which has no positive integer roots. Therefore the inverse
\begin{equation}
\label{invmatrix}
\left(m I - \frac{\partial P}{\partial u}\right)^{-1} = \begin{bmatrix}
 \frac{1}{m} & -\frac{n (n+1)}{a_0^4 m (m+1)} & -\frac{n (n+1)}{a_0^2 m (m+1)} & -\frac{2
   n}{m^2+m} \\
 0 & \frac{1}{m} & 0 & 0 \\
 0 & 0 & \frac{1}{m} & 0 \\
 0 & -\frac{n+1}{2 a_0^4 m (m+1)} & -\frac{n+1}{2 a_0^2 m (m+1)} & \frac{1}{m+1} \\
 \end{bmatrix}
\end{equation}
exists for $m\in \N$. Furthermore we can find a $B\in \R$ such that 
\begin{equation*}
\norm{\left(m I - \frac{\partial P}{\partial u}\right)^{-1}} < B
\end{equation*}
for all $m\in\N$. Therefore we can apply Theorem \ref{BBgen} proving the desired result. 
\end{proof}

\end{document}